\documentclass[10pt,electronic]{amsart}        

\usepackage[swedish, english]{babel}
\usepackage[T1]{fontenc} 
\usepackage{accents}

\usepackage{hyperref}
\usepackage[noabbrev,capitalize]{cleveref}
\hypersetup{
	colorlinks,
	linkcolor={red!50!black},
	citecolor={blue!50!black},
	urlcolor={blue!80!black}
}
\parskip 8pt
\parindent 0pt
\usepackage{adjustbox}
\usepackage{url}
\usepackage[hmargin=3.7cm,vmargin=3cm]{geometry} 
\usepackage{graphicx}                         
\usepackage{amsmath}                     
\usepackage{amssymb} 
\usepackage{mathrsfs}
\usepackage{stmaryrd}
\usepackage{mathtools}
\usepackage[shortlabels]{enumitem}
\usepackage{tikz-cd}
\usetikzlibrary{babel}
\usepackage{euscript}
\renewcommand{\mathcal}{\EuScript}
\theoremstyle{plain}                                              \makeatletter
\def\swappedhead#1#2#3{%
	\thmnumber{\@upn{\the\thm@headfont#2\@ifnotempty{#1}{.~}}}%
	\thmname{#1}%
	\thmnote{ {\the\thm@notefont(#3)}}}
\makeatother
\newtheorem{thm}{Theorem}[section]

\newtheorem{lem}[thm]{Lemma}
\newtheorem{prop}[thm]{Proposition}
\newtheorem{cor}[thm]{Corollary}
\newtheorem{conj}[thm]{Conjecture}
\theoremstyle{definition}

\newtheorem{rem}[thm]{Remark}
\newtheorem{defn}[thm]{Definition}

\title[Top weight cohomology and handlebodies]{Top weight cohomology of moduli spaces of Riemann surfaces and handlebodies}


\newcommand{\M}{\mathcal{M}}
\newcommand{\HM}{\mathcal{HM}}
\newcommand{\Mbar}{\overline{\M}}
\newcommand{\trop}{\mathrm{trop}}
\newcommand{\R}{\mathbf{R}}
\newcommand{\Z}{\mathbf{Z}}
\newcommand{\C}{\mathbf C}
\newcommand{\Q}{\mathbf Q}
\newcommand{\N}{\mathbf N}
\newcommand{\CV}{\mathcal{CV}}

\newcommand{\Cut}{\operatorname{Cut}}
\newcommand{\Contr}{\operatorname{Contr}}
\newcommand{\DualGraph}{\Gamma}
\newcommand{\corollas}{\gamma}
\newcommand{\geodesic}{\gamma}

\newcommand{\Oper}{\mathcal{O}}

\DeclareMathOperator*{\colim}{colim}
\DeclareMathOperator*{\hocolim}{hocolim}

\newcommand{\Mod}{\mathrm{Mod}}
\newcommand{\HMod}{\mathrm{HMod}}
\newcommand{\Diff}{\mathrm{Diff}}
\DeclareMathOperator{\vcd}{vcd}
\hyphenation{Teich-mül-ler}

\begin{document}

\author{Louis Hainaut}
\author{Dan Petersen}

 \maketitle

\begin{abstract}
    We show that a certain locus inside the moduli space $\M_g$ of hyperbolic surfaces, given by surfaces with ``sufficiently many'' short geodesics, is a classifying space of the handlebody mapping class group. A consequence of the construction is that the top weight cohomology of $\M_g$, studied by Chan--Galatius--Payne, maps injectively into the cohomology of the handlebody mapping class group. 
\end{abstract}

\section{Introduction}

\subsection{The handlebody group and the mapping class group}
\label{subsec: first}
Let $V_{g,n}^m$ be a genus $g$ handlebody, with $m$ distinct ordered marked disks on its boundary and $n$ distinct ordered marked points {also on its boundary}. We let $\Sigma_{g,n}^m = \partial V_{g,n}^m$, a genus $g$ surface with $m$ marked disks and $n$ marked points. We write  $$\Mod_{g,n}^m := \pi_0 \Diff (\Sigma_{g,n}^m)$$
for the \emph{mapping class group} of the surface; here, $\Diff (\Sigma_{g,n}^m)$ denotes the topological group of self-diffeomorphisms of the surface fixing the marked points and disks pointwise, and if $m=0$ we must in addition insist that the diffeomorphism be orientation-preserving. We let similarly
$$\HMod_{g,n}^m := \pi_0 \Diff (V_{g,n}^m)$$
be the \emph{handlebody group}, where similarly $\Diff(V_{g,n}^m)$ denotes self-diffeomorphisms of the surface fixing the marked points and disks pointwise. When $m$ or $n$ vanishes, we may omit it from the notation. For many purposes there is no large difference between considering marked disks or marked points, as the two types of groups are related by the short exact sequence
$$ 0 \to \Z \to \HMod_{g,n}^{m+1} \to \HMod_{g,n+1}^m \to 1, $$
and similarly for the usual mapping class group. 

Any self-diffeomorphism of $V_{g,n}^m$ restricts to a self-diffeomorphism of $\partial V_{g,n}^m=\Sigma_{g,n}^m$. Restriction defines a map $\Diff (V_{g,n}^m) \to \Diff (\Sigma_{g,n}^m)$, and hence 
$$ \HMod_{g,n}^m \to \Mod_{g,n}^m.$$
This map turns out to be \emph{injective}, and one speaks of the \emph{handlebody subgroup} of the mapping class group. This subgroup is well-defined up to conjugation, and for $g=0$ this map is an isomorphism. 

The main theorem of this paper, \cref{mainthm}, is a construction of a classifying space of the handlebody group $\HMod_{g,n}^m$ in terms of Teichm\"uller theory and hyperbolic geometry. But before stating the theorem, we will explain how we were led to the construction, and some consequences.

The homology of the mapping class group has been intensely studied for a long time. The handlebody group is less well studied, but there are many interesting parallels (and differences) between the homologies of the two families of groups. We summarize a few of these in the following table:

\begin{table}[h]
\begin{tabular}{l|l|l}
 & $\Mod_{g,n}^m$ &  $\HMod_{g,n}^m$  \\ \hline
$\vcd$  & $4g-4+m+2n$  &  $4g-4+m+2n$  \\
 $\chi^{\mathrm{orb}}$& $\zeta(1-2g)$ & 0   \\
 homological stability & yes, slope $\tfrac 2 3$ & yes, slope $\tfrac 1 2$  \\
 stable rational cohomology & $\Q[\kappa_1,\kappa_2,\kappa_3,\ldots]$ & $\Q[\kappa_2,\kappa_4,\kappa_6,\ldots]$ \\ 
 infinite loop space & $\Omega^\infty_0 \mathrm{MTSO}(2)$ & $\Omega^\infty_0 \Sigma^\infty_+ B \mathrm{SO}(3)$
\end{tabular}
\end{table}

Let us briefly elaborate on each of these entries in turn:\begin{enumerate}
    \item Harer \cite{harer-vcd} has shown that for $2g-2+m+n>0$ one has $\vcd \Mod_{g,n}^m = 4g-4+m+2n - \delta_{m+n,0} + \delta_{g,0}$, where $\delta$ is a Kronecker delta.  Hirose \cite{hirose-vcd} showed that $\vcd \HMod_g = 4g-5$ for $g\geq 2$, and in fact the virtual cohomological dimensions agree in all cases\footnote{Indeed, the case of $g \geq 2$ and $r,s>0$ follows inductively from this and the Birman exact sequence, cf.~\cite[Theorem 5.5]{bieri}. When $g=0$ the map $ \HMod_{0,m}^n \to \Mod_{0,m}^n$ is an isomorphism. When $g=1$ one has $\HMod_{1,1} \cong \{\pm 1 \} \ltimes \Z$, so $\vcd \HMod_{1,1} = \vcd \Mod_{1,1} = 1$, and again the general result follows from this and the Birman exact sequence. }.
    \item Harer and Zagier \cite{harer-zagier} showed that $\chi^{\mathrm{orb}}(\Mod_{g,1})= \zeta(1-2g)$ for $g \geq 1$. The orbifold Euler characteristics for other values of $m$ and $n$ follow easily from this. On the other hand, Hirose \cite{hirose-vcd} showed that $\chi^{\mathrm{orb}}(\HMod_g)=0$. 
    \item Harer \cite{harer-stability} has proved that $H_i(\Mod_{g}^1,\Z)\to H_{i}(\Mod_{g+1}^1,\Z)$ is an isomorphism for $i \leq \tfrac{2g}{3}$. The range given here is due to Boldsen \cite{boldsen}. The analogous homological stability for handlebody groups is due to Hatcher--Wahl \cite{hatcher-wahl}. The stable range for the mapping class group is known to be optimal, but this is not known for the handlebody group. 
\item The rational cohomology of $\Mod_g^1$ in the range covered by Harer's stability theorem was proved by Madsen--Weiss \cite{madsen-weiss} to be a polynomial algebra on the $\kappa$-classes, confirming the so-called \emph{Mumford conjecture. } Hatcher \cite{hatcher} has outlined a proof that the stable cohomology of the handlebody group is freely generated by the \emph{even} $\kappa$-classes, but a detailed argument is not yet in the literature. Shaul Barkan and Jan Steinebrunner (pers.\ comm.) have an alternative argument, which will appear in forthcoming work and is based on Giansiracusa's theorem \cite{giansiracusa-handlebodies}. 
\item The actual proof of the Mumford conjecture, as well as its analogue for handlebodies, proceeds by identifying the homotopy type of the classifying space of the stable mapping class group (handlebody group) with an explicit infinite loop space, up to plus-construction, as indicated in the table.
\end{enumerate} 


Let us also mention that both $\Diff(\Sigma_{g,n}^m)$ and $\Diff(V_{g,n}^m)$ have contractible connected components, with a few low-genus exceptions. In the surface case this goes back to Earle--Eells \cite{ee}. In the case of handlebodies, isotopy extension shows that $\Diff(V_{g,n}^m)\to \Diff(\Sigma_{g,n}^m)$ is a fibration, and by \cite{hatcher2} the fibers are empty or contractible. (In particular, $\HMod_{g,n}^m\to\Mod_{g,n}^m$ is indeed injective, as stated earlier.) 

We refer to \cite{hensel} for a useful survey of the handlebody group.

\subsection{Top weight cohomology}\label{subsec: top weight}
The Mumford conjecture implies that the stable cohomology of the mapping class group grows like the number-theoretic partition function as a function of $g$, in particular sub-exponentially. On the other hand, Harer--Zagier \cite{harer-zagier} leveraged their calculation of the orbifold Euler characteristic to obtain a formula also for the actual integer-valued Euler characteristic, and they showed that the two are asymptotically equal. Since the Bernoulli numbers grow super-exponentially, it follows that the vast majority of the rational homology of the mapping class group lives in the unstable range and is unaccounted for by the Mumford conjecture. 

For a long time this was a mildly unsatisfactory state of affairs: it was known that most homology would be unstable as $g \gg 0$, but the only explicitly constructed unstable homology classes were isolated examples in low genus. This recently changed with work of Chan--Galatius--Payne \cite{changalatiuspayne}, who studied the \emph{top weight cohomology of the moduli space of curves}. 

Let $\M_{g,n}$ denote the moduli orbifold of $n$-pointed Riemann surfaces. As an orbifold, one has a homotopy equivalence $\M_{g,n} \simeq B\Mod_{g,n}$, and $H_\bullet(\M_{g,n},\Z)\cong H_\bullet(\Mod_{g,n},\Z).$ Being a complex orbifold of dimension $3g-3+n$, it satisfies Poincar\'e duality with $\Q$-coefficients:
$$ H_k(\M_{g,n},\Q) \cong H^{6g-6+2n-k}_c(\M_{g,n},\Q).$$
The space $\M_{g,n}$ is not just a complex orbifold, but it is the analytification of an algebraic Deligne--Mumford stack. This implies in particular that its rational cohomology carries a natural \emph{mixed Hodge structure}, after Deligne. By the ``top weight cohomology'' we mean the weight zero\footnote{The terminology ``top weight'' may seem like a misnomer, since weight zero is the \emph{lowest} possible weight. The mismatch occurs because Chan--Galatius--Payne consider usual cohomology.} part of the mixed Hodge structure on compactly supported cohomology, $W_0 H^\bullet_c(\M_{g,n},\Q)$.

If $U$ is a smooth variety, then the top weight cohomology of $U$ can be computed by making explicit Deligne's construction of the mixed Hodge structure on smooth varieties \cite{hodge2}. Firstly, choose a compactification $U\subset \overline U$ such that $\overline U \setminus U$ is a simple normal crossing divisor, and consider the divisor as defining a stratification of $\overline U$. Secondly, write down the cochain complex $D(U,\overline U)$ which in degree $p$ is the $\Q$-vector space with a basis indexed by the set of strata of codimension $p$, with an evident differential recording adjacencies of boundary strata. The cohomology of $D(U,\overline U)$ is independent of the choice of $\overline U$, and coincides with $W_0 H^\bullet_c(U,\Q)$. The top weight cohomology is in a sense the most combinatorial part of the cohomology; in the example just explained, the top weight part simply records the combinatorics of how the boundary divisors in a compactification intersect. 

In the case of $\M_{g,n}$, there is a god-given choice of normal crossing divisor compactification, namely the \emph{Deligne--Mumford compactification} $\Mbar_{g,n}$. The concrete description of the top weight cohomology given in the preceding paragraph carries over also in this case, although with some additional subtleties when $g>0$ due to the fact that the boundary is not a \emph{simple} normal crossing divisor. The boundary strata of $\Mbar_{g,n}$ index the possible topological types of an $n$-pointed stable curve of genus $g$, and are indexed by \emph{stable graphs}, as in \cref{fig: degen1} and \cref{fig: degen2}.
\begin{figure}[ht]
    \centering
\includegraphics[width=12cm]{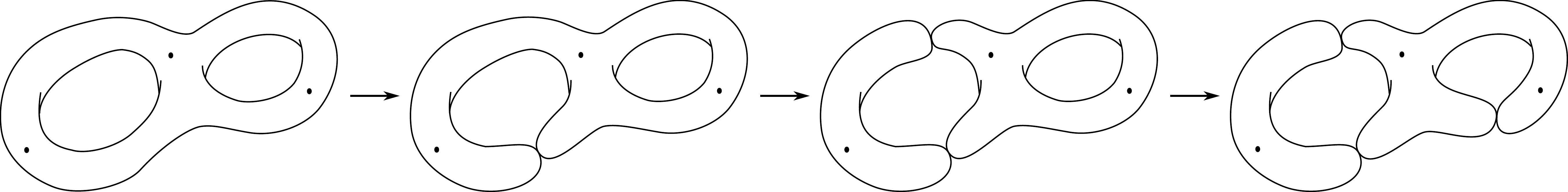}
    \caption{A sequence of degenerations of stable curves inside $\Mbar_{2,3}$.}
    \label{fig: degen1}
\end{figure}

\begin{figure}[ht]
    \centering
\includegraphics[width=10cm]{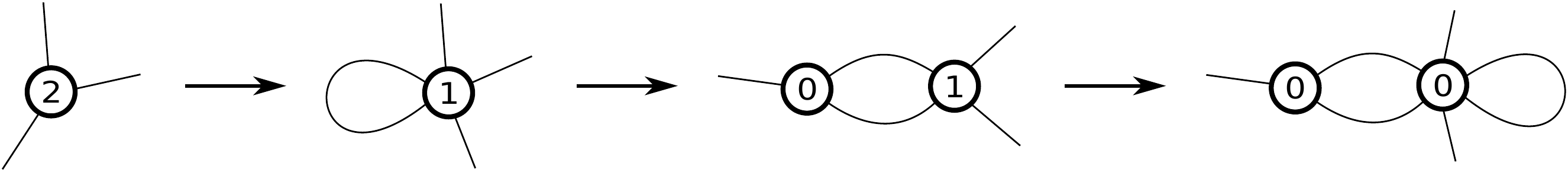}
    \caption{The corresponding sequence of stable graphs.}
    \label{fig: degen2}
\end{figure}

Two stable graphs define adjacent strata precisely when one can be obtained from the other by contracting a single edge, as in the figure. It follows that the cochain complex $D(\M_{g,n},\Mbar_{g,n})$ is an instance of a \emph{graph complex}: it is a cochain complex spanned by isomorphism classes of graphs of a certain type, and a differential given by edge expansions, with signs added to ensure $d^2=0$. In fact, $D(\M_{g,n},\Mbar_{g,n})$ is a close relative of the \emph{hairy graph complex} $\mathsf{HGC}_0$, going back to Kontsevich. The complex $\mathsf{HGC}_0$ splits into summands $\mathsf{HGC}_0^{(g,n)}$ indexed by \emph{loop order} $g$ and the number of \emph{hairs} $n$, and $\mathsf{HGC}_0^{(g,n)}$ is naturally a subcomplex of $D(\M_{g,n},\Mbar_{g,n})$ --- the graphs which span $D(\M_{g,n},\Mbar_{g,n})$ have genus decorations at the vertices, and $\mathsf{HGC}_0^{(g,n)}$ is the subcomplex spanned by graphs with all vertices of genus zero. The theorem of Chan--Galatius--Payne is that in fact $\mathsf{HGC}_0^{(g,n)}\hookrightarrow D(\M_{g,n},\Mbar_{g,n})$ is (nearly) always a quasi-isomorphism.  

\begin{thm}[Chan--Galatius--Payne]The map 
$H^\bullet(\mathsf{HGC}_0^{(g,n)}) \to W_0 H^\bullet_c(\M_{g,n},\Q)$ is an isomorphism for all $(g,n) \neq (1,1)$. 
\end{thm}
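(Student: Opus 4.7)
The plan is to deduce the theorem from a topological statement about the \emph{tropical moduli space} $\Delta_{g,n}$: namely, that a distinguished subspace $\Delta_{g,n}^{(0)} \subseteq \Delta_{g,n}$ is a deformation retract.

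\textbf{Step 1: reduction to tropical geometry.} The space $\Delta_{g,n}$ is a symmetric $\Delta$-complex (in the sense of Chan) whose cells are indexed by isomorphism classes of stable graphs of genus $g$ with $n$ legs, the cell of $\Gamma$ having dimension $|E(\Gamma)|-1$. A standard application of Deligne's construction of the weight filtration to the Deligne--Mumford compactification yields a canonical isomorphism
\[ \tilde H^{k-1}(\Delta_{g,n},\Q) \cong W_0 H^k_c(\M_{g,n},\Q), \]
and the reduced cellular cochains of $\Delta_{g,n}$ recover the complex $D(\M_{g,n},\Mbar_{g,n})$ up to shift. Under this identification, $\mathsf{HGC}_0^{(g,n)}$ corresponds to the cellular cochains of the sub-$\Delta$-complex $\Delta_{g,n}^{(0)} \subseteq \Delta_{g,n}$ spanned by tropical curves whose underlying graphs have all vertex genera zero.

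\textbf{Step 2: deformation retraction.} The theorem reduces to showing that $\Delta_{g,n}^{(0)} \hookrightarrow \Delta_{g,n}$ is a homotopy equivalence. I would construct an explicit deformation retraction as follows: given a cell indexed by a stable graph $\Gamma$ with a vertex $v$ of positive genus $g_v$, continuously ``explode'' $v$ into a genus-zero vertex with $g_v$ new loops attached, producing an enlarged graph $\Gamma'$. The cell $C_\Gamma$ sits as a face of $C_{\Gamma'} \subseteq \Delta_{g,n}^{(0)}$ (the face where the new loop-lengths vanish), and one retracts $C_\Gamma$ into the interior of $C_{\Gamma'}$ along a straight-line path parameterized by the time variable of the homotopy. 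Performing this vertex-by-vertex collapses all positive-genus contributions into $\Delta_{g,n}^{(0)}$.

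\textbf{Obstacle.} The principal technical difficulty is making this construction globally consistent: the explosion must respect face identifications in $\Delta_{g,n}$, commute with automorphisms of stable graphs, and assemble into a single continuous map. The cleanest organization is likely a filtration
\[ \Delta_{g,n}^{(0)} = \Delta_{g,n}^{(\leq 0)} \subseteq \Delta_{g,n}^{(\leq 1)} \subseteq \Delta_{g,n}^{(\leq 2)} \subseteq \cdots \subseteq \Delta_{g,n} \]
by the number of positive-genus vertices, and showing inductively that each inclusion is a homotopy equivalence via the collapse just described; the modular-operadic structure of $\{\Mbar_{g',n'}\}$ should provide the compatibility needed to compare these collapses across cells. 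The exceptional case $(g,n)=(1,1)$ must be excluded at the outset: there $\Delta_{1,1}$ is a single point (the unique stable graph being a genus-zero vertex with one loop and one leg), so its reduced cohomology vanishes in agreement with $W_0 H^\bullet_c(\M_{1,1},\Q)=0$, whereas $\mathsf{HGC}_0^{(1,1)}$ is one-dimensional in degree $1$ with zero differential, so the comparison map fails to be an isomorphism for this pair.
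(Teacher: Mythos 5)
The paper does not prove this theorem; it is quoted from Chan--Galatius--Payne. What Section~4 of this paper proves is the related (but weaker) statement that $H^\bullet_c(\M_{g,n}^\trop,\Q)\to W_0H^\bullet_c(\M_{g,n},\Q)$ is an isomorphism, via Deligne's weight spectral sequence. The content specific to the quoted theorem --- that $\mathsf{HGC}_0^{(g,n)}\hookrightarrow D(\M_{g,n},\Mbar_{g,n})$ is a quasi-isomorphism --- is taken from \cite{changalatiuspayne} without argument.

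Your Step~1 contains an error that undermines the rest of the plan. The subset $\Delta_{g,n}^{(0)}$ of cells whose graphs have all vertex genera zero is \emph{not} a sub-$\Delta$-complex: contracting a loop increases the genus of its vertex, so the closure of a genus-zero cell contains cells with positive-genus vertices. Consistently with this, $\mathsf{HGC}_0^{(g,n)}$ is a \emph{subcomplex} of the cochain complex $D$, whereas cellular cochains of a genuine sub-$\Delta$-complex would be a \emph{quotient} of $C^\bullet(\Delta_{g,n})$. The correct identification is that $\mathsf{HGC}_0^{(g,n)}$ is the relative cochain complex $C^\bullet(\Delta_{g,n},\Delta_{g,n}^{\mathrm{w}})$ (up to shift), where $\Delta_{g,n}^{\mathrm{w}}\subseteq\Delta_{g,n}$ is the \emph{closed} subcomplex of graphs with at least one vertex of positive genus; equivalently, $\mathsf{HGC}_0^{(g,n)}\cong C^\bullet_c(\CV_{g,n},\Q)$, the compactly supported cochains of the \emph{open} complement. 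In particular, your Step~2 aims at the wrong target: a deformation retraction of $\Delta_{g,n}$ onto the non-closed set $\Delta_{g,n}^{(0)}$ is not a well-posed notion, and even granting it, it would establish an isomorphism on ordinary cohomology of $\Delta_{g,n}^{(0)}$, not on its compactly supported cohomology, which is what the theorem is about. By the long exact sequence of the pair $(\Delta_{g,n},\Delta_{g,n}^{\mathrm{w}})$, what one actually needs is that $\Delta_{g,n}^{\mathrm{w}}$ has vanishing reduced cohomology --- and this is precisely what Chan--Galatius--Payne prove, showing that $\Delta_{g,n}^{\mathrm{w}}$ (in fact the larger subcomplex also including loop- and bridge-graphs, and repeated markings) is contractible, using a sequence of explicit deformation retractions and a result of Vogtmann on the connectivity of the complex of graphs without separating edges. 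Your ``explosion'' homotopy goes in the wrong direction, pushing cells \emph{off} $\Delta_{g,n}^{\mathrm{w}}$ rather than collapsing $\Delta_{g,n}^{\mathrm{w}}$ to a point; moreover, the choice of explosion is not canonical (a genus-$g_v$ vertex can be replaced by many genus-zero graphs of first Betti number $g_v$, not just by a wedge of loops), so it would be hard to make even the ill-directed map globally continuous and automorphism-equivariant. Your remark on the exceptional case $(g,n)=(1,1)$ is correct, and it is also consistent with the corrected picture: there $\Delta_{1,1}^{\mathrm{w}}$ is empty, so it cannot be contractible.
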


Graph complexes like $\mathsf{HGC}$ and its many relatives are ubiquitous in  low-dimensional and high-dimensional topology, moduli theory, deformation quantization, and many other areas. We will not attempt to survey this sprawling subject, but mention \cite{willwachericm} as a jumping-in point. 

Willwacher \cite{willwacher} calculated $H^0(\mathsf{GC}_2)$, and showed that it can be identified with the \emph{Grothen\-dieck--Teich\-m\"uller Lie algebra} $\mathfrak{grt}_1$, introduced by Drinfeld \cite{drinfeld}. Here $\mathsf{GC}$ denotes the summand of $\mathsf{HGC}$ of graphs without hairs, and the subscript $2$ denotes a shift in degrees.\footnote{More generally one often considers a whole family of complexes $\mathsf{HGC}_{m,n}$ for $m,n\in\Z$; up to degree shifts their cohomologies depend only on the parity of $m$ and $n$. But we also point out that these complexes are conventionally defined by symmetrizing or antisymmetrizing with respect to permutations of the set of legs, which we do not want to do.} Combining Willwacher's theorem and the preceding corollary, and keeping track of the shift involved, one deduces the following. 

\begin{cor}\label{corollary grt}
There is an isomorphism $\bigoplus_{g \geq 2} W_0 H^{2g}_c(\M_g,\Q) \cong \mathfrak{grt}_1$.
\end{cor}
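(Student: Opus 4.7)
The plan is to combine the theorem of Chan--Galatius--Payne, applied in the case $n = 0$, with Willwacher's theorem $H^0(\mathsf{GC}_2) \cong \mathfrak{grt}_1$, tracking the ``shift involved'' mentioned in the statement through a routine degree computation.

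For every $g \geq 2$ the pair $(g,0)$ is not $(1,1)$, so CGP yields $H^\bullet(\mathsf{HGC}_0^{(g,0)}) \cong W_0 H^\bullet_c(\M_g,\Q)$, where a graph with $E$ edges contributes in cohomological degree $E$ (the codimension of the corresponding stratum of $\Mbar_g$). To connect to Willwacher's complex I would identify $\mathsf{HGC}_0^{(g,0)}$ with the loop-order-$g$ summand $\mathsf{GC}_2^{(g)}$ of $\mathsf{GC}_2$ up to a shift. In Willwacher's cohomological conventions, a graph $\gamma$ sits in degree $2V(\gamma) - E(\gamma) - 2$; for a connected graph of loop order $g$, using $V = E - g + 1$, this equals $E - 2g$. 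Hence CGP degree $E = 2g$ matches $\mathsf{GC}_2$ degree $0$, producing
\[ W_0 H^{2g}_c(\M_g,\Q) \cong H^0\bigl(\mathsf{GC}_2^{(g)}\bigr). \]
Summing over $g \geq 2$ and applying Willwacher's theorem yields the isomorphism.

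The main obstacle is making the comparison of $\mathsf{HGC}_0^{(g,0)}$ with $\mathsf{GC}_2^{(g)}$ precise as cochain complexes, not merely as graded vector spaces; this requires pinning down sign conventions and confirming that both sides use the same prescription for graphs with automorphisms (and in particular for graphs with self-loops or multiple edges). Because the $n = 0$ case has no hairs, the symmetrization/antisymmetrization subtlety flagged in the footnote does not arise, which simplifies matters considerably. A useful sanity check is that the trivalence condition forces $E \leq 3g - 3$ in $\mathsf{HGC}_0^{(g,0)}$, so $W_0 H^{2g}_c(\M_g,\Q)$ can only be nonzero once $g \geq 3$; this is consistent with $\mathfrak{grt}_1$ having first generator $\sigma_3$ in loop order $3$, represented by the tetrahedron ($V = 4$, $E = 6$).
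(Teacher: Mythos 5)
Your proposal is correct and follows the same route as the paper, which derives the corollary by combining Willwacher's theorem with Chan--Galatius--Payne and ``keeping track of the shift involved.'' Your explicit bookkeeping --- a graph with $E$ edges sits in CGP degree $E$ and in $\mathsf{GC}_2$ degree $2V-E-2 = E-2g$ via $V = E-g+1$, so CGP degree $2g$ matches $\mathsf{GC}_2$ degree $0$ --- makes precise exactly the degree shift the paper leaves implicit.
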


There is a map $\mathrm{FreeLie}(\sigma_3,\sigma_5,\sigma_7,\ldots)\to \mathfrak{grt}_1 $ which is conjectured to be an isomorphism \cite[p.~859]{drinfeld}. Under the isomorphism of \cref{corollary grt}, the generator $\sigma_p$ goes to a class in genus $p$, and the Lie bracket is compatible with the grading by genus. It follows from a theorem of Brown \cite{brown} that $\mathrm{FreeLie}(\sigma_3,\sigma_5,\sigma_7,\ldots)\to \mathfrak{grt}_1 $ is injective. From this fact, and an estimate for the dimensions of the graded pieces of the free Lie algebra, Chan--Galatius--Payne deduce:

\begin{cor}The Betti numbers $\dim H_{4g-6}(\Mod_g,\Q)$ are nonzero for $g=3,5$ and $g \geq 7$, and grow at least exponentially in $g$.
\end{cor}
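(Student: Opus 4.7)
The plan is to chain Poincaré duality, the weight filtration, \cref{corollary grt}, and Brown's injectivity theorem. Since $\M_g$ is a complex orbifold of dimension $3g - 3$ and $H_\bullet(\Mod_g, \Q) \cong H_\bullet(\M_g, \Q)$, rational Poincaré duality gives
$$H_{4g-6}(\Mod_g, \Q) \cong H^{2g}_c(\M_g, \Q).$$
Since $W_0$ sits as a subspace of the ambient cohomology we have $\dim H^{2g}_c(\M_g, \Q) \geq \dim W_0 H^{2g}_c(\M_g, \Q)$, and by \cref{corollary grt} the right-hand side equals the dimension of the genus-$g$ graded piece of $\mathfrak{grt}_1$. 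By Brown's injectivity theorem, this in turn is bounded below by the degree-$g$ dimension of $\mathrm{FreeLie}(\sigma_3, \sigma_5, \sigma_7, \ldots)$, with $\sigma_p$ placed in degree $p$; it therefore suffices to estimate the graded pieces of this free Lie algebra from below.

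For the nonvanishing claims, I would exhibit explicit nonzero elements. If $g$ is odd and $g \geq 3$, the generator $\sigma_g$ itself is a nonzero element in degree $g$. If $g$ is even and $g \geq 8$, then $g - 3 \geq 5$ is odd, and $[\sigma_3, \sigma_{g-3}]$ is a nonzero element of the free Lie algebra (the two generators involved being distinct). Together these cases cover exactly $\{3, 5\} \cup \{g : g \geq 7\}$; the omitted values $g \in \{4, 6\}$ are inaccessible to this argument, since no generator lies in an even degree and $[\sigma_3, \sigma_3] = 0$.

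For the exponential growth, I would pass to the sub-Lie-algebra $\mathrm{FreeLie}(\sigma_3, \sigma_5)$ and apply Poincaré--Birkhoff--Witt, which expresses the dimensions $\ell_n$ of its graded pieces via
$$\frac{1}{1 - t^3 - t^5} = \prod_{n \geq 1} \frac{1}{(1 - t^n)^{\ell_n}}.$$
The smallest positive root $\rho$ of $1 - t^3 - t^5$ satisfies $\rho < 1$, and a standard Witt-formula estimate extracts from this an exponential lower bound of the form $\ell_g \geq C \rho^{-g}/g$ for some $C > 0$ and all sufficiently large $g$ in the numerical semigroup $\langle 3, 5 \rangle$, which is cofinite in $\N$. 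This generating-function bookkeeping is the only real computation in the argument; no individual step is difficult, and the main content of the corollary lies entirely in the deep inputs (Poincaré duality for $\M_g$, \cref{corollary grt}, and Brown's theorem) that are being chained together.
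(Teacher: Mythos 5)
Your argument is correct and follows exactly the route the paper sketches (Poincar\'e duality, \cref{corollary grt}, Brown's injectivity theorem, and a Witt-formula estimate for a free Lie algebra). The one small deviation is that you pass to the sub-Lie-algebra $\mathrm{FreeLie}(\sigma_3,\sigma_5)$, so you get the exponential base from $1-t^3-t^5$ rather than from $1-t^3-t^5-t^7-\cdots = \frac{1-t^2-t^3}{1-t^2}$; this gives a strictly weaker constant than the $\beta=1.3247\ldots$ (real root of $t^3-t-1$) that the paper records in the handlebody corollary that follows, but since the statement at hand asserts only ``at least exponential'' growth this is a harmless simplification, and your separate treatment of nonvanishing via $\sigma_g$ and $[\sigma_3,\sigma_{g-3}]$ correctly covers $g=7$, which lies outside the numerical semigroup $\langle 3,5\rangle$.
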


Note that $\vcd \Mod_g = 4g-5$. It is known that $H_{4g-5}(\Mod_g,\Q) =0$ for all $g$; this result was announced by Harer, but a proof did not appear until \cite{cfp,mss}. Hence the construction produces nontrivial ``maximally unstable'' rational homology classes (i.e.\ classes in the highest degree they could possibly appear), and lots of them. Combining the aforementioned results on weight $0$ with subsequent work on weight $2$ \cite{paynewillwacher} and weight $11$ \cite{pw11} (which is the smallest nonzero odd weight \cite{bfp}), there are now $\approx 50$ different values of $k$ for which we know that $\dim H^{2g+k}_c(\M_g,\Q)$ grows at least exponentially with $g$ \cite[Corollary 1.2]{pw11}.

\subsection{Moduli spaces of tropical curves and handlebodies}
\label{subsec: moduli of tropical curves}

The work of Chan--Galatius--Payne can also be understood in terms of \emph{moduli spaces of tropical curves} \cite{bmv,acp,cap}. We define a \emph{tropical curve} of genus $g$ with $n$ markings to be a stable graph of genus $g$ with $n$ external legs, exactly as those in \cref{fig: degen2}, together with a positive real number attached to each internal edge, which we call the \emph{length} of the edge. The moduli space $\M_{g,n}^\trop$ of such tropical curves is an orbi-cone complex: for a given fixed stable graph $\DualGraph$ with $p$ internal edges, the subspace of $\M_{g,n}^\trop$ of tropical curves with underlying graph $\DualGraph$ is the quotient orbifold $[(\R_{>0})^p / \mathrm{Aut}(\DualGraph)]$; these orbi-cells are glued together in such a way that letting an edge length go to zero corresponds to contracting the corresponding edge. Thus $p$-dimensional cells of $\M_{g,n}^\trop$ are in bijection with codimension $p$ strata of $\Mbar_{g,n}$. 

The space $\M_{g,n}^\trop$ is contractible: letting edge lengths go to zero contracts everything down to the unique stable graph without edges. But it still has interesting topology --- in particular, its cohomology with compact support is highly nontrivial. Indeed, in the same way that usual cohomology can be computed from a CW decomposition, cohomology with compact support can be computed from a \emph{stratification} into open cells. Applying this to the stratification by stable graphs, one finds\footnote{We need $\Q$-coefficients here, since we only have an orbi-cell stratification, and integrally these orbicells may have very nontrivial (co)homology.} that the complex of cellular chains $C^\bullet_c(\M_{g,n}^\trop,\Q)$ is nothing but the complex $ D(\M_{g,n},\Mbar_{g,n})$ described in the previous subsection. In particular, $H^\bullet_c(\M_{g,n}^\trop,\Q) \cong W_0 H^\bullet_c(\M_{g,n},\Q)$.

We define $\CV_{g,n} \subset \M_{g,n}^\trop$, the \emph{Culler--Vogtmann moduli space of graphs}, to be the union of those orbicells corresponding to stable graphs with all vertex decorations $0$. Then $\CV_{g,n}$ is an open subspace of $\M_{g,n}^\trop$. These spaces were introduced in the study of automorphisms of free groups  \cite{cv}: one has $\CV_{g,1} \simeq B\mathrm{Aut}(F_g)$ and $\CV_{g,0} \simeq B\mathrm{Out}(F_g)$. Reasoning as in the previous paragraph, one sees that $C^\bullet_c(\CV_{g,n},\Q) \cong \mathsf{HGC}_0^{(g,n)}$. Moreover, the map from hairy graph cohomology to top weight cohomology of $\M_{g,n}$ can be identified with the natural map
$$ H^\bullet_c(\CV_{g,n},\Q) \to H^\bullet_c(\M_{g,n}^\trop,\Q)$$
obtained from $H^\bullet_c(-)$ being covariantly functorial for open immersions. 

In their paper, Chan--Galatius--Payne also give an alternative geometric interpretation of the map $$H^\bullet(\mathsf{HGC}_0^{(g,n)}) \to H^\bullet_c(\M_{g,n},\Q),$$ by means of a ``tropicalization'' morphism $\lambda_{g,n} : \M_{g,n}\to\M_{g,n}^\trop$. This interpretation will be an important ingredient in this paper, so let us try to explain it.

Here is their construction: interpret $\M_{g,n}$ as the space of hyperbolic surfaces of genus $g$ with $n$ cusps. Choose a real number $\varepsilon$ small enough that any two closed geodesics on such a surface of length $<\varepsilon$ are disjoint; we call such geodesics \emph{short}. For a hyperbolic surface $\Sigma \in \M_{g,n}$, consider the nodal surface obtained from $\Sigma$ by contracting each short geodesic to a point. This nodal surface defines a stable graph, whose internal edges are in bijection with the short geodesics on $\Sigma$, and we decorate each edge with the real number $-\log(\ell/\varepsilon)$, where $\ell$ is the length of the geodesic. The result is a point of $\M_{g,n}^\trop$, and we declare this point to be $\lambda_{g,n}(\Sigma).$ This turns out to be a continuous, proper map, and since $H^\bullet_c(-)$ is contravariantly functorial for proper morphisms one obtains a homomorphism
$$ H^\bullet_c(\M_{g,n}^\trop,\Q) \to H^\bullet_c(\M_{g,n},\Q),$$
which turns out to agree with the composition $$ H^\bullet_c(\M_{g,n}^\trop,\Q) \cong W_0 H^\bullet_c(\M_{g,n},\Q) \subset H^\bullet_c(\M_{g,n},\Q).$$

In summary, we have the following diagram of spaces:
\[\begin{tikzcd}
    & \M_{g,n} \arrow[d, two heads, "\text{proper}"]\\
    \CV_{g,n} \arrow[r,"\text{open}", hook] & \M_{g,n}^\trop
\end{tikzcd}\]
Applying the functor $H^\bullet_c(-)$, and using that it is covariant for open embeddings and contravariant for proper maps, defines the map from hairy graph cohomology to the compact support cohomology of $\M_{g,n}$. But there is now an obvious way to fill in the diagram, and obtain an alternative factorization of the map
$ H^\bullet_c(\CV_{g,n},\Q) \to H^\bullet_c(\M_{g,n},\Q)$. Let us define $\HM_{g,n} \subset \M_{g,n}$ to be the inverse image of $\CV_{g,n}$ under the tropicalization morphism $\lambda_{g,n}$:
\[\begin{tikzcd}
    \HM_{g,n} \arrow[d, two heads, "\text{proper}"]\arrow[r,"\text{open}", hook] & \M_{g,n} \arrow[d, two heads, "\text{proper}"]\\
    \CV_{g,n} \arrow[r,"\text{open}", hook] & \M_{g,n}^\trop
\end{tikzcd}\]
Explicitly, $\HM_{g,n}$ is the open subspace of $\M_{g,n}$ parametrizing hyperbolic surfaces with the following property: \emph{cutting the surface along the union of all its short geodesics decomposes the surface into pieces of genus zero}. More generally, let us define $\HM_{g,n}^m$ for $2g-2+n+m>0$ to be the space of hyperbolic surfaces of genus $g$ with $n$ cusps and $m$ parametrized geodesic boundary components of length $\varepsilon$, still with the property that cutting the surface along short geodesics decomposes it into pieces of genus zero. We can now state our main theorem. 

\begin{thm}\label{mainthm}
The space $\HM_{g,n}^m$ is a classifying space for the handlebody group $\HMod_{g,n}^m$. If $m=0$ this must be understood in the orbifold sense, but if $m>0$ then $\HM_{g,n}^m$ is an actual topological space.
\end{thm}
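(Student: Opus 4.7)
The strategy is to realize $\HM_{g,n}^m$ as a quotient of a contractible space by a properly discontinuous action of $\HMod_{g,n}^m$. Let $\mathcal{T} = \mathcal{T}_{g,n}^m$ be Teichm\"uller space parametrizing marked hyperbolic structures on $\Sigma_{g,n}^m$ (with cusps at the $n$ marked points and geodesic boundary of length $\varepsilon$ at the $m$ marked disks), so that $\mathcal{T}$ is contractible and $\M_{g,n}^m = [\mathcal{T}/\Mod_{g,n}^m]$. Write $\widetilde{\HM}\subset\mathcal{T}$ for the preimage of $\HM_{g,n}^m$. It suffices to show (i) that the connected components of $\widetilde{\HM}$ are in $\Mod_{g,n}^m$-equivariant bijection with the set of isotopy classes of handlebody fillings $V$ of $\Sigma_{g,n}^m$, with stabilizer of a fixed component equal to $\HMod_{g,n}^m$, and (ii) that each such component is contractible.

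For (i), a point $\Sigma\in\widetilde{\HM}$ determines, via its short geodesics pulled back through the marking, a disjoint system of simple closed curves on $\Sigma_{g,n}^m$ whose complement has only genus-zero components; such a generalized cut system canonically produces an isotopy class of handlebody $V$ with $\partial V=\Sigma_{g,n}^m$ by assembling a $3$-ball for each genus-zero piece and gluing along disks bounded by the short geodesics. When a geodesic length crosses the threshold $\varepsilon$, the short-geodesic set gains or loses a single curve which lies inside one existing genus-zero piece; since any simple closed curve on a genus-zero surface with boundary separates it into two genus-zero pieces, we remain in $\widetilde{\HM}$, and the isotopy class of $V$ (which does not see such a redundant meridian) is unchanged. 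Hence $\Sigma\mapsto[V]$ is locally constant on $\widetilde{\HM}$, so components correspond to handlebody classes, $\Mod_{g,n}^m$ acts transitively on these (any two handlebody fillings of the same boundary are related by a surface diffeomorphism), and the stabilizer of a fixed $V$ is, by the very definition of the handlebody subgroup, $\HMod_{g,n}^m$.

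For (ii), I would use the tropicalization morphism to fiber a component $U\subset\widetilde{\HM}$ over Culler--Vogtmann outer space, the contractible cover of $\CV_{g,n}$. A marked hyperbolic structure in $U$ yields, via its short-geodesic decomposition, a marked metric graph of genus $g$ with all vertex genera zero, defining an $\HMod_{g,n}^m$-equivariant map from $U$ to outer space. Over a marked graph $(\DualGraph,\vec L)$ the fiber ought to be identifiable with the product, over vertices $v$ of $\DualGraph$, of the Teichm\"uller space of hyperbolic structures on a $\mathrm{val}(v)$-holed sphere with boundary geodesic lengths prescribed by $\vec L$, times one $\R$-factor per edge of $\DualGraph$ recording the Fenchel--Nielsen twist. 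Each vertex factor is an open cell (Teichm\"uller space of a genus-zero surface with geodesic boundary of fixed length is contractible) and each twist factor is a line, so every fiber is contractible. Together with the contractibility of outer space, a fibration-type argument then yields contractibility of $U$.

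The chief technical obstacle will be making the fibration in (ii) rigorous. The map to outer space is naturally stratified by the combinatorial type of $(\DualGraph,\vec L)$, and across strata both the dimensions of the vertex Teichm\"uller factors and the number of twist parameters jump as edges collapse or emerge. A likely way around this is to bypass the fibration and directly construct a deformation retraction of $U$ onto a canonical subspace by working in Fenchel--Nielsen coordinates relative to a pants decomposition whose curves all bound disks in $V$; with respect to such coordinates, $U$ is cut out by explicit convex inequalities coming from the ``short geodesic'' condition, and one can contract to a distinguished basepoint along straight lines while staying in $\widetilde{\HM}$.
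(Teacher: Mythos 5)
Your proposal is a genuinely different route from the paper's. The paper does not attempt to show directly that any preimage in Teichm\"uller space is contractible; instead it takes Giansiracusa's theorem (\cref{thm: giansiracusa}) --- that the handlebody modular operad is the derived modular envelope of the framed little disk operad --- as input, expresses $B\HMod_g^m$ as a homotopy colimit of thick genus-zero Teichm\"uller spaces over a category of stable graphs, and then proves a cofibrancy statement (\cref{lem:cofibrant}) that allows replacing the homotopy colimit by an ordinary colimit, which it identifies with $\mathcal X_g^m$; the two deformation retractions in \cref{prop: two equivalences} finish the job. Your approach, by contrast, would bypass Giansiracusa's theorem entirely and argue elementarily in Teichm\"uller space, which would be an interesting alternative if it worked.

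However, there are two genuine gaps. First, in step (i) you establish that the assignment $\Sigma\mapsto[V]$ is locally constant and surjective onto isotopy classes of handlebody fillings, but you never argue \emph{injectivity} of $\pi_0(\widetilde{\HM})\to\{\text{handlebody fillings}\}$, i.e.\ that two points of $\widetilde{\HM}$ inducing the same filling lie in a single component. The "(i) and (ii) together" framing does not supply this: contractibility of each component does not preclude two components mapping to the same filling. What you actually need (and what would render (i) moot) is contractibility of each \emph{fiber} of $\Sigma\mapsto[V]$, which is precisely where the second gap lives.

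Second, in step (ii) the fibration over outer space is, as you acknowledge, not a fibration, and the proposed fix --- retracting along straight lines in Fenchel--Nielsen coordinates adapted to a $V$-compatible pants decomposition --- is not justified. The defining condition of $\widetilde{\HM}$ (``cutting along \emph{all} geodesics of length $<\varepsilon$ yields genus-zero pieces'') is governed by the lengths of \emph{every} simple closed geodesic, not only those in the chosen pants decomposition. Those other length functions are transcendental functions of the Fenchel--Nielsen coordinates, and there is no reason a straight-line retraction of the pants coordinates keeps them above $\varepsilon$, nor that the set of short geodesics changes only in ways that preserve the genus-zero condition along the whole retraction. In particular, near the ``frontier'' where a geodesic not in the pants system has length close to $\varepsilon$, the description of $\widetilde{\HM}$ by explicit convex inequalities in a single Fenchel--Nielsen chart simply breaks down. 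Some version of the Ji--Wolpert flow could plausibly be substituted here, but this would require a careful argument showing that the flow preserves the handlebody class, which you have not supplied. Until the contractibility of the fibers is established by some honest argument, the proposal does not constitute a proof.
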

 
We expect \cref{mainthm} to be broadly useful in the study of the handlebody group. Indeed, there is little doubt that Teichm\"uller theory has been a very powerful tool in the study of mapping class groups of surfaces, and one may hope that the model developed here can be similarly powerful in the study of handlebodies. In fact, no natural geometric model for $B\HMod_{g,n}^m$ has been known until now. But the immediate consequence of \cref{mainthm} is the fact that the map from the cohomology of the hairy graph complex to the homology of the mapping class group factors through the homology of the handlebody subgroup, using the composition
$$ H^\bullet(\mathsf{HGC}_0^{(g,n)}) \cong H^\bullet_c(\CV_{g,n},\Q) \to H^\bullet_c(\HM_{g,n},\Q) \to H^\bullet_c(\M_{g,n},\Q).$$Combined with Poincar\'e duality $H_k(\HM_{g,n},\Q) \cong H^{6g-6+2n-k}_c(\HM_{g,n},\Q)$ this immediately implies:

\begin{cor}
There is an injection $\mathfrak{grt}_1 \hookrightarrow \bigoplus_{g \geq 2} H_{4g-6}(\HMod_g,\Q)$. The composite $\mathrm{FreeLie}(\sigma_3,\sigma_5,\ldots) \hookrightarrow \bigoplus_{g \geq 2} H_{4g-6}(\HMod_g,\Q)$ takes the class $\sigma_p$ to a homology class in genus $p$, and the Lie bracket is compatible with the grading by genus. 
\end{cor}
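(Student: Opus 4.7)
The plan is to directly assemble the pieces already developed in the introduction, using \cref{mainthm} as the new ingredient.

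First, by \cref{mainthm} one has $H_\bullet(\HMod_g,\Q) \cong H_\bullet(\HM_g,\Q)$ in the orbifold sense. Since $\HM_g$ is an open suborbifold of the complex orbifold $\M_g$ of complex dimension $3g-3$, it inherits an orientation, and Poincar\'e duality with rational coefficients gives
\[H^{2g}_c(\HM_g,\Q) \;\cong\; H_{4g-6}(\HM_g,\Q) \;\cong\; H_{4g-6}(\HMod_g,\Q).\]
This identifies the target of the desired injection.

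Next I would use the pullback square constructed in \cref{subsec: moduli of tropical curves}, together with the functoriality of compactly supported cohomology (covariant for open immersions, contravariant for proper maps), to factor the Chan--Galatius--Payne map as
\[H^\bullet_c(\CV_g,\Q) \longrightarrow H^\bullet_c(\HM_g,\Q) \longrightarrow H^\bullet_c(\M_g,\Q).\]
By the theorem of Chan--Galatius--Payne, the outer composite is an isomorphism onto $W_0 H^\bullet_c(\M_g,\Q)$, and in particular is injective; hence the first map must already be injective. Specializing to degree $2g$, summing over $g\geq 2$, and combining with \cref{corollary grt} and the Poincar\'e duality identification above gives
\[\mathfrak{grt}_1 \;\cong\; \bigoplus_{g\geq 2} W_0H^{2g}_c(\M_g,\Q) \;\hookrightarrow\; \bigoplus_{g\geq 2} H^{2g}_c(\HM_g,\Q) \;\cong\; \bigoplus_{g\geq 2} H_{4g-6}(\HMod_g,\Q).\]

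For the second statement I would precompose with Brown's injection $\mathrm{FreeLie}(\sigma_3,\sigma_5,\ldots) \hookrightarrow \mathfrak{grt}_1$. That $\sigma_p$ lands in the genus-$p$ summand, and that the Lie bracket respects the genus grading, are inherited directly from the corresponding statement in $\mathfrak{grt}_1$ discussed just after \cref{corollary grt}: every arrow in the chain preserves the splitting by loop order / genus. No serious obstacle arises --- the nontrivial input is \cref{mainthm}, after which the corollary is essentially a two-out-of-three observation: since the CGP composite into $H^\bullet_c(\M_g,\Q)$ is injective, its first factor through $H^\bullet_c(\HM_g,\Q)$ must be as well.
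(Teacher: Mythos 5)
Your proposal is correct and follows exactly the line of reasoning the paper itself uses: the paper presents this corollary as an immediate consequence of the factorization $H^\bullet_c(\CV_g,\Q) \to H^\bullet_c(\HM_g,\Q) \to H^\bullet_c(\M_g,\Q)$ (coming from the cartesian square and base change for compactly supported cohomology), the injectivity of the composite onto $W_0 H^\bullet_c(\M_g,\Q)$ from Chan--Galatius--Payne, and the Poincar\'e duality identification $H^{2g}_c(\HM_g,\Q) \cong H_{4g-6}(\HMod_g,\Q)$ furnished by \cref{mainthm}. Your two-out-of-three observation and the handling of the genus grading via Brown's injection are precisely what the paper intends.
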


\begin{cor}
The group $H_{4g-6}(\HMod_g,\Q)$ is nonzero for $g=3$, $g=5$ and $g \geq 7$. The Betti numbers grow at least exponentially: more specifically, $$\liminf_{g \to\infty} \beta^{-g} \dim H_{4g-6}(\HMod_g,\Q) \geq 1,$$ where $\beta = 1.3247...$ is the real root of $t^3-t-1=0$.\end{cor}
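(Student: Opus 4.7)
The plan is to use the preceding corollary to reduce both statements to the corresponding assertions about the free Lie algebra $L := \mathrm{FreeLie}(\sigma_3,\sigma_5,\sigma_7,\ldots)$, where $\sigma_{2k+1}$ sits in weight $2k+1$. Indeed, that corollary produces an injection $L_g \hookrightarrow H_{4g-6}(\HMod_g,\Q)$ compatible with the grading by weight/genus, so it suffices to prove both claims with $\dim L_g$ in place of $\dim H_{4g-6}(\HMod_g,\Q)$.

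The nonvanishing is immediate from explicit elements. For $g$ odd and $\geq 3$, the generator $\sigma_g$ itself is a nonzero element of $L_g$. For $g$ even and $\geq 8$, the bracket $[\sigma_3,\sigma_{g-3}]$ is a nonzero element of $L_g$, since $g-3 \geq 5$ is odd and distinct from $3$. Together these account for all $g \in \{3,5\}\cup\{g \geq 7\}$. The two excluded cases genuinely fail: $4$ cannot be written as a sum of odd integers $\geq 3$, and in weight $6$ the only candidate $[\sigma_3,\sigma_3]$ vanishes by antisymmetry.

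For the exponential lower bound I would apply Witt's formula for the free Lie algebra with weighted generators. With $f(t) = \sum_{k\geq 1} t^{2k+1} = t^3/(1-t^2)$ denoting the generating series of the generators,
\[ \sum_{g\geq 1} \dim L_g \cdot t^g \;=\; \sum_{k\geq 1}\frac{\mu(k)}{k}\log\frac{1}{1-f(t^k)}. \]
The $k=1$ term equals $\log\bigl((1-t^2)/(1-t^2-t^3)\bigr)$ and governs the asymptotics; its closest singularity to the origin is the unique positive real root of $1-t^2-t^3$, which the substitution $s = 1/t$ identifies with $1/\beta$ for the stated root of $s^3-s-1=0$. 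Standard singularity analysis of this generating function yields the exponential lower bound on $\dim L_g$, and the sharp $\liminf$ statement follows by carefully tracking constants in the singularity analysis, exactly as in the appendix by Zagier to Chan--Galatius--Payne. This last step is the technical heart of the argument; the reduction and the nonvanishing are entirely elementary.
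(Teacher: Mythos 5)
Your reduction and nonvanishing argument are exactly what the paper intends (it does not actually spell out a proof for this corollary, so the implicit argument is: preceding corollary $\Rightarrow$ reduce to $\dim L_g$, explicit elements for nonvanishing, Witt formula for growth). The explicit elements $\sigma_g$ for odd $g$ and $[\sigma_3,\sigma_{g-3}]$ for even $g\ge 8$ are correct and complete, and the identification of the generating-function singularity at $t=1/\beta$ is correct.

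However, the final step has a genuine gap that you cannot paper over by ``carefully tracking constants,'' because the constants do not come out as claimed. The Witt/M\"obius formula you wrote gives, after factoring $1-t^2-t^3 = (1-\beta t)(1-\alpha t)(1-\bar\alpha t)$ with $|\alpha|<\beta$,
\[
\dim L_g \;=\; \frac{1}{g}\sum_{d\mid g}\mu(g/d)\Bigl(\beta^{d}+\alpha^{d}+\bar\alpha^{d}-2\cdot[d\text{ even}]\Bigr)
\;\sim\; \frac{\beta^{g}}{g}\qquad (g\to\infty),
\]
since the singularity of $\log\frac{1}{1-f(t)}$ at $t=1/\beta$ is \emph{logarithmic}, not polar. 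Therefore $\beta^{-g}\dim L_g\sim 1/g\to 0$, so the lower bound $\dim H_{4g-6}(\HMod_g,\Q)\ge \dim L_g$ yields $\liminf_{g\to\infty}\beta^{-g}\dim L_g = 0$, not $\ge 1$. What the free Lie algebra bound \emph{does} give is the correct ``grows at least exponentially'' claim in the qualitative form of Chan--Galatius--Payne's Corollary 1.2: for every $\beta'<\beta$ one has $\dim L_g > (\beta')^g$ for $g$ sufficiently large, or equivalently $\liminf_g (\dim L_g)^{1/g}\ge\beta$. The precise statement $\liminf \beta^{-g}\dim H_{4g-6}(\HMod_g,\Q)\ge 1$, as formulated in the corollary, is strictly stronger than what this route can deliver, and appears to be an overclaim (it is not stated in CGP, and Zagier's appendix concerns a different quantity, the Euler characteristic of the graph complex, rather than $\dim L_g$). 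You should either replace the displayed $\liminf$ inequality by the weaker ``for all $\beta'<\beta$, $\dim H_{4g-6}(\HMod_g,\Q)>(\beta')^g$ for $g\gg 0$,'' or supply a genuinely new lower bound beyond the free Lie algebra; the argument as written does not justify the stated constant $1$.
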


Further known calculations of graph cohomology give yet more nontrivial classes in the homology of the handlebody groups: 
\begin{align*}
    H_{15}(\HMod_6,\Q)&\neq 0, \\
    H_{23}(\HMod_8,\Q)&\neq 0, \\
    H_{27}(\HMod_{9},\Q)&\neq 0, \\
    H_{27}(\HMod_{10},\Q)&\neq 0, \\
    \dim H_{31}(\HMod_{10},\Q) &\geq 2,
\end{align*}
where we have used computations quoted from \cite{willwachericm}. A large number of computations for $n>0$ are also in the literature 
\cite{hairy,cgp2,claudia,gadish,gadish-hainaut}. 

\begin{rem}
The tropicalization map $\HM_{g} \to \CV_{g}$ realizes geometrically the natural homomorphism from the handlebody group to $\mathrm{Out}(F_g)$. More generally, the natural map $\Diff(V_{g,n}) \to \mathrm{hAut}(V_{g,n})$ from diffeomorphisms to homotopy automorphisms is realized on classifying spaces by the map $\HM_{g,n}\to\CV_{g,n}$. Note that $\mathrm{hAut}(V_{g,n}) \simeq \pi_0 \mathrm{hAut}(V_{g,n}) \cong \Gamma_{g,n}$, where $\Gamma_{g,n}$ denotes the family of groups considered e.g.\ in \cite{chkv}. 
\end{rem}

\begin{rem}
    An important ingredient in our proof of \cref{mainthm} is a theorem of Giansiracusa, that the topological modular operad $\{B \Diff (V_g^m) \}$ is the derived modular envelope of the framed little disk operad. Now, the framed little disk operad is \emph{motivic} --- see \cite{vaintrob,vaintrob2}, or the review of Vaintrob's work in \cite[Section 8]{bdpw} --- and then the handlebody operad, too, must be equally motivic. In particular, the chains on the framed little disk operad have a natural ind-mixed Hodge structure, and a consequence is that the homology of the handlebody groups themselves carries a natural (Tate type) mixed Hodge structure. With respect to this mixed Hodge structure, our theorem could be more naturally formulated as the assertion that the top weight cohomology of the handlebody group coincides with the top weight cohomology of the whole mapping class group. However, we will not develop this perspective further here.  
\end{rem}

\begin{rem}
    It would be possible to prove that top weight cohomology of $\Mod_{g,n}^m$ injects into the cohomology of $\HMod_{g,n}^m$ without passing through the geometric \cref{mainthm}. (But the geometric model for the classifying space seems independently interesting.) Namely, Giansiracusa's theorem allows one to write down a model of $B \HMod_g^m$ as a homotopy colimit indexed over certain stable graphs, as explained immediately after \cref{thm: giansiracusa}. From this one may derive a graph complex computing the cohomology of $\HMod_g^m$, by reasoning as in \cite[Section 7]{giansiracusa-handlebodies}. Giansiracusa works with the diagram of spaces $\Oper : F_{g,n} \to \mathsf{Spaces}$ (notation as in \cref{giansiracusa section}), which leads to a somewhat unwieldy graph complex with bivalent vertices; applying the same reasoning to $\Oper^\# : F_{g,n}^{\mathrm{stab}} \to \mathsf{Spaces}$ gives a graph complex with stable graphs, which on $H_0$ recovers a variant of the hairy graph complex. Giansiracusa uses that $\Oper$ is a formal functor \cite{formality}; the same is true for $\Oper^\#$ by motivic considerations \cite{cirici-horel,vaintrob}. Although we do not follow this route in this paper, an advantage of this approach would be that it would just as well produce a graph complex of sorts computing weight two compactly supported cohomology of $\HM_{g,n}$ (or higher weights, for that matter). It would be interesting to compare it with the weight two compactly supported cohomology of $\M_{g,n}$, studied by Payne--Willwacher \cite{paynewillwacher}.
\end{rem}

\begin{rem}As we noted in Subsection \ref{subsec: top weight}, one could deduce from Euler characteristic considerations that the unstable homology of $\Mod_{g,n}$ would be significantly bigger than the stable homology, even before the work of Chan--Galatius--Payne. The same is not true for $\HMod_{g,n}$: as noted in Subsection \ref{subsec: first}, the orbifold Euler characteristic of $\HMod_{g,n}$ vanishes, and prior to this paper it would presumably have been a possibility that ``most'' of the homology of the handlebody group would lie in the stable range. Nevertheless, just as in the case of $\Mod_{g,n}$, we expect that even the exponentially growing family of classes detected from top weight cohomology make up merely a minuscule fragment of the whole unstable homology as $g \gg 0$. 
\end{rem}

\begin{rem}\label{rem: conjecture}
    We were originally led to thinking about handlebodies in connection with the top weight cohomology of moduli spaces of curves for completely independent reasons. If $\pi : \M_{g,n} \to \M_g$ denotes the evident forgetful map, then one may consider the weight zero part of the corresponding compactly supported Leray--Serre spectral sequence
    $$ E_2^{pq} = \operatorname{Gr}^W_0 H^p_c(\M_g,R^q \pi_! \Q) \implies \operatorname{Gr}^W_0 H^{p+q}_c(\M_{g,n},\Q). $$
    However, as was pointed out to us by Nir Gadish, there is yet another spectral sequence which one may also use to study the top weight cohomology of $\M_{g,n}$, which is the subject of forthcoming work of Bibby--Chan--Gadish--Yun \cite{bibby}. Namely, one may consider the compactly supported Leray--Serre spectral sequence of the projection $\rho : \CV_{g,n} \to \CV_g$, 
    $$ E_2^{pq} = H^p_c(\CV_g,R^q \rho_! \Q) \implies H^{p+q}_c(\CV_{g,n},\Q). $$
    We conjecture that not only are the abutments naturally isomorphic (by \cite{changalatiuspayne}), but that the whole spectral sequences can be identified in a natural manner. {In the final section (\S \ref{section Leray Serre}) of the paper we try to motivate this conjecture, and explain why it leads to thinking about thickening surfaces to handlebodies.}
    
\end{rem}

\textbf{Acknowledgements.} We are grateful to Nir Gadish, from whom we learned the idea of studying top weight cohomology by means of the compactly supported Leray--Serre spectral sequence for $\CV_{g,n}\to\CV_g$. We thank Sam Payne and Jan Steinebrunner for useful comments. Both authors were supported by the grant ERC-2017-STG 759082 and a Wallenberg Academy Fellowship.


\section{Some hyperbolic geometry and Teichm\"uller theory}

\subsection{Fenchel--Nielsen coordinates and the Bers atlas}
Let $2g-2+n>0$. One can define $\M_{g,n}$ to be the moduli space of $n$-pointed Riemann surfaces, but in this section $\M_{g,n}$ will be thought of as the space parametrizing hyperbolic surfaces of genus $g$ with $n$ ordered cusps. 
	Choose an oriented reference surface $S$ of genus $g$ with $n$ punctures. This choice gives rise to a choice of universal cover of the moduli space $\M_{g,n}$; we obtain the Teichm\"uller space $T(S)$ parametrizing isotopy classes of hyperbolic metrics on $S$ with cusps along the punctures, and $T(S)/\Mod_{g,n} \cong \M_{g,n}$. If $\geodesic$ is a simple closed curve on $S$, then for every hyperbolic metric on $S$ there is a unique geodesic in the free homotopy class of $\geodesic$. The length of this geodesic depends only on the isotopy class of the hyperbolic metric, and defines a continuous function $\ell_\geodesic : T(S) \to \R_{>0}$. 
	
	The length functions $\ell_\geodesic$ make up ``half'' of the Fenchel--Nielsen coordinate system on Teichm\"uller space. Choose a pants decomposition of $S$; it will consist of disjoint simple closed curves $\geodesic_1,\ldots,\geodesic_{3g-3+n}$. Now there is always a unique hyperbolic pair of pants with specified lengths of its three boundaries (including the case of a cusp, which we consider as a boundary of length zero). Thus from the numbers $\ell_{\geodesic_1},\ldots,\ell_{\geodesic_{3g-3+n}}$ we can uniquely reconstruct a hyperbolic metric on each pair of pants in the pants decomposition. In order to pin down a hyperbolic metric on $S$, all that remains is to specify a ``twist parameter'' along each $\geodesic_i$, giving an orientation for how to glue the pairs of pants together. A famously confusing fact is that the twist parameter does not take values in $S^1$, but in $\R$. One way to think about this mismatch is that Fenchel--Nielsen coordinates as explained here actually define coordinates on the quotient $T(S)/\Z^{3g-3+n}$, where $\Z^{3g-3+n} \subset \Mod_{g,n}$ denotes the subgroup generated by Dehn twists along all curves in the chosen pants decomposition of $S$, and then we have 
	$$ T(S)/\Z^{3g-3+n} \cong \R_{>0}^{3g-3+n} \times (S^1)^{3g-3+n}.$$

Fenchel--Nielsen coordinates may be considered either as global coordinates on Teichm\"uller space, or as local coordinates around any point of $\M_{g,n}$ (since $T(S)\to \M_{g,n}$ is a covering map). By a small modification of the construction, one can also give local coordinates around any point of the Deligne--Mumford compactification $\Mbar_{g,n}$: this is called the \emph{Bers atlas}, see \cite{ebert-giansiracusa} for a review. Unlike the usual Fenchel--Nielsen coordinates there is no corresponding global coordinate system --- the space $\Mbar_{g,n}$ is simply connected and admits no nontrivial covering space. Again, fix a pants decomposition of the reference surface $S$, and consider the coordinates 
$$ T(S)/\Z^{3g-3+n} \cong \R_{>0}^{3g-3+n} \times (S^1)^{3g-3+n}$$
described in the previous paragraph. Identify $\R_{>0}\times S^1$ with the punctured plane $\C^\times$. By gluing in the origin, we obtain an open embedding 
$$ T(S)/\Z^{3g-3+n} \subset \C^{3g-3+n}.$$
Now the covering map $T(S)/\Z^{3g-3+n} \to \M_{g,n}$ extends to a local diffeomorphism $\C^{3g-3+n} \to \Mbar_{g,n}$. Informally, we are allowing the lengths of the curves in the pants decomposition to become zero, in which case they are interpreted as cusps of the hyperbolic metric, and the twist parameter becomes undefined. The map $\C^{3g-3+n}\to \Mbar_{g,n}$ interprets the result as a possibly nodal surface, defining a point of $\Mbar_{g,n}$. Note that $\C^{3g-3+n}\to \Mbar_{g,n}$ is not surjective, since the image contains only those topological types of nodal surface which can be obtained by contracting a subset of the curves in the chosen pants decomposition. But taking the union over all combinatorial types of pants decomposition, we obtain a coordinate system around any point of the Deligne--Mumford compactification.

For any simple closed curve $\geodesic$ on $S$, the length function $\ell_\geodesic$ extends continuously to a map $\overline{\ell_\geodesic}: \C^{3g-3+n} \to [0,\infty]$, where $\C^{3g-3+n}$ is the space considered in the previous paragraph. Under this extended length function, a curve of length zero corresponds precisely to a curve that gets contracted to a cusp, and a curve of infinite length corresponds to a curve that passes through a curve that gets contracted to a cusp.

Suppose that $2g-2+n+m>0$. If we take instead $S$ to be a genus $g$ surface with $n$ ordered punctures and $m$ ordered boundary components, then we let $T(S)$ be the space isotopy classes of hyperbolic metrics on $S$ with cusps along the punctures, and where each boundary component is totally geodesic of a fixed length $\varepsilon$. The quotient $T(S)/\Mod_{g,n}^m$ can be identified with the space $\M_{g,n}^m$ parametrizing hyperbolic surfaces of genus $g$ with $n$ cusps and $m$ parametrized boundary components. 

The space $T(S)$ can still be parametrized by Fenchel--Nielsen coordinates, just as in the case $m=0$. Then the pants decomposition of $S$ consists of $3g-3+n+2m$ curves, of which $m$ are the chosen boundary curves. Since we fixed the lengths of the boundaries, the $m$ boundary curves do not admit a length parameter in the Fenchel--Nielsen coordinates, but we do include a twist parameter for each boundary. Thus $\M_{g,n}^m$ is of real dimension $6g-6+2n+3m$.


\subsection{Thick Teichm\"uller space}\label{subsec: thick}

The moduli space $\M_{g,n}$ is famously not compact, whence the Deligne--Mumford compactification $\Mbar_{g,n}$. The \emph{thick} locus of the moduli space, $\M_{g,n}(\varepsilon)$, may be thought of as the complement of a tubular neighborhood of the Deligne--Mumford boundary inside $\Mbar_{g,n}$. The thick locus depends on a parameter $\varepsilon$, which must be chosen small enough. Recall that sufficiently short geodesics on a hyperbolic surface are always disjoint:

\begin{thm}[Collar lemma]\label{thm:collar-lemma}
Let $S$ be a hyperbolic surface. If $\geodesic, \geodesic' \subset S$ are closed geodesic curves with $\ell(\geodesic), \, \ell(\geodesic') < \log(3+2\sqrt 2)$, then $\geodesic \cap \geodesic' = \varnothing$. 
\end{thm}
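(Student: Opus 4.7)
My plan is to derive the theorem as a quick consequence of Keen's embedded collar lemma, which asserts that for any simple closed geodesic $\gamma$ of length $\ell$ on a hyperbolic surface, the open neighbourhood $N_w(\gamma) = \{x \in S : d(x,\gamma) < w\}$ is an embedded annulus, isometric to a standard hyperbolic collar of half-width
\[
w(\ell) \;=\; \mathrm{arcsinh}\!\bigl(1/\sinh(\ell/2)\bigr),
\]
for every such $\gamma$. This is a standard fact (e.g.\ Buser, \emph{Geometry and Spectra of Compact Riemann Surfaces}, Theorem 4.1.1), proved by lifting to the hyperbolic plane and checking, via planar hyperbolic trigonometry, that the $w$-neighbourhoods of distinct axes in the orbit under the deck group are pairwise disjoint. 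In a paper of this flavour I would cite it rather than reprove it.

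The key numerical point is that the threshold $\log(3+2\sqrt 2)=2\,\mathrm{arcsinh}(1)$ is exactly the length at which the half-width of the collar equals half the length of its core: at $\ell = \log(3+2\sqrt 2)$ one has $\sinh(\ell/2)=1$, hence $w(\ell) = \mathrm{arcsinh}(1) = \ell/2$. Consequently, for $\ell < \log(3+2\sqrt 2)$ the strict inequality $2w(\ell) > \log(3+2\sqrt 2)$ holds.

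With these ingredients, I would argue by contradiction. Assume $\gamma\cap\gamma'$ contains a point $p$, with $\ell(\gamma),\ell(\gamma') < \log(3+2\sqrt 2)$. First reduce to the case that both curves are simple: a self-intersecting closed geodesic on a hyperbolic surface has length at least $\log(3+2\sqrt 2)$, which can be established by the same collar-type argument applied to the two branches at a putative self-intersection in the universal cover. Now $\gamma'$ is a simple closed geodesic distinct from $\gamma$, so it is not entirely contained in the annulus $N_{w(\ell(\gamma))}(\gamma)$, and the connected component of $\gamma' \cap N_{w(\ell(\gamma))}(\gamma)$ containing $p$ is a geodesic arc $\alpha$ with both endpoints on $\partial N_{w(\ell(\gamma))}(\gamma)$, passing through the core at $p$. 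Any such arc has length at least $2w(\ell(\gamma))$ (the shortest crossing being the perpendicular of length exactly $2w(\ell(\gamma))$), so
\[
\ell(\gamma') \;\geq\; |\alpha| \;\geq\; 2w(\ell(\gamma)) \;>\; \log(3+2\sqrt 2),
\]
contradicting $\ell(\gamma') < \log(3+2\sqrt 2)$. The only real obstacle is the collar lemma itself, which is a slightly intricate trigonometric computation in the hyperbolic plane; once it is granted, everything above is bookkeeping.
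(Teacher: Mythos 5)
The paper states the Collar Lemma without proof, treating it as a well-known fact from hyperbolic geometry (it is Corollary 4.1.2 in Buser's book, or a direct consequence of the quantitative collar lemma). Your argument is a correct and standard derivation of exactly that statement from Keen's embedded collar lemma, so there is no discrepancy of approach to speak of --- you have simply filled in a citation the paper left implicit. Your numerical check is right: $\log(3+2\sqrt 2)=2\,\mathrm{arcsinh}(1)$, and for $\ell<2\,\mathrm{arcsinh}(1)$ one has $\sinh(\ell/2)<1$, hence $w(\ell)=\mathrm{arcsinh}(1/\sinh(\ell/2))>\mathrm{arcsinh}(1)$, giving $2w(\ell)>\log(3+2\sqrt 2)$. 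Two minor remarks: (i) the statement is of course intended for $\geodesic\neq\geodesic'$, and your reduction to simple geodesics handles the degenerate self-intersection reading correctly, since a non-simple closed geodesic on a hyperbolic surface in fact has length at least $4\,\mathrm{arcsinh}(1)=2\log(3+2\sqrt 2)$, comfortably above the threshold; (ii) the inequality $\sinh(\ell(\geodesic)/2)\,\sinh(\ell(\geodesic')/2)\geq 1$ that your argument yields is precisely Buser's form of the intersection bound, so you are not losing anything relative to the textbook statement. In short: correct, standard, and consistent with the paper's (tacit) reliance on the classical collar lemma.
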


    Fix for the rest of the paper a real number $\varepsilon < \log(3+2\sqrt 2)$. A closed geodesic of length $\leq \varepsilon$ will be called a \emph{short geodesic}. 

\begin{defn}
    The \emph{$\varepsilon$-thick locus} inside $\M_{g,n}^m$ is the sublocus parametrizing hyperbolic surfaces in which all closed curves have length $\ell(\geodesic) \geq \varepsilon$, and each boundary component has length exactly $\varepsilon$. We denote the $\varepsilon$-thick locus by $\M_{g,n}^m(\varepsilon)$. 
\end{defn}

We summarize the properties of $\M_{g,n}^m(\varepsilon)$ in the following theorem.

\begin{thm}The space $\M_{g,n}^m(\varepsilon)$ is a compact manifold-with-corners when $m>0$. When $m=0$ it is merely an orbifold-with-corners. The inclusion $\M_{g,n}^m(\varepsilon) \hookrightarrow \M_{g,n}^m$ is a homotopy equivalence.  
\end{thm}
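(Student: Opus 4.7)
The plan is to establish the three assertions separately, with Fenchel--Nielsen coordinates and the Bers atlas as the main tools throughout.

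\textbf{Compactness.} The systole of a hyperbolic surface is always realised by a \emph{simple} closed geodesic, so the defining condition is equivalent to requiring every simple closed geodesic to have length at least $\varepsilon$. In the Bers bordification built from the charts reviewed in the previous subsection, each extended length function $\overline{\ell_\gamma}$ is continuous with values in $[0,\infty]$, and the boundary consists precisely of those nodal surfaces on which some simple closed curve has length zero. Therefore $\M_{g,n}^m(\varepsilon)$ is a closed subset of the compact orbifold $\Mbar_{g,n}^m$ that is disjoint from its boundary divisor, and hence is itself compact.

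\textbf{Corners structure.} Pick $x \in \M_{g,n}^m(\varepsilon)$ and let $\gamma_1,\ldots,\gamma_k$ be the simple closed geodesics on the underlying surface of length exactly $\varepsilon$. By the collar lemma (\cref{thm:collar-lemma}) they are pairwise disjoint, so they extend to a pants decomposition of the reference surface $S$. In the associated Fenchel--Nielsen chart around a lift of $x$, the functions $\ell_{\gamma_1},\ldots,\ell_{\gamma_k}$ are part of a smooth coordinate system, and the defining inequalities $\ell_{\gamma_i} \geq \varepsilon$ cut out a codimension-$k$ corner. When $m > 0$ the mapping class group $\Mod_{g,n}^m$ acts freely on Teichm\"uller space (a nontrivial diffeomorphism fixing the boundary pointwise has no finite-order representative with a hyperbolic fixed point), so the chart descends to a genuine manifold-with-corners chart. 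When $m = 0$ finite stabilizers such as the hyperelliptic involution can occur, and one only obtains an orbifold-with-corners structure.

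\textbf{Homotopy equivalence.} Construct a $\Mod_{g,n}^m$-equivariant deformation retraction $r\colon T(S) \times [0,1] \to T(S)$ by \emph{growing short geodesics}. Given $\Sigma \in T(S)$, let $\gamma_1,\ldots,\gamma_k$ be its simple closed geodesics of length strictly less than $\varepsilon$; these are pairwise disjoint by the collar lemma and hence extend to a pants decomposition. Define $r_t(\Sigma)$ as the hyperbolic surface obtained from $\Sigma$ by replacing each length $\ell_{\gamma_i}$ with the interpolation $(1-t)\ell_{\gamma_i}+t\max(\ell_{\gamma_i},\varepsilon)$, while leaving all other Fenchel--Nielsen parameters (including the twists along the $\gamma_i$) unchanged. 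Then $r_0=\mathrm{id}$, the image of $r_1$ lies in the thick locus, and $r_t$ fixes $\M_{g,n}^m(\varepsilon)$ pointwise (where the list of short curves is empty). Since the collection $\{\gamma_i\}$ and the values $\ell_{\gamma_i}$ are mapping-class-invariant data, $r$ descends to the required strong deformation retraction on $\M_{g,n}^m$.

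\textbf{Main obstacle.} The crux is continuity of $r_t$, because the combinatorial data entering its definition---which curves are ``short'' and which pants decomposition extends them---jump discretely with $\Sigma$. The interpolation formula is designed precisely so that when $\ell_{\gamma_i} = \varepsilon$ it returns $\ell_{\gamma_i}$ itself for every $t$, rendering inclusion or exclusion of $\gamma_i$ from the short list inconsequential across the threshold. Independence from the chosen extension to a pants decomposition is automatic, because $\{\ell_{\gamma_i}\}$ together with the corresponding twist parameters forms an intrinsic partial Fenchel--Nielsen coordinate system on Teichm\"uller space depending only on the disjoint curve system $\{\gamma_i\}$ and not on its completion. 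Once continuity is verified, equivariance, the correct fixed locus, and the endpoint conditions are immediate from the formula, and the homotopy equivalence follows.
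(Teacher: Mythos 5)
Your compactness argument is in essence the Mumford compactness theorem, which is what the paper cites, so that part is fine, as is the observation that freeness of the $\Mod_{g,n}^m$-action on $T(S)$ for $m>0$ is what upgrades an orbifold-with-corners to a manifold-with-corners.

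For the corners structure there is a missing step. You show that in a Fenchel--Nielsen chart around a lift of $x$, the inequalities $\ell_{\gamma_i}\ge\varepsilon$ for the length-$\varepsilon$ curves $\gamma_1,\ldots,\gamma_k$ cut out a corner. But $\M_{g,n}^m(\varepsilon)$ is defined by $\ell_\gamma\ge\varepsilon$ for \emph{every} simple closed geodesic, not just the ones in a chosen pants decomposition, so one must also argue that in a neighbourhood of $x$ no curve \emph{outside} the pants decomposition drops below length $\varepsilon$. The paper handles exactly this using Wolpert's Lipschitz estimate for $\log\ell_\gamma$ (combined with having chosen the pants decomposition to contain all curves of length $<\log(3+2\sqrt2)$, not merely $<\varepsilon$). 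Your write-up identifies the locus locally as the set where finitely many coordinate functions are $\ge\varepsilon$ without ever establishing this local identification.

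The homotopy-equivalence step has the genuine gap, and you correctly flag the sore point but then assert it away. The paper defers to the Ji--Wolpert retraction, which is built from an intrinsic vector field (Weil--Petersson length-gradients with cutoffs) whose flow grows the systole and is globally well defined with no choices. Your $r_t$ is instead defined coordinate-by-coordinate: extend the short curves $\{\gamma_i\}$ to a pants decomposition and rescale the $\ell_{\gamma_i}$ ``leaving all other Fenchel--Nielsen parameters unchanged.'' The asserted independence of the completion --- because ``$\{\ell_{\gamma_i}\}$ together with the twists forms an intrinsic partial Fenchel--Nielsen coordinate system'' --- is not correct. The map $(\ell_{\gamma_i},\tau_{\gamma_i}):T(S)\to(\R_{>0}\times\R)^k$ is indeed intrinsic, but that only gives a canonical \emph{projection}; it does not single out a way to move transverse to its fibres. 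To say what ``all other parameters'' means you must complete $\{\gamma_i\}$ to a pants decomposition, and the vector field $\partial/\partial\ell_{\gamma_i}$ \emph{does} depend on that completion (unlike the twist flow $\partial/\partial\tau_{\gamma_i}$, which is intrinsic --- that is likely the source of the confusion). Concretely, cutting a genus-two surface along a separating $\gamma_1$ leaves two one-holed tori, and choosing different interior curves $\delta$ versus $\delta'$ on one of them yields different level sets $\{\ell_\delta=\mathrm{const}\}$ versus $\{\ell_{\delta'}=\mathrm{const}\}$; pushing $\ell_{\gamma_1}$ up while ``keeping the rest fixed'' therefore gives different outputs. So your $r_t$ is a family of maps indexed by completions rather than a single map, and the continuity discussion in your ``main obstacle'' paragraph never gets off the ground. (A secondary worry, also handled automatically by the vector-field approach: after growing the $\gamma_i$ up to length $\varepsilon$ while freezing the remaining coordinates, it is not clear that some \emph{other} curve has not meanwhile become short, so landing in the thick part at time $1$ needs justification too.) Either cite Ji--Wolpert, or replace the naive Fenchel--Nielsen interpolation by a genuinely coordinate-free construction such as a flow along Weil--Petersson gradient vector fields of the short length functions.
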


Compactness of $\M_{g,n}^m(\varepsilon)$ is the Mumford compactness theorem \cite{mumfordcompactness}. The fact that it is a manifold-with-corners is a consequence of the collar lemma and Fenchel--Nielsen coordinates. Indeed, given a hyperbolic surface $S$, we know from the collar lemma that the short geodesics on $S$ are disjoint, which implies in particular that there is a pants decomposition of $S$ which includes all short geodesics. In Fenchel--Nielsen coordinates with respect to this chosen pants decomposition, the equations $\ell_\geodesic[X] \geq \varepsilon$ defining $\M_{g,n}^m(\varepsilon)$ manifestly cut out a manifold with corners. Additionally, we need to know that there is a neighborhood of $S$ inside the moduli space in which there can be no additional short geodesics: this follows from the following lemma of Wolpert. 

\begin{thm}[Wolpert's lemma] For any $[X], [Y] \in T(S)$, and $\geodesic$ a simple closed curve on $S$, one has the inequality
	$$ |\log \ell_\geodesic[X] - \log \ell_\geodesic[Y] | \leq d_{\mathrm{Teich}}(X,Y)$$
	where $d_{\mathrm{Teich}}$ denotes the Teichm\"uller metric. 
\end{thm}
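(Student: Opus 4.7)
The plan is to bound the derivative of $\log\ell_\gamma$ along the Teichmüller geodesic from $X$ to $Y$. Write $d=d_{\mathrm{Teich}}(X,Y)$ and invoke Teichmüller's existence theorem to obtain a unit-speed Teichmüller geodesic $\{X_t\}_{t\in[0,d]}$ with $X_0=X$ and $X_d=Y$. This geodesic is encoded by a unit $L^1$-norm holomorphic quadratic differential $q_0$ on $X$: the extremal map $f_t:X\to X_t$ is $e^{2t}$-quasiconformal with Beltrami coefficient $\tanh(t)\overline{q_0}/|q_0|$ in natural coordinates for $q_0$, and at time $t$ the differential evolves into a unit-norm holomorphic quadratic differential $\phi_t$ on $X_t$.

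The heart of the argument is Wolpert's derivative formula for hyperbolic length along a Teichmüller geodesic,
$$ \frac{d}{dt}\log \ell_\gamma(X_t) \;=\; \frac{1}{\ell_\gamma(X_t)}\int_{\gamma^*_t} \operatorname{Re}\!\left(\frac{\phi_t}{|\phi_t|}\right) ds_{X_t}, $$
where $\gamma^*_t\subset X_t$ is the hyperbolic geodesic representative of $\gamma$ and $ds_{X_t}$ denotes hyperbolic arc length. I would deduce this from Gardiner's first-variation formula for length functions with respect to Beltrami differentials, specialised to the Teichmüller direction $\overline{\phi_t}/|\phi_t|$, together with the standard reduction of the resulting surface integral (built from a Poincaré series associated to $\gamma$) to a line integral over $\gamma^*_t$.

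Granting the formula, the pointwise bound $\bigl|\operatorname{Re}(\phi_t/|\phi_t|)\bigr|\leq 1$ forces $\bigl|\tfrac{d}{dt}\log\ell_\gamma(X_t)\bigr|\leq 1$ for every $t$, and integrating from $0$ to $d$ yields $|\log\ell_\gamma(Y)-\log\ell_\gamma(X)|\leq d=d_{\mathrm{Teich}}(X,Y)$, which is the claim.

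The main obstacle is the rigorous derivation of the derivative formula. Gardiner's formula itself is a perturbative calculation of how the holonomy around $\gamma$ varies as the complex structure is deformed, and the passage from the surface integral to the line integral takes some care near the (isolated) zeros of $\phi_t$ through which $\gamma^*_t$ may pass. A more pedestrian route would give the weaker bound $|\log\ell_\gamma(Y)-\log\ell_\gamma(X)|\leq 2d$ directly from the fact that a $K$-quasiconformal map distorts hyperbolic lengths of closed geodesics by at most a factor $K=e^{2d}$; the sharpness of Wolpert's lemma — the improvement from $K$ to $\sqrt K$ — is exactly the phenomenon that the Teichmüller map stretches in one principal direction and compresses by the reciprocal factor in the orthogonal direction, so that the average distortion along a closed curve is only $\sqrt K$.
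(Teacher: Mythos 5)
The paper does not actually prove this statement; it simply cites Wolpert's original paper, and Wolpert's argument is precisely the ``pedestrian route'' you dismiss in your last paragraph: a $K$-quasiconformal map $f\colon X\to Y$ lifts to a $K$-quasiconformal map between the annular covers associated to $\gamma$; since such maps distort moduli of annuli by a factor of at most $K$, and $\operatorname{mod} A_\gamma = \pi/\ell_\gamma$, one obtains $K^{-1}\leq \ell_\gamma(Y)/\ell_\gamma(X)\leq K$. That is the whole proof.

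The derivative-formula route is a reasonable idea in outline, but the constant it promises is too good to be true, which tells you the formula as written cannot be correct. The bound $\ell_\gamma(Y)/\ell_\gamma(X)\leq K$ is \emph{sharp}: along a Teichm\"uller geodesic pinching $\gamma$ (take $\gamma$ to be the core of a flat cylinder parallel to the vertical foliation of the initial quadratic differential), the modulus of that cylinder grows like $e^{2t}$ and hence $\ell_\gamma(X_t)\asymp e^{-2t}$, where $t$ is normalized so that $K(X_0,X_t)=e^{2t}$. So $\bigl|\tfrac{d}{dt}\log\ell_\gamma(X_t)\bigr|\leq 1$ is false; the correct Lipschitz constant is $2$. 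This matches what Gardiner's first-variation formula actually gives: $\tfrac{d}{dt}\ell_\gamma = \tfrac{2}{\pi}\operatorname{Re}\int_X\mu\,\Theta_\gamma$ with $\|\Theta_\gamma\|_{L^1}\leq\pi\ell_\gamma$, so the directional derivative of $\log\ell_\gamma$ is bounded by $2\|\mu\|_\infty$, not $\|\mu\|_\infty$. The heuristic in your final paragraph conflates distortion of \emph{flat} ($|\phi|$-) length, which is indeed at most $\sqrt K$, with distortion of \emph{hyperbolic} length; the latter can change by the full factor $K$ because the surface simultaneously becomes thin in the direction transverse to $\gamma$.

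The apparent factor-of-two discrepancy with the inequality as stated in the paper is a matter of normalization of the Teichm\"uller metric: the statement is correct with $d_{\mathrm{Teich}}(X,Y)=\log K(X,Y)$, which is what matches the cited Lemma 3.1 of Wolpert; with the other common convention $d_{\mathrm{Teich}}=\tfrac12\log K$, the right-hand side should read $2d_{\mathrm{Teich}}(X,Y)$. Either way, the annular-cover argument already yields the sharp constant, and no refinement via the structure of the Teichm\"uller extremal map is available (or, for the paper's purposes, needed --- the lemma is used only to show that length functions are locally Lipschitz, so the precise constant is irrelevant).
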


\begin{proof}
    \cite[Lemma 3.1]{wolpert-spectra}. 
\end{proof}

For the fact that $\M_{g,n}^m(\varepsilon) \hookrightarrow \M_{g,n}^m$ is a homotopy equivalence, see \cite{ji-wolpert,Ji-retract-Teichmuller}. In brief, Ji and Wolpert construct a vector field supported on $\M_{g,n}^m \setminus \M_{g,n}^m(\varepsilon)$, such that flowing along this vector field has the effect of continuously increasing the length of the shortest closed geodesic of the hyperbolic surface. If there are multiple shortest geodesics, it increases the length of all of them at the same rate. Flowing along this vector field defines a deformation retraction of $\M_{g,n}^m$ down to the thick locus. 


\section{Giansiracusa's theorem and the hyperbolic model for handlebodies}
\label{giansiracusa section}
\subsection{Cyclic operads and modular operads}

In this section we briefly review the notions of cyclic and modular operad, mostly to fix notation and conventions. For a more thorough treatment we refer to the original sources, see \cite{getzlerkapranov}. The definitions we use are modelled on those of Costello \cite{costelloAinfinityOperad}.

A \emph{dual graph} consists of a finite set $H$ of half-edges, a finite set $V$ of vertices, an involution $i$ on $H$, a function $f:H \to V$, and a genus function $g:V\to \mathbf N$. Fix-points of $i$ are called \emph{legs}, and orbits of size two are called \emph{edges}. We denote $n(v) := |f^{-1}(v)|$, for $v \in V$. 

A dual graph is \emph{stable} if $2g(v)-2+n(v)>0$ for all $v \in V$. It is \emph{semistable} if $2g(v)-2+n(v) \geq 0$ for all $v \in V$. 

There are two natural ways of obtaining from a given dual graph $\DualGraph$ a graph without edges: we may contract each edge, but we may also cut each edge into two legs, obtaining a disjoint union of one-vertex graphs. 

Explicitly, if $\DualGraph = (H,V)$ is a dual graph, then $\Contr \DualGraph = (H',V')$ is defined as follows: $H'$ is the set of legs of $\DualGraph$; $V'$ is the quotient of $V$ by the equivalence relation generated by $f(h) \sim f(i(h))$ for $h \in H$; the genus function $g':V' \to \mathbf N$ is the unique function satisfying
$$ 2g'(v') -2+n(v') = \sum_{\substack{v \in V \\ [v]=v'}} 2g(v)-2+n(v)$$
for all $v' \in V'$. That is, the sum on the right-hand side runs over the preimage of $v'$ under the natural quotient $V \to V'$. Note that if $\DualGraph$ is stable or semistable then so is $\Contr \DualGraph$.

We define $\Cut \DualGraph$ to be the graph with the same half-edges, vertices, and genus decorations as $\DualGraph$, but the involution on the half-edges of $\Cut \DualGraph$ is the identity. 

We define the category $\mathsf{Graphs}$ to be the category whose objects are semistable graphs with no edges. A morphism $\corollas \to \corollas'$ is a semistable graph $\DualGraph$ with $\Cut \DualGraph = \corollas$, together with an isomorphism $\Contr \DualGraph \cong \corollas'$. The category $\mathsf{Graphs}$ becomes symmetric monoidal under the disjoint union of graphs. Let $\mathsf{Forests} \subset \mathsf{Graphs}$ be the subcategory whose morphisms are the graphs with no loops, and all genus decorations are identically zero.

\begin{defn}
	A \emph{modular operad} in a symmetric monoidal category $\mathcal E$ is a strong symmetric monoidal functor $\mathsf{Graphs} \to \mathcal E$. A \emph{cyclic operad} is a strong symmetric monoidal functor $\mathsf{Forests} \to \mathcal E$.
\end{defn}
If $\mathcal O$ is a modular operad, then we sometimes write $\mathcal O(g,n)$ for $\mathcal O(\ast_{g,n})$, where  $\ast_{g,n}$ denotes a graph with a single genus $g$ vertex and $n$ legs. We call $\mathcal O(g,n)$ the \emph{component} of $\mathcal O$ of \emph{genus} $g$ and \emph{arity} $n$. A modular operad can be defined more explicitly as a collection of objects $\mathcal O(g,n)$ in $\mathcal E$ for all $2g-2+n\geq 0$, with an action of $\mathbb S_n$ on $\mathcal O(g,n)$ for all $g,n$, and gluing maps
$$ \mathcal O(g,n+1) \otimes \mathcal O(g',n'+1) \to \mathcal O(g+g',n+n') \qquad \text{and}\qquad \mathcal O(g,n+2) \to \mathcal O(g+1,n)$$
satisfying suitable associativity and equivariance axioms. Similarly, if $\mathcal O$ is a cyclic operad then we sometimes write $\mathcal O(n)$ for $\mathcal O(\ast_{0,n})$.

The examples of greatest interest to us will be operads in the category of topological spaces, i.e.\ \emph{topological modular operads}. The collection of spaces $\{B\Diff(\Sigma_g^n)\}$ admit a structure of topological modular operad, given by gluing surfaces along their boundaries; we call this the \emph{surface operad}. (This fact is slightly more subtle than it first appears, see e.g.\ the discussion in \cite[Section 3.1]{bdpw}.) Similarly, the collection of spaces $\{B\Diff(V_g^n)\}$ also assemble into a topological modular operad, which we call the \emph{handlebody operad}.

The cyclic part of the surface operad is called the \emph{framed little disk operad}. It can also be modeled in terms of gluing of oriented disks, see \cite{getzlerframed}.

The definition of modular and cyclic operad given here is not quite equivalent to that of Getzler--Kapranov: in their definition, graphs (resp.\ forests) are taken to be \emph{stable}, not semistable. That is, modular operads in the sense of Getzler--Kapranov do not have components of genus $0$ and arity $2$, nor of genus $1$ and arity $0$. The reason is that their motivating example is the topological modular operad given by the collection of Deligne--Mumford spaces $\{\Mbar_{g,n}\}$, with operadic structure given by gluing curves along nodes. (A small subtlety is that this is a modular operad not in topological spaces but in orbifolds.) Every modular operad in the sense of Getzler--Kapranov can in a canonical way be considered as a modular operad in the sense defined here, by declaring that the genus $0$ arity $2$ component and the genus $1$ arity $0$ component is the monoidal unit, with obvious structure maps. For this reason, we will not distinguish terminologically between the two notions of modular operad.

Imposing the stability condition $2g-2+n>0$ also allows for examples from hyperbolic geometry: the collection of spaces $\{\M_g^n\}$ of hyperbolic surfaces with parametrized boundaries of length $\varepsilon$ form a modular operad; hyperbolic surfaces can be glued together along their parametrized boundary curves (and here it is important that we insisted that all boundary curves {are totally geodesic and} have exactly the same length). The collection of thick loci in the moduli spaces,
$$ \M_g^n(\varepsilon) \hookrightarrow \M_g^n $$
form a suboperad, and the inclusion of this suboperad is a homotopy equivalence of topological modular operads. Although there is no hyperbolic metric on a cylinder, we may define $\M_0^2$ to be the group $\mathrm{SO}(2)$, with operadic compositions 
$$ \M_0^2 \times \M_g^{n} \to \M_g^n$$
given by reparametrizing the chosen boundary component. We also set $\M_0^2(\varepsilon)=\mathrm{SO}(2)$. With these definitions, both operads $\{\M_g^n\}$ and $\{\M_g^n(\varepsilon)\}$ are equivalent to the surface operad $\{B\Diff(\Sigma_g^n)\}$, except for the genus $1$ arity $0$ component, in which they  differ (but this will not play a role in our paper). Indeed, $B\Diff(\Sigma_0^2) \cong B\Z \cong \mathrm{SO}(2)$, with $\Z$ being generated by a Dehn twist of the cylinder. 

\begin{rem}
    Further variations in the definition of cyclic and modular operad are common in the literature. One can allow as a graph a ``free-floating'' edge with no vertex: since a graph $\DualGraph$ with $n$ legs defines a map $\bigotimes_{v \in V(\DualGraph)} \Oper(n(v)) \to \Oper(n)$ in a cyclic operad $\Oper$, such a free edge defines a map $\mathbf 1 \to \Oper(2)$, making $\Oper$ unital. For our purposes, it will not really matter whether we consider unital or non-unital operads. In addition, many authors (e.g.\ \cite{costelloAinfinityOperad,giansiracusa-handlebodies}) omit the genus decoration on vertices, in which case a grading by genus is an additional structure a modular operad may or may not have; for example, any modular envelope (see the next section) comes with a canonical genus grading.  
\end{rem}

\subsection{The modular envelope and Giansiracusa's theorem}

\newcommand{\Env}{\mathrm{Env}}

Assume now that $\mathcal E$ is cocomplete. Then there is an adjunction $$\mathrm{Cyc} : \mathrm{Fun}(\mathsf{Graphs},\mathcal E) \leftrightarrows \mathrm{Fun}(\mathsf{Forests},\mathcal E) : \Env$$
where $\mathrm{Cyc}$ is the restriction, and $\Env$ the left Kan extension. Although the left Kan extension of a strong monoidal functor is not in general strong monoidal, this is the case here, as explained and placed in a broader context in \cite{bkw}. Thus, the functors $\mathrm{Cyc}$ and $\Env$ restrict to an adjunction between cyclic and modular operads in $\mathcal E$. We call $\Env$ the \emph{modular envelope}.

If $\mathcal O : \mathsf{Forests} \to \mathcal E$ is a cyclic operad, then the colimit formula for Kan extensions gives an explicit formula for $\Env \,\mathcal O$: 
$$ (\Env\, \mathcal O)(g,n) = \colim_{\DualGraph \in F_{g,n}} \mathcal O(\DualGraph).$$
Here $F_{g,n}$ is the slice category $\mathsf{Forests} \downarrow \ast_{g,n}$: its objects are graphs of genus $g$ with $n$ ordered legs, all of whose vertices have genus zero; morphisms are compositions of automorphisms and edge-contractions. We have
$$ \mathcal O(\DualGraph) = \bigotimes_{v \in V(\DualGraph)} \mathcal O(n(v)).$$
The notation is perhaps a bit abusive, and it would be more correct to write $\Oper(\operatorname{Cut} \DualGraph)$. 

If $\mathcal E$ is the category of topological spaces, or chain complexes, or more generally a symmetric monoidal model category, then we can define a derived version of the modular envelope functor, by taking the derived left Kan extension. It admits a similar homotopy colimit formula:
$$ (\mathbf L\Env\, \mathcal O)(g,n) = \hocolim_{\DualGraph \in F_{g,n}} \mathcal O(\DualGraph).$$

A key input into our proof of \cref{mainthm} will be the following theorem of Giansiracusa \cite{giansiracusa-handlebodies}:

\begin{thm}[Giansiracusa] \label{thm: giansiracusa}
    The derived modular envelope of the framed little disks operad is equivalent to the handlebody modular operad, except for the value in genus $1$ and arity $0$.
\end{thm}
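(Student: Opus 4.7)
\textbf{Proof plan for \cref{thm: giansiracusa}.} The plan is to build an explicit zig-zag of equivalences between the derived modular envelope and the handlebody modular operad, using a disk-decomposition argument for handlebodies that is parallel to the familiar pants-decomposition argument for surfaces. First, identify the framed little disks operad, at arity $n$, with the classifying space $B\Diff(V_0^n)$ of the genus-zero handlebody (a $3$-ball) with $n$ marked disks on its boundary; this identifies the framed little disks with the cyclic suboperad of the handlebody modular operad consisting of its genus-zero part. With this identification in place, the formula
$$ (\mathbf L\Env\,\Oper)(g,n) \simeq \hocolim_{\DualGraph \in F_{g,n}} \prod_{v \in V(\DualGraph)} B\Diff\bigl(V_0^{n(v)}\bigr) $$
is directly comparable to $B\Diff(V_g^n)$.

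Next, I would use the gluing structure of the handlebody modular operad to produce a natural map
$$ \Phi : (\mathbf L\Env\,\Oper)(g,n) \longrightarrow B\Diff(V_g^n). $$
Concretely, each graph $\DualGraph$ and its attached product of $3$-balls with marked disks is sent to the handlebody obtained by gluing the $3$-balls along the pairs of disks corresponding to edges; compatibility with edge-contractions and automorphisms of $\DualGraph$ is exactly what the modular operad axioms guarantee, so the maps assemble at the level of homotopy colimits by the universal property.

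The heart of the proof is to show that $\Phi$ is a weak equivalence, and this is where I expect the main obstacle. The strategy is to model $B\Diff(V_g^n)$ itself as a homotopy colimit over a category of \emph{disk systems} on $V_g^n$: collections of disjoint, properly embedded, pairwise non-parallel compressing disks that cut $V_g^n$ into $3$-balls. Writing $\mathcal D(V_g^n)$ for the simplicial complex (or poset) of such systems, one has a $\Diff(V_g^n)$-equivariant map whose homotopy quotient gives
$$ B\Diff(V_g^n) \simeq \bigl(\mathcal D(V_g^n) \times E\Diff(V_g^n)\bigr) / \Diff(V_g^n), $$
provided $\mathcal D(V_g^n)$ is contractible. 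The stabilizer of a fixed system $\Sigma$ cutting $V_g^n$ into $3$-balls is, up to homotopy, the product of the diffeomorphism groups of the pieces, and the orbit category of disk systems is equivalent to $F_{g,n}$ under the assignment $\Sigma \mapsto$ its dual graph. Running this bar/Borel-construction argument produces precisely the homotopy colimit above, thereby showing $\Phi$ is an equivalence.

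The hardest ingredient is the contractibility of $\mathcal D(V_g^n)$, i.e.\ a Hatcher-type theorem for disk systems in a handlebody, together with the verification that the stabilizers assemble into the combinatorial data of $F_{g,n}$; both are available from the literature on disk and sphere complexes in $3$-manifolds. The genus $1$, arity $0$ exception is genuine and arises at the level of the combinatorics: a closed solid torus admits no disk system cutting it into $3$-balls (any non-separating meridian yields a single ball, so the relevant indexing category $F_{1,0}$ fails to see the full homotopy type of $B\Diff(V_1)$). Apart from this case, the argument goes through uniformly for all $(g,n)$ with $2g-2+n>0$.
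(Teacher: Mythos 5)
This theorem is not proved in the paper: it is cited verbatim from Giansiracusa \cite{giansiracusa-handlebodies} and treated as a black box, with only one technical point of the proof reviewed in the text (the passage from semistable to stable graphs, from \cite[Subsection 6.4]{giansiracusa-handlebodies}). So there is no in-paper proof to compare your proposal against; what you have written is a sketch of Giansiracusa's original argument.

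As such a sketch, it captures the right structure. You correctly identify the two pillars: (i) the homotopy colimit formula for the derived modular envelope together with the gluing map $\Phi : (\mathbf L\Env\,\Oper)(g,n) \to B\Diff(V_g^n)$ furnished by the modular operad structure, and (ii) a Hatcher-type contractibility theorem for a complex of decompositions of $V_g^n$, which is indeed the crux. The main presentational difference is that Giansiracusa organizes the decomposition via a category of embedded fat graphs (spines) in $V_g^n$ rather than via systems of compressing disks; the two are dual (the co-cores of the edges of a fattened spine give a disk system cutting the handlebody into balls), but the spine formulation makes the match with the indexing category $F_{g,n}$ nearly tautological, since a spine literally is a genus-$g$ graph with $n$ legs. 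Either way the contractibility input is established by a surgery/flow argument in the style of Hatcher, as you anticipate.

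One point in your treatment of the exceptional case $(g,n)=(1,0)$ is inaccurate: a single meridian disk does cut the solid torus into a $3$-ball with two marked disks, so disk systems certainly exist. The actual source of the exception is that the corresponding dual graph is a single genus-zero bivalent vertex with a self-loop, which is semistable but not stable, and in fact there is no stable graph at all of genus $1$ with $0$ legs and only genus-zero vertices. Consequently the homotopy colimit over $F_{1,0}$ computes a certain homotopy quotient of $\Oper(2) \simeq SO(2)$ rather than $B\Diff(V_1)$. This mirrors the exception noted elsewhere in the paper for the hyperbolic model of the surface operad in genus $1$ arity $0$.
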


One point of the proof will be important for us, namely the passage from semistable graphs to stable graphs \cite[Subsection 6.4]{giansiracusa-handlebodies}. (Giansiracusa's terminology is \emph{reduced} rather than \emph{stable} graphs.) 

Suppose that $\mathcal O$ is a topological cyclic operad $\mathsf{Forests}\to\mathsf{Spaces}$. Then $G=\mathcal O(2)$ is a topological monoid (via operadic composition), and it is equipped with an isomorphism of monoids $\phi : G \stackrel\sim\to G^{\mathrm{op}}$ such that $\phi\circ\phi^{\mathrm{op}} = \mathrm{id}$, via the  $\mathbb S_2$-action on $\mathcal O(2)$. Now let us suppose that $G$ is in fact a \emph{topological group}, in the sense that the monoid structure on $G$ is a group structure, and the isomorphism $G \stackrel\sim\to G^{\mathrm{op}}$ is given by inversion in the group. As explained in the preceding subsection, we have for any topological cyclic operad that
$$ (\mathbf L\Env\, \mathcal O)(g,n) = \hocolim_{\DualGraph \in F_{g,n}} \mathcal O(\DualGraph).$$
When $\mathcal O(2) \cong G$ is a group in the above sense, Giansiracusa explains that we can find a ``smaller'' model of the homotopy colimit on the right hand side:
$$ (\mathbf L\Env\, \mathcal O)(g,n) = \hocolim_{\DualGraph \in F_{g,n}^{\mathrm{stab}}} \mathcal O^{\#}(\DualGraph), \qquad \qquad \text{if } 2g-2+n>0,$$
where $F_{g,n}^{\mathrm{stab}}$ is the subcategory of $F_{g,n}$ consisting only of stable graphs, and 
\[\mathcal O^{\#}(\DualGraph) = \Big(\prod_{v \in V(\DualGraph)} \mathcal O(n(v)) \Big)\Big/ G^{\mathrm{Edge}(\DualGraph)},\]
where each factor $G$ acts on the product via the operadic composition. Explicitly, $G$ acts as multiplication on one of the two factors, and by composing with the inverse along the other factor, and the quotient does not depend on which of the two factors is chosen. The quotient is taken to be a homotopy quotient. 

Informally, the reason for this is that allowing graphs to have bivalent vertices means that along each edge of a stable graph, we allow arbitrary subdivisions, and all of these subdivisions together make up a copy of a Borel construction for the action of $G$. Taking the homotopy colimit therefore gives a model for the homotopy quotient. An equivalent point of view is that  $\mathcal O^\#$ is the derived left Kan extension of $\mathcal O$ along the stabilization functor $F_{g,n}\to F_{g,n}^{\mathrm{stab}}$; see \cite[Lemma 6.4.1]{giansiracusa-handlebodies}.

\subsection{Cofibrant diagrams and manifolds-with-corners}
\label{subsec: cofibrant diagrams}

Define $G_{g,n}$ to be the slice category $\mathsf{Graphs} \downarrow \ast_{g,n}$ of all semistable graphs of genus $g$ with $n$ legs, and define $G_{g,n}^\mathrm{stab}$ as the subcategory of stable graphs. 

The categories $G_{g,n}^{\mathrm{stab}}$ and its subcategory $F_{g,n}^{\mathrm{stab}}$ are naturally \emph{generalized Reedy categories} \cite{bergermoerdijk}. We will choose as our degree function $d:G_{g,n}^{\mathrm{stab}} \to \N$ the function $$d(\DualGraph) = \sum_{v \in V(\DualGraph)} 3g(v)-3+n(v);$$
the choice is motivated by the fact that this is the complex dimension of the stratum inside $\Mbar_{g,n}$ corresponding to $\DualGraph$. 

Let $\mathcal O$ denote the topological modular operad $\{\M_g^n(\varepsilon)\}$, and let $G=\mathcal O(0,2)=\mathrm{SO}(2)$. We may define a functor $\mathcal O^\#$ from $G_{g,n}^{\mathrm{stab}}$ to spaces, by repeating the construction explained in the preceding subsection:
\[\mathcal O^{\#}(\DualGraph) = \Big(\prod_{v \in V(\DualGraph)} \mathcal O(g(v),n(v)) \Big)\Big/ G^{\mathrm{Edge}(\DualGraph)}.\]
If $n>0$, then $G^{\mathrm{Edge}(\DualGraph)}$ acts freely on $\prod_{v \in V(\DualGraph)} \mathcal O(g(v),n(v))$ for all $\DualGraph \in G_{g,n}^{\mathrm{stab}}$, and we can substitute the homotopy quotient in the definition of $\mathcal O^\#$ with an actual quotient. This is because the space $\M_g^n$, which a priori is an orbifold, is an actual manifold for $n>0$. If $n=0$, then $G^{\mathrm{Edge}(\DualGraph)}$ acts with finite stabilizers, and we will in this case consider the functor $\mathcal O^\#$ as being valued in differentiable or topological stacks.

\begin{lem} \label{lem:cofibrant} Let $n>0$. The functor $\mathcal O^\# : G_{g,n}^{\mathrm{stab}} \to \mathsf{Spaces}$, with notation as in the preceding subsection, is generalized Reedy cofibrant. 
\end{lem}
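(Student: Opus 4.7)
My strategy is to realize each $\mathcal O^\#(\DualGraph)$ as a smooth manifold-with-corners equipped with a smooth action of the finite group $\mathrm{Aut}(\DualGraph)$, and then identify the latching map $L_\DualGraph \mathcal O^\# \to \mathcal O^\#(\DualGraph)$ with the inclusion of the boundary. Reedy cofibrancy in the sense of Berger--Moerdijk then follows from the standard fact that the boundary inclusion of a smooth $\Gamma$-manifold-with-corners (for $\Gamma$ a finite group) is a cofibration in the projective model structure on $\Gamma$-spaces. To begin, the assumption $n > 0$ combined with stability and connectedness of $\DualGraph$ forces $n(v) \geq 1$ at every vertex $v$. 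Thus each factor $\M_{g(v)}^{n(v)}(\varepsilon)$ is a smooth manifold-with-corners by \cref{subsec: thick}, the torus $G^{\mathrm{Edge}(\DualGraph)} = \mathrm{SO}(2)^{\mathrm{Edge}(\DualGraph)}$ acts smoothly and freely on $\prod_v \M_{g(v)}^{n(v)}(\varepsilon)$ by rotating the parametrized boundaries, and the quotient $\mathcal O^\#(\DualGraph)$ is a smooth manifold-with-corners carrying a natural smooth $\mathrm{Aut}(\DualGraph)$-action.

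Next I would describe the corner stratification. A point of $\mathcal O^\#(\DualGraph)$ encodes a tuple of hyperbolic pieces with geodesic boundaries of length $\varepsilon$ glued along the edges of $\DualGraph$, and it lies on the boundary of the manifold-with-corners exactly when some vertex piece carries an additional closed geodesic of length exactly $\varepsilon$. These extra geodesics, together with the edges of $\DualGraph$, assemble into a finer stable graph $\DualGraph'$ with a canonical morphism $\DualGraph' \to \DualGraph$ in $G_{g,n}^{\mathrm{stab}}$ obtained by contracting the new edges. Operadic composition in $\mathcal O$ (gluing along the new edges) induces a natural closed embedding $\mathcal O^\#(\DualGraph') \hookrightarrow \mathcal O^\#(\DualGraph)$ whose image is precisely the closed corner stratum labeled by $\DualGraph'$, and iterated edge contractions correspond to iterated inclusions of strata. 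Working in Bers coordinates adapted to a pants decomposition containing all short geodesics makes this stratification manifest: corner strata are cut out by setting various length coordinates to $\varepsilon$.

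It then remains to identify the latching object $L_\DualGraph \mathcal O^\# = \colim \mathcal O^\#(\DualGraph')$, indexed by non-invertible direct morphisms $\DualGraph' \to \DualGraph$ modulo isomorphism over $\DualGraph$, with the boundary $\partial \mathcal O^\#(\DualGraph)$. The index category is precisely the poset of positive-codimension corner strata, and common refinements of graphs correspond to intersections of strata, so the colimit computed in $\mathrm{Aut}(\DualGraph)$-spaces agrees with the union of all positive-codimension strata, namely $\partial \mathcal O^\#(\DualGraph)$. An $\mathrm{Aut}(\DualGraph)$-equivariant collar neighborhood of $\partial \mathcal O^\#(\DualGraph)$ exists by averaging a non-equivariant collar over the finite group $\mathrm{Aut}(\DualGraph)$, from which one checks that the fixed-point inclusion $(\partial \mathcal O^\#(\DualGraph))^H \hookrightarrow \mathcal O^\#(\DualGraph)^H$ is a cofibration of topological spaces for every $H \leq \mathrm{Aut}(\DualGraph)$, giving the desired equivariant cofibrancy.

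The most delicate step is the identification of the latching colimit with the boundary: one must verify that the combinatorics of non-invertible direct morphisms into $\DualGraph$ (with their automorphisms over $\DualGraph$) matches exactly the combinatorics of corner strata together with their intersections, with no unexpected identifications arising from graph automorphisms. This is a clean statement, but it requires careful bookkeeping of the interaction between edge contractions and the $\mathrm{Aut}(\DualGraph)$-action on refinements; after passing to Bers coordinates it reduces to the essentially tautological assertion that distinct sets of simultaneously short geodesics produce distinct refinements of $\DualGraph$.
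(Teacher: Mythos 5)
Your proof shares the paper's skeleton --- realize $\mathcal O^\#(\DualGraph)$ as a manifold-with-corners with smooth $\mathrm{Aut}(\DualGraph)$-action, identify the latching map with the boundary inclusion --- but diverges at the final equivariant step in a way that matters. You verify cofibrancy by producing an equivariant collar and checking that each fixed-point inclusion $(\partial\mathcal O^\#(\DualGraph))^H\hookrightarrow\mathcal O^\#(\DualGraph)^H$ is a cofibration; that is the appropriate criterion for the \emph{genuine} (fine) model structure on $\mathrm{Aut}(\DualGraph)$-spaces. Generalized Reedy cofibrancy in the sense of Berger--Moerdijk instead requires the latching map to be a cofibration in the \emph{projective} model structure on $\mathrm{Aut}(\DualGraph)$-spaces, whose cofibrations are (retracts of) relative \emph{free} $\mathrm{Aut}(\DualGraph)$-cell complexes. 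Your argument, as written, does not deliver this stronger condition. Note also that the freeness you do establish --- of the torus $\mathrm{SO}(2)^{\mathrm{Edge}(\DualGraph)}$ acting on $\prod_v\M_{g(v)}^{n(v)}(\varepsilon)$, which makes $\mathcal O^\#(\DualGraph)$ a manifold --- is not the same as freeness of the finite group $\mathrm{Aut}(\DualGraph)$ acting on $\mathcal O^\#(\DualGraph)$, which is what is actually needed here.

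The paper supplies exactly this missing ingredient. It observes that the open stratum of the manifold-with-corners $\mathcal O^\#(\ast_{g,n})=\M_g^n(\varepsilon)$ corresponding to $\DualGraph$ is the quotient $\big(\operatorname{Int}\mathcal O^\#(\DualGraph)\big)/\mathrm{Aut}(\DualGraph)$, and deduces that $\mathrm{Aut}(\DualGraph)$ acts \emph{freely} on both $L_\DualGraph\mathcal O^\#$ and $\mathcal O^\#(\DualGraph)$. The latching map is then recovered as a pullback of the ordinary cofibration $L_\DualGraph\mathcal O^\#/\mathrm{Aut}(\DualGraph)\hookrightarrow\mathcal O^\#(\DualGraph)/\mathrm{Aut}(\DualGraph)$ (inclusion of the positive-codimension strata) along the free quotient map, hence a relative free $\mathrm{Aut}(\DualGraph)$-cell complex. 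Once freeness is in hand, your fixed-point criterion and the projective criterion do coincide (only $H=\{e\}$ has nonempty fixed points), so your argument can be completed by inserting that single observation; but without it the argument does not establish cofibrancy in the sense required.
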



\begin{proof}
     Since every non-isomorphism in $G_{g,n}^{\mathrm{stab}}$ raises 
 degree, $(G_{g,n}^{\mathrm{stab}})^-$ contains only the isomorphisms and $(G_{g,n}^{\mathrm{stab}})^+=G_{g,n}^{\mathrm{stab}}$.
    Now for every graph $\DualGraph$, the latching object $L_\DualGraph \Oper^{\#}$ is the colimit
    \begin{equation*}
        L_\DualGraph \Oper^{\#} = \colim_{\DualGraph'\to \DualGraph}{\Oper^{\#}(\DualGraph')}
    \end{equation*}
    with the colimit ranging over all non-invertible morphisms $\DualGraph'\to \DualGraph$.

    We need to prove that for every stable graph $\DualGraph$ the map $L_\DualGraph \Oper^{\#} \to \Oper^{\#}(\DualGraph)$ is an $\operatorname{Aut}(\DualGraph)$-equivariant cofibration.

    This is clearest when $g=0$, in which case $G_{0,n}^{\mathrm{stab}}$ is a poset. In fact, $\mathcal O^\#(\DualGraph)$ is a manifold with corners for all $\DualGraph$ of genus zero, the poset $G_{0,n}^{\mathrm{stab}}$ is the poset of strata of the manifold $\mathcal O^\#(\ast_{0,n})$, and the diagram $\mathcal O^\# : G_{0,n}^{\mathrm{stab}} \to \mathsf{Spaces}$ is given by taking the closures of strata. This is clearly Reedy cofibrant, and the latching object is simply the union of all strata of positive codimension.  

    An analogous statement is true also when $g>0$. However, one must deal with the fact that graphs can have automorphisms (and hence we have a generalized Reedy category). Let $G_{g,n}^\mathrm{stab}/\!\simeq$ be the set of isomorphism classes of the category $G_{g,n}^\mathrm{stab}$. It naturally forms a poset. The space $\mathcal O^\#(\ast_{g,n})$ is a manifold-with-corners, and its poset of strata is $G_{g,n}^\mathrm{stab}/\!\simeq$. The open stratum corresponding to a graph $\DualGraph$ is the quotient
    $$ \big(\operatorname{Int} \mathcal O^\#(\DualGraph)\big) \big/ \operatorname{Aut}(\DualGraph),$$
    where by $\operatorname{Int}(-)$ we mean the interior of a manifold-with-corners. It follows that the finite group $\operatorname{Aut}(\DualGraph)$ acts freely on both $L_\DualGraph \Oper^{\#}$ and $\Oper^{\#}(\DualGraph)$, for all $\DualGraph$, and that the induced map 
    $$ L_\DualGraph \Oper^{\#}/\mathrm{Aut}(\DualGraph) \longrightarrow \Oper^{\#}(\DualGraph)/\mathrm{Aut}(\DualGraph)$$
    is the inclusion of the union of all strata of positive codimension, and in particular a cofibration. Hence $L_\DualGraph \Oper^{\#} \to \Oper^{\#}(\DualGraph)$ is an equivariant cofibration.
    %
    %
%
\end{proof}
When $n=0$ the analogous statement cannot literally be correct, since $\mathcal O^\#(g,0)$ is an orbifold and there is to our knowledge no appropriate model structure on topological or differentiable stacks --- they do not even form a $1$-category. However, it is the case that $\mathcal O^\#(\ast_{g,0}) = \M_g(\varepsilon)$ is an orbifold-with-corners, and its poset of strata is $G_{g,0}^\mathrm{stab}/\!\simeq$. The open stratum corresponding to a graph $\DualGraph$ is the quotient
    $$ \big(\operatorname{Int} \mathcal O^\#(\DualGraph)\big) \big/ \operatorname{Aut}(\DualGraph).$$
    If we now fix a finite cover $X\to \M_g$ by a complex manifold, e.g.\ by putting a level structure on the Riemann surfaces parametrized by $\M_g$, then we may consider the inverse image of $\M_g(\varepsilon)$ inside $X$. Now it becomes a manifold with corners, and the diagram of its strata is cofibrant as before.

\subsection{Hyperbolic model for handlebodies}
Recall that a \emph{short geodesic} on a hyperbolic surface is a closed geodesic $\geodesic$ with $\ell(\geodesic) < \varepsilon$. Let $2g-2+n+m>0$. Recall from the introduction that $\HM_{g,n}^m \subset \M_{g,n}^m$ denotes the open locus parametrizing hyperbolic surfaces with the property that the collection of all short geodesics cut it into pieces of genus zero. We define $\overline{\mathcal H}\M_{g,n}^m$ to be the closure of $\HM_{g,n}^m$ inside $\M_{g,n}^m$. Thus, $\overline{\mathcal H}\M_{g,n}^m$ parametrizes curves which are cut into genus zero pieces by the collection of geodesics with $\ell(\geodesic) \leq \varepsilon$. Note that $\HM_{0,n}^m = \M_{0,n}^m$. Finally, we define 
$$ \mathcal X_{g,n}^m := \overline{\mathcal H}\M_{g,n}^m \cap \M_{g,n}^m(\varepsilon).$$
Explicitly, $\mathcal X_{g,n}^m$ parametrizes hyperbolic surfaces with all closed geodesics having length at least $\varepsilon$, such that those of length exactly $\varepsilon$ cut it into pieces of genus zero.

\begin{prop}\label{prop: two equivalences}
    The maps
    \[ \HM_{g,n}^m \hookrightarrow \overline{\mathcal H}\M_{g,n}^m \hookleftarrow \mathcal X_{g,n}^m\]
    are homotopy equivalences. 
\end{prop}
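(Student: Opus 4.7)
The strategy is to produce two deformation retractions of $\overline{\mathcal H}\M_{g,n}^m$, one onto $\HM_{g,n}^m$ and one onto $\mathcal X_{g,n}^m$, constructed from flows of vector fields built out of length functions of simple closed geodesics on the reference surface.

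For the retraction onto $\HM_{g,n}^m$, the guiding principle is that $\overline{\mathcal H}\M_{g,n}^m$ locally has the structure of a manifold-with-corners (or orbifold-with-corners when $m=0$), with interior $\HM_{g,n}^m$. At any point $\Sigma\in\overline{\mathcal H}\M_{g,n}^m$, take a pants decomposition of the reference surface extending the disjoint collection $\mathcal C_0(\Sigma)$ of geodesics of length $\leq\varepsilon$ on $\Sigma$ (disjointness holds by the collar lemma). Using Wolpert's lemma to control how lengths vary, one sees that in a small enough Bers neighborhood of $\Sigma$ the only geodesics of length $\leq\varepsilon$ are the continuations of those in $\mathcal C_0(\Sigma)$, so $\overline{\mathcal H}\M_{g,n}^m$ near $\Sigma$ is cut out in Fenchel--Nielsen coordinates by the corner inequalities $\ell_\gamma\leq\varepsilon$ for $\gamma\in\mathcal C_0(\Sigma)$, with $\HM_{g,n}^m$ the open interior. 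To globalize, I would assemble a vector field $W=-\sum_{\gamma\in\mathcal C_0(\Sigma)}\varphi(\ell_\gamma)\,\nabla^{\mathrm{WP}}\ell_\gamma$, where $\varphi:[0,\varepsilon]\to\R_{\geq 0}$ is a smooth bump function supported in $[\varepsilon-\delta,\varepsilon]$; disjointness of curves in $\mathcal C_0(\Sigma)$ ensures that their Weil--Petersson gradients interact cleanly, and the vanishing of $\varphi$ outside a neighborhood of $\varepsilon$ ensures continuity across the combinatorial transitions of $\mathcal C_0(\Sigma)$. Flowing $W$ for a uniform time shrinks all near-critical lengths below $\varepsilon-\delta$, mapping $\overline{\mathcal H}\M_{g,n}^m$ into $\HM_{g,n}^m$.

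For the retraction onto $\mathcal X_{g,n}^m$, I would restrict the Ji--Wolpert flow of the preceding section to $\overline{\mathcal H}\M_{g,n}^m$. That flow retracts $\M_{g,n}^m$ onto its thick locus $\M_{g,n}^m(\varepsilon)$, and $\M_{g,n}^m(\varepsilon)\cap\overline{\mathcal H}\M_{g,n}^m=\mathcal X_{g,n}^m$ by definition. The required claim is that the Ji--Wolpert vector field is tangent to $\overline{\mathcal H}\M_{g,n}^m$. Heuristically, the flow only acts to increase the lengths of the shortest geodesics at uniform rate, and vanishes once the shortest length reaches $\varepsilon$; hence the curves in $\mathcal C_0(\Sigma_0)$, transported along the flow, never exceed length $\varepsilon$, and continue to cut the flowed surface into genus zero pieces for every $t$. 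In the limit, all their lengths equal $\varepsilon$ and the surface lies in $\mathcal X_{g,n}^m$.

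The main obstacle is the tangency claim for the Ji--Wolpert flow: one must ensure that lengths of the non-shortest curves in $\mathcal C_0(\Sigma_0)$ also stay $\leq\varepsilon$ throughout the flow, which is not obviously guaranteed by the mere statement of the Ji--Wolpert theorem. If it does not follow directly, a simple workaround is to replace the Ji--Wolpert vector field by the explicit ``uniform grower'' $V=\sum_{\gamma\in\mathcal C_0(\Sigma)}\psi(\ell_\gamma)\,\nabla^{\mathrm{WP}}\ell_\gamma$, with $\psi$ supported in $[0,\varepsilon]$; the Weil--Petersson gradients of disjoint simple closed curves commute, making $V$ well-defined on $\overline{\mathcal H}\M_{g,n}^m\setminus\mathcal X_{g,n}^m$, and its flow monotonically drives all the lengths in $\mathcal C_0(\Sigma)$ up to $\varepsilon$, deformation retracting onto $\mathcal X_{g,n}^m$ manifestly within $\overline{\mathcal H}\M_{g,n}^m$.
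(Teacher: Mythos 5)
Your overall strategy coincides with the paper's: recognize $\overline{\mathcal H}\M_{g,n}^m$ as a manifold-with-corners whose interior is $\HM_{g,n}^m$ (giving the first equivalence), and restrict the Ji--Wolpert flow to get the second. But there is a genuine error in your description of the local structure, and the explicit vector field you build for the first equivalence is both unnecessary and flawed.

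The error: near a point $\Sigma$, the set $\overline{\mathcal H}\M_{g,n}^m$ is \emph{not} cut out by the inequalities $\ell_\gamma \le \varepsilon$ for all $\gamma \in \mathcal C_0(\Sigma)$. The actual defining condition at a nearby $\Sigma'$ is that the subcollection $\{\gamma \in \mathcal C_0(\Sigma) : \ell_\gamma(\Sigma') \le \varepsilon\}$ cuts $\Sigma'$ into genus zero pieces, and since this is an upward-closed condition on subsets, the local model is a \emph{union} of orthants, not a single one. Concretely, on $\Sigma_{1,2}$ one can find disjoint, non-isotopic, non-separating simple closed curves $\gamma_1,\gamma_2$, each of which by itself already cuts $\Sigma_{1,2}$ into a genus zero piece; at a point where $\ell_{\gamma_1}=\ell_{\gamma_2}=\varepsilon$ and nothing else is short, $\overline{\mathcal H}\M_{1,2}$ is locally $\{\ell_{\gamma_1}\le\varepsilon\}\cup\{\ell_{\gamma_2}\le\varepsilon\}$, which strictly contains your orthant. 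Your vector field $W$ also has a continuity problem: you want $\varphi$ to be nonzero at $\varepsilon$ so that $W$ pushes the boundary inward, but the sum defining $W$ changes its set of terms exactly when some $\ell_\gamma$ crosses $\varepsilon$, so for $W$ to be continuous you would need $\varphi(\varepsilon)=0$ --- and then $W$ vanishes on the boundary and the flow does not leave it. The paper sidesteps all of this: once one knows $\overline{\mathcal H}\M_{g,n}^m$ is a (topological) manifold-with-corners with interior $\HM_{g,n}^m$, the inclusion of the interior is a homotopy equivalence by the collar neighborhood theorem, with no explicit vector field needed.

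For the second equivalence your approach matches the paper exactly: restrict the Ji--Wolpert flow. The concern you raise --- whether the flow could push a non-shortest geodesic in $\mathcal C_0(\Sigma_0)$ past length $\varepsilon$ --- is a legitimate point the paper brushes off as ``easy to see''; being alert to it, and proposing a fallback flow, is reasonable, though the fallback flow inherits the same issues about the set of curves in the sum varying discontinuously. Spelling out why the Ji--Wolpert field preserves $\overline{\mathcal H}\M_{g,n}^m$ would be a genuine addition.
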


\begin{proof}It follows from the discussion in Subsection \ref{subsec: thick} that $\overline{\mathcal H}\M_{g,n}^m$ is a manifold-with-corners (orbifold, when $m=0$) and that $\HM_{g,n}^m$ is its interior. Thus, the first claim is clear. 

For the second claim, we use the deformation retraction of $\M_{g,n}^m$ down to $\M_{g,n}^m(\varepsilon)$ constructed by Ji--Wolpert, described in the end of Subsection \ref{subsec: thick}. It is easy to see from the description of the deformation retraction given there that it restricts to a deformation retraction of $\overline{\mathcal H}\M_{g,n}^m$ onto $\mathcal X_{g,n}^m$.\end{proof}

The modular operad structure on the spaces $\{\M_g^m(\varepsilon)\}$ restricts to a modular operad structure on the spaces $\{\mathcal X_g^m\}$, by gluing hyperbolic surfaces along parametrized boundaries. 

\begin{thm}\label{thm: mainthm}
The modular operad given by the spaces $\{\mathcal X_g^m\}$, where $2g-2+m>0$, is equivalent to the handlebody modular operad.
\end{thm}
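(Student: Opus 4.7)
The plan is to realize the modular operad $\{\mathcal X_g^m\}$ as the homotopy colimit appearing on the right-hand side of Giansiracusa's formula from Subsection 3.2, and then invoke \cref{thm: giansiracusa} to identify this homotopy colimit with the handlebody operad.

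Let $\Oper = \{\M_0^n(\varepsilon)\}_{n \geq 3}$ denote the cyclic operad of hyperbolic pants-type surfaces with parametrized boundaries of length $\varepsilon$; this is equivalent to the framed little disks operad. Since every vertex of a graph in $F_{g,m}^{\mathrm{stab}}$ has genus zero, there is a natural map of modular operads
\[\Phi: \hocolim_{\DualGraph \in F_{g,m}^{\mathrm{stab}}} \Oper^\#(\DualGraph) \longrightarrow \mathcal X_g^m,\]
obtained by gluing a collection $(S_v)_{v \in V(\DualGraph)}$ of genus-zero pieces $S_v \in \M_0^{n(v)}(\varepsilon)$ along the edges of $\DualGraph$. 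The $G^{\mathrm{Edge}(\DualGraph)}$ quotient appearing in $\Oper^\#(\DualGraph)$ corresponds precisely to the redundancy that rotating a boundary parametrization on one side of an edge can be absorbed into the twist on the other side, so $\Phi$ is well-defined. Compatibility with morphisms of $F_{g,m}^{\mathrm{stab}}$ amounts to the observation that contracting an edge of $\DualGraph$ corresponds to the limit where an interior $\varepsilon$-geodesic is still of length exactly $\varepsilon$ but ceases to separate the pieces combinatorially.

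I would then argue $\Phi$ is a weak equivalence by showing it induces a homeomorphism on underlying (possibly orbifold) topological spaces. The key observation is that $\mathcal X_g^m$ carries a natural structure of orbifold-with-corners (a manifold-with-corners when $m>0$), where the codimension $k$ strata correspond to surfaces with exactly $k$ interior geodesics of length $\varepsilon$, and the combinatorial type of the stratification is indexed by isomorphism classes in $F_{g,m}^{\mathrm{stab}}$. For each $\DualGraph$, the closed stratum indexed by $\DualGraph$ is naturally identified with $\Oper^\#(\DualGraph)$: cutting along its length-$\varepsilon$ geodesics produces genus-zero pieces, and choosing parametrizations on the cut curves gives a point of $\prod_v \M_0^{n(v)}(\varepsilon)$, modulo the $G^{\mathrm{Edge}(\DualGraph)}$ action of re-choosing these parametrizations. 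The inverse to $\Phi$ is then literally the cutting map.

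To upgrade the colimit to a homotopy colimit, I would invoke \cref{lem:cofibrant} together with the fact that $F_{g,m}^{\mathrm{stab}} \subset G_{g,m}^{\mathrm{stab}}$ is a full Reedy subcategory (the graphs with all genus-zero vertex decorations), so the cofibrancy of $\Oper^\#$ restricts. When $m=0$, I would use a level-structure finite cover as at the end of Subsection 3.3. Combining everything with \cref{thm: giansiracusa} gives
\[\mathcal X_g^m \;\cong\; \colim_{\DualGraph \in F_{g,m}^{\mathrm{stab}}} \Oper^\#(\DualGraph) \;\simeq\; \hocolim_{\DualGraph} \Oper^\#(\DualGraph) \;\simeq\; (\mathbf L \Env\,\Oper)(g,m) \;\simeq\; B\HMod_g^m,\]
and these equivalences are operadic because gluing two surfaces in $\{\mathcal X_g^m\}$ along an $\varepsilon$-length boundary corresponds on the right to graph concatenation along the new edge.

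The main technical hurdle I expect is the identification of the closed stratum of $\mathcal X_g^m$ indexed by $\DualGraph$ with $\Oper^\#(\DualGraph)$. This requires local Fenchel--Nielsen coordinates adapted to pants decompositions extending each $\DualGraph$, and a careful bookkeeping of how the $G^{\mathrm{Edge}(\DualGraph)}$ action compares to the reparametrization freedom at the cut geodesics, including the delicate treatment of $\mathrm{Aut}(\DualGraph)$ (which contributes orbifold structure), plus the usual genus-$1$ arity-$0$ caveat inherited from Giansiracusa's theorem. Given the setup in Subsections 2.2 and 3.3, this should be manageable but is the step demanding the most care.
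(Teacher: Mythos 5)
Your proposal is correct and takes essentially the same route as the paper's proof: Giansiracusa's formula expresses the derived modular envelope as $\hocolim_{\DualGraph \in F_{g,m}^{\mathrm{stab}}} \Oper^\#(\DualGraph)$, \cref{lem:cofibrant} lets one replace the homotopy colimit by the ordinary colimit, and the ordinary colimit is identified with $\mathcal X_g^m$ (the paper phrases this as the image of the counit $\Env\operatorname{Cyc}\Oper\to\Oper$, and your cut-and-paste description of the strata is precisely what ``unwinding definitions'' means there). One small slip to fix: your cyclic operad should include the arity-$2$ component $\Oper(2)=\mathrm{SO}(2)$ rather than only $n\geq 3$, since this group is the $G$ appearing in the definition of $\Oper^\#$.
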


\begin{proof}As in Subsection \ref{subsec: cofibrant diagrams} we let $\Oper$ denote the modular operad of thick moduli spaces $\{\M_g^m(\varepsilon)\}$, with $\Oper(0,2)=\mathrm{SO}(2)$. We denote by $\Oper_c = \operatorname{Cyc}\Oper$ its restriction to a cyclic operad, which is equivalent to the framed little disks operad. Giansiracusa's \cref{thm: giansiracusa} shows that the derived modular envelope $\mathbf L \Env\,\Oper_c$ is equivalent to the handlebody modular operad, except in genus $1$ and arity $0$. 

As explained after \cref{thm: giansiracusa}, $\mathbf L \Env\,\Oper_c (g,n)$ is the homotopy colimit of $\Oper_c^\#$ over the category $F_{g,n}^{\mathrm{stab}}$. But according to \cref{lem:cofibrant}, the functor $\Oper^\#_c$ is a cofibrant diagram of spaces over $G_{g,n}^\mathrm{stab}$, and hence also over its subcategory $F_{g,n}^\mathrm{stab}$, so we can replace the homotopy colimit with an ordinary colimit. When $n=0$ this does not follow from \cref{lem:cofibrant}, but from the discussion immediately following the lemma. In any case, the ordinary colimit is nothing but the modular suboperad of $\Oper$ given by the image of the counit of the adjunction
$$ \Env\operatorname{Cyc}\Oper \to \Oper,$$
and unwinding definitions shows that the image of the counit is nothing but the topological modular operad $\{\mathcal X_g^m\}$. 
\end{proof}

\begin{cor}
The space $\HM_{g,n}^m$ is a classifying space for the handlebody group $\HMod_{g,n}^m$. 
\end{cor}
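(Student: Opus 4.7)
The plan is to combine the operadic equivalence established in \cref{thm: mainthm} with \cref{prop: two equivalences}, and to handle the case $n>0$ via an additional Birman-type step.

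First I would treat the base case $n=0$, $m>0$, which follows directly from what has been proved. Applying \cref{thm: mainthm} componentwise gives $\mathcal X_g^m \simeq B\Diff(V_g^m)$. Since $\Diff(V_g^m)$ has contractible components for $m>0$ --- using Hatcher's theorem together with isotopy extension, as discussed in \cref{subsec: first} --- we have $B\Diff(V_g^m) \simeq B\HMod_g^m$. Combining this with the chain of equivalences in \cref{prop: two equivalences} yields $\HM_g^m \simeq \mathcal X_g^m \simeq B\HMod_g^m$.

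To extend to the case $n>0$, I would argue by induction on $n$ using a Birman-type comparison of fibrations. Geometrically, forgetting the last cusp defines a natural map $\HM_{g,n+1}^m \to \HM_{g,n}^m$, and this should be a Serre fibration whose fiber at a hyperbolic surface $[\Sigma]$ is the universal punctured hyperbolic surface, which is homotopy equivalent to $\Sigma_{g,n}^m \setminus \{*\}$. On the group side, the Birman exact sequence produces a matching fibration $B\HMod_{g,n+1}^m \to B\HMod_{g,n}^m$ whose fiber is aspherical with isomorphic fundamental group. A compatible comparison of the two fibrations --- using the base case to identify the bases and a standard asphericity argument to identify the fibers --- then propagates the equivalence $\HM \simeq B\HMod$ to all $n$.

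For the orbifold case $m=0$, I would invoke the level-structure trick described immediately after \cref{lem:cofibrant}: pass to a finite manifold cover $X \to \M_g$, take preimages so that everything becomes an honest space (respectively manifold-with-corners), run the preceding argument there, and descend back to $\M_g$ to get the orbifold-level equivalence. The main obstacle is Step~2: making precise that the forgetful map on hyperbolic moduli is a fibration with the expected aspherical fiber, and that it matches the group-theoretic Birman sequence. The subtlety is that adding a cusp to a hyperbolic surface changes the hyperbolic metric globally, so the fiber is not literally a sub-moduli space but must be viewed as a family of hyperbolic structures varying with the cusp location; this is precisely the point where one uses uniformization to identify the geometric fiber with the underlying topological (punctured) surface.
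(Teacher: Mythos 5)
Your base case $n=0$ is the same as the paper's and is fine: it follows from \cref{thm: mainthm} and \cref{prop: two equivalences} (and, for $m=0$, the orbifold version is already dealt with inside the proof of \cref{thm: mainthm} via the level-structure argument, so you do not need a separate step for it).

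The inductive step for $n>0$, however, has a genuine gap. You propose to use the Birman exact sequence coming from the forgetful map $\HM_{g,n+1}^m \to \HM_{g,n}^m$, i.e.\ forgetting a cusp. But it is not clear that this map even exists. Forgetting a cusp and re-uniformizing changes the hyperbolic metric globally, so a geodesic that was shorter than $\varepsilon$ on the $(n+1)$-punctured surface need not remain short on the $n$-punctured one, and vice versa. Consequently, the preimage of $\HM_{g,n}^m$ under the forgetful map $\M_{g,n+1}^m\to\M_{g,n}^m$ is a priori a different open subset of $\M_{g,n+1}^m$ than $\HM_{g,n+1}^m$, and there is no reason for the two to coincide. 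You flag this as ``the subtlety'', but it is really the crux, and the uniformization remark you give does not resolve it: uniformization tells you the topological type of the fiber of $\M_{g,n+1}^m\to\M_{g,n}^m$, but says nothing about how the locus of short geodesics interacts with the forgetful map.

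The paper avoids this by using the other Birman-type sequence, the central extension $0 \to \Z^n \to \HMod_g^{n+m} \to \HMod_{g,n}^m \to 1$ by Dehn twists along $n$ boundary curves. Geometrically this corresponds to the $\mathrm{SO}(2)^n$-action on $\M_g^{n+m}$ that \emph{reparametrizes} $n$ of the boundary circles. Crucially, reparametrizing a boundary circle does not change the underlying hyperbolic surface at all, and hence preserves the set of short geodesics; so this action restricts to $\HM_g^{n+m}$, exhibiting it as an $n$-fold circle bundle over $\HM_{g,n}^m$. Comparing the resulting long exact sequence of homotopy groups with the central extension then reduces the $n>0$ case to the $n=0$ case without any of the difficulties above. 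If you want to repair your argument along its original lines, you would have to show that the short-geodesic condition is preserved (up to suitable homotopy) under forgetting cusps, which appears to be substantially harder than switching to the boundary-reparametrization picture.
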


\begin{proof}When $n=0$ this follows from \cref{thm: mainthm} and \cref{prop: two equivalences}. The general case also follows from this. Indeed, we consider firstly the short exact sequence
\begin{equation}
    0 \to \Z^{n} \to \HMod_{g}^{n+m} \to \HMod_{g,n}^m \to 1 \label{eq: seq}
\end{equation} 
    where $\Z^n$ denotes the subgroup generated by Dehn twists around $n$ of the $(n+m)$ boundary components. We also know geometrically that $\M_{g}^{n+m}$ is the total space of an $n$-fold circle bundle over $\M_{g,n}^m$, by identifying $\M_{g,n}^m$ with the quotient of $\M_g^{n+m}$ by the action of $\mathrm{SO}(2)^n$ given by reparametrizing $n$ of the boundaries. By restriction, $\HM_{g}^{n+m}$ becomes an $n$-fold circle bundle over $\HM_{g,n}^m$, and we may consider the induced long exact sequence on homotopy groups. Comparing this long exact sequence with \eqref{eq: seq}, we see that it suffices to show that $\HM_g^{n+m} \simeq B\HMod_g^{n+m}$ to deduce that $\HM_{g,n}^{m} \simeq B\HMod_{g,n}^{m}$.
\end{proof}

\section{The tropicalization map}

In Subsection \ref{subsec: moduli of tropical curves} we made use of the \emph{tropicalization} morphism $\lambda_{g,n} : \M_{g,n}\to\M_{g,n}^\trop$. This map was introduced by Chan--Galatius--Payne \cite[\S 7.1]{changalatiuspayne}, as an analogue of a similar non-archimedean tropicalization map $\M_{g,n}^{\mathrm{Berk}}\to\M_{g,n}^\trop$, see Abramovich--Caporaso--Payne \cite{acp}. The former map is defined on the \emph{complex} analytification of the algebraic stack $\M_{g,n}$, and the latter on the \emph{Berkovich} analytification.

The goal of this section of the paper is to prove that the map $\lambda_{g,n}$ is continuous and proper, and that it induces an isomorphism $H^\bullet_c(\M_{g,n}^\trop,\Q) \stackrel \sim \longrightarrow W_0 H^\bullet_c(\M_{g,n},\Q).$ These properties were stated without proof in \cite[\S 7.1]{changalatiuspayne}.

At this point we also need to make an important disclaimer. Thus far, we have considered the space $\M_{g,n}$ as a \emph{stack} throughout, without much fuss. It is perhaps most naturally an algebraic or analytic stack, in which case it represents a geometrically natural functor \cite{hinich-vaintrob}, but can also be considered by analytification as a \emph{topological} stack, or an orbifold. Similarly, after \cite{tropicalstack} the space $\M_{g,n}^\trop$ is most naturally considered as a stack on a certain category of polyhedral cones, where it represents a moduli functor of tropical curves. By analytification it may be considered as a topological stack, too. Now one certainly expects there to be a well-behaved tropicalization morphism of \emph{topological stacks} $\M_{g,n}\to\M_{g,n}^\trop$ --- however, we do not actually know how to prove this, the basic problem being that we do not even know what functor either $\M_{g,n}$ or $\M_{g,n}^\trop$ represents on the site of topological spaces. 

In this section, we therefore construct the tropicalization map only as a morphism between the \emph{coarse moduli spaces} of the respective topological stack: to do this, it suffices to define set-theoretically a function that assigns to an isomorphism class of hyperbolic surfaces an isomorphism class of tropical curves, and then to verify in local coordinates that the function is continuous. For our intended applications, knowing that the tropicalization morphism exists as a map of coarse spaces is completely sufficient, since the projection to the coarse space induces an isomorphism in rational cohomology (and rational cohomology with compact support). 

In what follows, we write $M_{g,n}$ (resp.\ $M_{g,n}^\trop$) for the coarse spaces of $\M_{g,n}$ and $\M_{g,n}^\trop$. 

In fact, for our arguments, it will be convenient to consider an extension of the tropicalization morphism to the Deligne--Mumford boundary, giving a continuous map $\overline M_{g,n} \to \overline M_{g,n}^\trop$. The space $\Mbar_{g,n}^\trop$ is the compactification of $\M_{g,n}^\trop$ obtained by allowing $\infty$ as an edge length of the metric graphs \cite{cap}, and $\overline M_{g,n}^\trop$ is its coarse space. 

Consider a point $[X] \in \overline{M}_{g,n}$. We consider $X$ as a nodal hyperbolic surface, with a cusp at each puncture and node. We define $\lambda_{g,n}[X]$ to be the following metric graph. The underlying graph $\DualGraph$ is the dual graph of the surface obtained from $X$ by collapsing every short geodesic on $X$ to a point. By the collar lemma, this is well-defined. The metric on $\DualGraph$ is defined as follows: the edges corresponding to nodes of $X$ are given the edge length $\infty$. An edge corresponding to a short geodesic on $X$ of length $\ell$ is given the edge length $-\log(\ell/\epsilon)$. 

\begin{prop}
	The function $\lambda_{g,n} : \overline{M}_{g,n} \to \overline M_{g,n}^\trop$ just defined is continuous.
\end{prop}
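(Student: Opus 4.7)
The plan is to establish continuity locally at each point $[X] \in \overline{M}_{g,n}$ using the Bers atlas introduced in Section 2.1. First, I would choose a pants decomposition of the (normalization of the) underlying surface of $X$ that contains all the short geodesics of $X$ together with all the curves contracted to nodes; such a decomposition exists by the Collar Lemma, since these curves are pairwise disjoint. Denote by $P = \{\gamma_1, \ldots, \gamma_{3g-3+n}\}$ the resulting pants decomposition. This choice yields a Bers chart $\mathbf{C}^{3g-3+n} \to \overline{M}_{g,n}$ covering a neighborhood of $[X]$, on which each $\gamma_i \in P$ has a continuous extended length function $\overline{\ell_{\gamma_i}} : \mathbf{C}^{3g-3+n} \to [0, \infty]$; moreover, since no $\gamma_i \in P$ crosses another curve of $P$, the value $\infty$ does not occur for $\overline{\ell_{\gamma_i}}$ on surfaces parametrized by this chart.

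Next, I would shrink the chart to a neighborhood $U$ of $[X]$ small enough that every short geodesic on every surface in $U$ is isotopic to a curve in $P$. For each simple closed curve $\gamma$ not in $P$ whose isotopy class is defined on $X$ (i.e.\ that does not cross any node of $X$), our choice of $P$ forces $\ell_\gamma[X] > \varepsilon$ \emph{strictly} (since any curve of length $\leq \varepsilon$ on $X$ was included in $P$), so Wolpert's lemma provides a neighborhood on which $\ell_\gamma > \varepsilon$ persists. A curve $\gamma$ that crosses a node of $X$ has extended length tending to $\infty$ at $[X]$, hence remains long nearby. To handle all such $\gamma \notin P$ simultaneously, I would invoke the standard fact that on any compact subset of the thick part of moduli space there are only finitely many simple closed curve isotopy classes with length below a fixed bound (a consequence of Mumford compactness and the Collar Lemma), combined with a diagonal argument.

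Within $U$, the map $\lambda_{g,n}$ then admits an explicit local description. The underlying graph of $\lambda_{g,n}[Y]$ is obtained from the underlying graph of $\lambda_{g,n}[X]$ by deleting those edges $\gamma_i$ for which $\overline{\ell_{\gamma_i}}[Y] > \varepsilon$, and each remaining edge gets assigned the length $-\log(\overline{\ell_{\gamma_i}}[Y]/\varepsilon) \in [0, \infty]$. In other words, $\lambda_{g,n}|_U$ lands in the closure of the cell of $\overline{M}_{g,n}^\trop$ indexed by the graph of $\lambda_{g,n}[X]$, and in the cone coordinates $[0,\infty]^{E(\lambda_{g,n}[X])}$ on this closure it is given by
\[ [Y] \longmapsto \bigl(\max(-\log(\overline{\ell_{\gamma_i}}[Y]/\varepsilon),\, 0)\bigr)_i. \]
Each coordinate is continuous in $[Y] \in U$, because $\overline{\ell_{\gamma_i}}$ is continuous with values in $[0,\infty)$ on $U$, and the function $\max(-\log(\cdot/\varepsilon), 0) : [0,\infty) \to [0,\infty]$ is continuous (with the convention $-\log 0 = \infty$). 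Since the quotient topology on $\overline{M}_{g,n}^\trop$ near $\lambda_{g,n}[X]$ is by definition compatible with this coordinate description on cone faces, continuity at $[X]$ follows, and $[X]$ was arbitrary.

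The main obstacle is the uniform finiteness statement in the second paragraph: Wolpert's lemma treats one simple closed curve at a time, and one must rule out the possibility that some a priori long curve becomes short on surfaces approaching $[X]$. Once that uniformity is secured, the rest of the argument is a routine unwinding of definitions using continuity of the extended length functions from the Bers atlas and the explicit cone-complex structure on $\overline{M}_{g,n}^\trop$.
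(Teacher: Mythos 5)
Your overall strategy matches the paper's: work locally in the Bers atlas, choose a pants decomposition $P$ adapted to $[X]$, and reduce to the claim that a neighborhood of $[X]$ exists on which every short geodesic already lies in $P$, after which continuity of $\lambda_{g,n}$ falls out of the continuity of the extended length functions $\overline{\ell_{\gamma_i}}$ and the formula $\max(-\log(\cdot/\varepsilon),0)$. That final step is exactly the paper's ``manifestly continuous'' conclusion and is fine.

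You correctly flag the real issue --- uniformity over the infinitely many curves $\gamma \notin P$ --- but your proposed resolution (``finitely many isotopy classes below a fixed bound on a compact subset of the thick part, plus a diagonal argument'') is not carried out, and as stated it runs into trouble: near a boundary point of $\overline{M}_{g,n}$ you are not on a compact subset of the thick part, and curves crossing a node of $X$ form an \emph{infinite} collection $\mathfrak{C}$ on which you must control lengths simultaneously. The paper closes this gap with two cleaner ingredients that you should incorporate. First, choose $P$ to contain not merely the short geodesics of $X$ (length $\leq \varepsilon$) but \emph{all} closed geodesics of length $< \log(3+2\sqrt{2})$; the collar lemma guarantees these are still pairwise disjoint and hence extend to a pants decomposition. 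Since $\varepsilon$ was fixed strictly smaller than $\log(3+2\sqrt{2})$, every curve $\gamma \notin P$ now satisfies the \emph{uniform} bound $\ell_\gamma[X] \geq \log(3+2\sqrt{2})$, and a single application of Wolpert's lemma gives a neighborhood of $[X]$ on which $\ell_\gamma > \varepsilon$ for all such $\gamma$ at once --- no case-by-case shrinking needed. Second, for the infinite set $\mathfrak{C}$ of curves passing through a node, one shows $\bigcap_{\gamma\in\mathfrak{C}}\{[Y] : \ell_\gamma[Y] > M\}$ is open by combining Wolpert's lemma with the \emph{discreteness of the length spectrum} of a hyperbolic surface, which ensures this intersection is locally cut out by only finitely many of the conditions. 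Replacing your diagonal argument with these two observations would complete the proof.
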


\begin{proof}
	It suffices to check this locally in the coordinates provided by the Bers atlas, as the projection $\Mbar_{g,n}\to\overline M_{g,n}$ is locally a quotient map. So pick a point $[X] \in \overline M_{g,n}$. To define a Bers chart around $[X]$ we fix a surjection $S \to X$ from the chosen reference surface of genus $g$ with $n$ punctures. Take a pants decomposition of $S$ which contains all the curves that are contracted to nodes in $X$, and in addition all curves whose image in $S$ is a closed geodesic shorter than $\log(3+2\sqrt 2)$. Indeed, by the collar lemma these curves are disjoint and therefore extend to a pants decomposition.
	
	We claim now that there is an open neighborhood of $[X]$ such that for any hyperbolic metric defined by a point in this neighborhood, any short geodesic (or curve contracted to a node) is included in the fixed pants decomposition of $S$. The conclusion will follow from this claim, since on this neighborhood, with respect to the local coordinates with respect to the Bers atlas of our chosen pants decomposition, the map to $\overline M_{g,n}^\trop$ is manifestly continuous.
	
	If $[X]$ lies in the interior of $M_{g,n}$, then the claim is an immediate consequence of Wolpert's lemma, and that we chose $\varepsilon$ to be \emph{strictly} smaller than $\log(3+2\sqrt 2)$. If $[X]$ lies on the Deligne--Mumford boundary, then we may consider its normalization to obtain a point in a product of smaller Teichm\"uller spaces, to which Wolpert's lemma can be applied. But we also need to consider the set of curves $\mathfrak C = \{\geodesic \subset S : \ell_\geodesic[X] = \infty\}$, i.e.\ those curves on $S$ whose image on $X$ passes through a node. We claim that for any $M \in \R$, the set $\{[Y] \in \C^{3g-3+n} : \ell_\geodesic[Y]>M \text{ for all } \geodesic \in \mathfrak C\}$ is an open neighborhood of $X$, which will finish the proof. Indeed, Wolpert's lemma and the discreteness of the length spectrum of a Riemann surface \cite[Lemma 12.4]{farbmargalit} implies that the infinite intersection $\bigcap_{\geodesic \in \mathfrak C} \{[Y]: \ell_\geodesic[Y]>M\}$ is locally defined by finitely many hypersurfaces, and therefore is open. 
\end{proof} 

By restriction, we also obtain a continuous function $M_{g,n}\to M_{g,n}^\trop$. We record the following observation:

\begin{cor}
	The map $M_{g,n}\to M_{g,n}^\trop$ is proper.
\end{cor}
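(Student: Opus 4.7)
The plan is to deduce properness of the restricted map $M_{g,n}\to M_{g,n}^{\trop}$ directly from the existence of the continuous extension $\lambda_{g,n}:\overline{M}_{g,n}\to \overline{M}_{g,n}^{\trop}$ constructed in the preceding proposition, exploiting compactness of the Deligne--Mumford coarse space.

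First I would observe that $M_{g,n}\subset \overline{M}_{g,n}$ is exactly the preimage of $M_{g,n}^{\trop}\subset \overline{M}_{g,n}^{\trop}$ under the extended tropicalization. Indeed, a point $[X]\in\overline{M}_{g,n}$ lies on the Deligne--Mumford boundary iff the associated nodal hyperbolic surface has at least one node, iff the associated metric graph $\lambda_{g,n}[X]$ has at least one edge of length $\infty$, iff $\lambda_{g,n}[X]\in \overline{M}_{g,n}^{\trop}\setminus M_{g,n}^{\trop}$. So the diagram
\[
\begin{tikzcd}
M_{g,n}\arrow[r, hook]\arrow[d] & \overline{M}_{g,n}\arrow[d, "\lambda_{g,n}"]\\
M_{g,n}^{\trop}\arrow[r, hook] & \overline{M}_{g,n}^{\trop}
\end{tikzcd}
\]
is a pullback square (in topological spaces), with horizontal maps open embeddings.

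Next I would note that the extended map is proper for a cheap reason: $\overline{M}_{g,n}$ is compact (it is the coarse space of the Deligne--Mumford compactification), while $\overline{M}_{g,n}^{\trop}$ is Hausdorff. Any continuous map from a compact space to a Hausdorff space is automatically closed and has compact fibers, hence is proper.

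Finally, properness is stable under base change along the open embedding $M_{g,n}^{\trop}\hookrightarrow \overline{M}_{g,n}^{\trop}$: if $K\subset M_{g,n}^{\trop}$ is compact then it is compact in $\overline{M}_{g,n}^{\trop}$, so $\lambda_{g,n}^{-1}(K)$ is compact in $\overline{M}_{g,n}$, and by the pullback identification this compact set is contained in $M_{g,n}$ and equals the preimage of $K$ under $\lambda_{g,n}|_{M_{g,n}}$. The only subtle point worth double-checking is that $\overline{M}_{g,n}^{\trop}$ is genuinely Hausdorff as a coarse moduli space of the stacky compactification of \cite{cap}; this follows from its description as a glued complex of closed cones with the natural quotient topology. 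No serious obstacle is expected beyond this sanity check.
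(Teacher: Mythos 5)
Your proof is correct and follows essentially the same approach as the paper's, which simply observes that $\lambda_{g,n}$ extends to a continuous map between the compactifications and leaves the standard topological details implicit. You have filled in those details carefully (the pullback identification $M_{g,n}=\lambda_{g,n}^{-1}(M_{g,n}^{\trop})$, compactness of $\overline M_{g,n}$, Hausdorffness of $\overline M_{g,n}^{\trop}$, and base change of properness along the open embedding), all of which are accurate.
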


\begin{proof}
	Indeed, it extends to a continuous map between compactifications. 
\end{proof}

\begin{prop}\label{thm tropicalization} The map $H^\bullet_c(M_{g,n}^\trop,\Q) \to H^\bullet_c(M_{g,n},\Q)$ is injective, and its image is the weight zero part of the mixed Hodge structure on $H^\bullet_c(M_{g,n},\Q)$. 
	
\end{prop}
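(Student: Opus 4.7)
Both sides of the alleged isomorphism admit a natural description in terms of the dual complex $\Delta = \Delta(\partial\Mbar_{g,n})$ of the boundary divisor, and the strategy is to identify them through this common combinatorial model and verify that the tropicalization map realizes the identification.

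First I would extend everything to the Deligne--Mumford compactification: the proper map $\overline\lambda : \overline M_{g,n} \to \overline M_{g,n}^\trop$ constructed above sends $\partial M_{g,n}$ into the ``infinite edge'' locus $\overline M_{g,n}^{\trop,\infty} := \overline M_{g,n}^\trop \setminus M_{g,n}^\trop$, and hence induces a morphism between the long exact sequences of compactly supported cohomology of the two pairs. The space $\overline M_{g,n}^\trop$ is contractible (scale all edge lengths to zero), so $H^\bullet_c(M_{g,n}^\trop,\Q) \cong \widetilde H^{\bullet-1}(\overline M_{g,n}^{\trop,\infty},\Q)$. Tautologically, $\overline M_{g,n}^{\trop,\infty}$ is the geometric realization of $\Delta$, since both are obtained by gluing one simplex per isomorphism class of stable graph along edge-contraction maps.

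On the other side, $\overline M_{g,n}$ is the coarse moduli space of a smooth proper Deligne--Mumford stack, so its rational cohomology is pure of weight $\bullet$ in degree $\bullet$, and the long exact sequence of the pair yields $W_0 H^\bullet_c(M_{g,n},\Q) \cong W_0 \widetilde H^{\bullet-1}(\partial M_{g,n},\Q)$ for $\bullet \geq 1$. By Deligne's theorem on the weight-zero cohomology of a simple normal crossing divisor inside a smooth proper variety \cite{hodge2}, the space $W_0 H^\bullet(\partial M_{g,n},\Q)$ is canonically isomorphic to $H^\bullet(\Delta,\Q)$. Under these two canonical identifications the proposition reduces to the claim that the restriction $\overline\lambda|_{\partial M_{g,n}} : \partial M_{g,n} \to |\Delta|$ realizes the Deligne weight-zero identification.

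This I would verify locally using the Bers atlas. On a Bers chart $\mathbf C^{3g-3+n} \to \overline M_{g,n}$ the boundary divisor is cut out by $\prod z_i = 0$, and the hyperbolic length of the geodesic being pinched along the coordinate $z_i$ is asymptotic to $-\pi^2/\log |z_i|$. Hence, after a fibrewise homeomorphism of the target cone, $\overline\lambda$ restricts on each chart to the standard tropicalization $(z_1,\ldots,z_k) \mapsto (-\log|z_1|,\ldots,-\log|z_k|)$. The induced map on cohomology of the latter is a direct local computation that produces the tautological isomorphism between $H^\bullet$ of the SNC divisor $\prod z_i = 0$ in a polydisk and the cohomology of the simplex indexing its strata. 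Gluing the Bers charts, and noting that the finite orbifold groups attached to boundary strata disappear with $\Q$-coefficients, yields the conclusion.

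The main obstacle is the final step: one must control the comparison between hyperbolic length and Bers coordinates uniformly as one approaches deep boundary strata where several geodesics pinch simultaneously, and verify that the local identifications glue coherently across charts. A minor additional subtlety is that $\partial\Mbar_{g,n}$ is normal crossings only in the stacky sense, so Deligne's construction must be applied in the appropriate generality for Deligne--Mumford stacks, with the simplicial resolution replaced by a symmetric $\Delta$-complex.
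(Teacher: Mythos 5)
Your overall strategy --- replace everything by the pair $(\overline M,\,\partial\overline M)$, identify both boundary pieces with the dual complex $\Delta$ of $\partial\Mbar_{g,n}$, and check the identification in Bers charts --- is a genuinely different route from the paper. The paper instead stratifies both $\overline M_{g,n}$ (by topological type) and $\overline M_{g,n}^\trop$ (by which edges are infinite), applies the functorial compactly-supported spectral sequence of a stratified space from \cite{spectralsequencestratification}, and observes that the map between the two spectral sequences on $E_1$-pages is simply the inclusion of a single row. That device does in one stroke the work that your proposal splits between (i) identifying $H^\bullet_c(M_{g,n}^\trop)$ with $\widetilde H^{\bullet-1}(\Delta)$, (ii) invoking the ``dual complex computes $W_0$'' theorem, and (iii) verifying the tropicalization map realizes that identification; in particular it sidesteps the local-to-global gluing of Bers charts entirely, since the comparison of $E_1$-pages is purely combinatorial.

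There are two concrete gaps to flag. First, the assertion that ``tautologically $\overline M_{g,n}^{\trop,\infty}$ is the geometric realization of $\Delta$, since both are obtained by gluing one simplex per isomorphism class of stable graph'' is incorrect as stated: the cell of $\overline M_{g,n}^\trop$ attached to a $k$-edge graph is a quotient of the cube $[0,\infty]^k$, and its infinity locus is the ``far corner'' $\{\,\max_i x_i=\infty\,\}$, which is the union of $k$ cube facets --- not a $(k-1)$-simplex, and not glued by the same face maps as $\Delta$. The two spaces are homotopy equivalent (each is a model for the link at infinity of the cone $M_{g,n}^\trop$), but this requires an actual retraction argument; the cleanest route is to bypass $\overline M_{g,n}^\trop$ altogether and use that $M_{g,n}^\trop$ is the open cone on $|\Delta_{g,n}|$, so $H^\bullet_c(M_{g,n}^\trop)\cong\widetilde H^{\bullet-1}(|\Delta_{g,n}|)$ by the standard cone computation. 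Second, and more seriously, the step ``the induced map on cohomology \ldots is a direct local computation that produces the tautological isomorphism'' is where all the content sits: you must check that the class the tropicalization map produces in $W_0H^\bullet_c(M_{g,n})$ agrees with Deligne's canonical one, and that the chart-by-chart identifications are coherent under the transition maps of the Bers atlas (which permute the coordinates nontrivially over codimension $\geq 2$ strata). You correctly identify this as the main obstacle, but the proposal as written does not supply the argument; the paper's spectral-sequence functoriality is precisely what eliminates this step.
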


\begin{proof}
	Consider the stratification of $\Mbar_{g,n}$ by topological type, and the induced stratification of $\overline M_{g,n}$. There is a corresponding stratification of $\Mbar_{g,n}^\trop$, where we stratify according to which edges have length $\infty$, which similarly induces a stratification of $\overline M_{g,n}^\trop$. The stratifications are compatible, in the sense that the stratification of $\overline M_{g,n}$ by topological type is the pullback of the stratification of  $\overline M_{g,n}^\trop$ along the map $\lambda_{g,n}$. The open dense strata in the respective stratifications are $M_{g,n}$ and $M_{g,n}^\trop$, respectively. 
	
	By the construction of \cite{spectralsequencestratification} there is in this situation associated two spectral sequences converging to $H^\bullet_c(M_{g,n})$ and $H^\bullet_c(M_{g,n}^\trop)$, respectively. By functoriality of the construction of \cite{spectralsequencestratification} there is a morphism between the two spectral sequences, which on abutments gives the map $H^\bullet_c(M_{g,n}^\trop,\Q) \to H^\bullet_c(M_{g,n},\Q)$. 
	
	When the general construction of \cite{spectralsequencestratification} is applied to a smooth projective variety stratified by a normal crossing divisor, one obtains the Poincar\'e dual of the spectral sequence used by Deligne to define the mixed Hodge structure on a smooth variety in \cite{hodge2}, see \cite[Example 3.5]{spectralsequencestratification}. This applies in particular to the stratification of $\Mbar_{g,n}$ by topological type. On the $E_1$-page we see the compact support cohomology of closures of strata. These closures are products of smaller spaces $\Mbar_{g',n'}$ (possibly up to taking the quotient by an action of a finite group). In particular, the weight zero part is concentrated along a single row of the spectral sequence, given by $H^0$ of the closures of strata, as the strata closures are smooth and proper. From \cite{hodge2}, and/or compatibility of the differentials in the spectral sequence with weights, the spectral sequence degenerates after the $E_1$-differential. 
	
	Considering instead $\Mbar_{g,n}^\trop$, we instead see that the closures of strata are isomorphic to products of smaller spaces $\Mbar_{g',n'}^\trop$, and hence are compact and contractible. In particular, they have the same compact support cohomology as a point. The $E_1$-page of the spectral sequence is now concentrated along a single row. In fact, the resulting chain complex is nothing but the commutative graph complex with loops and weights, since strata are indexed by dual graphs, with adjacencies defined by edge contractions. In any case, the map on $E_1$-pages between the two spectral sequences is simply given by the inclusion of the row which defines the weight zero part of the cohomology. 
 
 Finally, the projection $\Mbar_{g,n}\to\overline M_{g,n}$ induces an isomorphism between the spectral sequences of the respective stratifications, and similarly for $\Mbar_{g,n}^\trop \to\overline M_{g,n}^\trop $. The result follows. \end{proof}

\begin{rem}The proof of the preceding proposition was somewhat dishonest: it is not correct that the closures of strata inside $\overline M_{g,n}$ are products of smaller moduli spaces, as stated in the proof. The problem is that a gluing map, say $\overline{M}_{g,n+1} \times \overline M_{g',n'+1} \to \overline M_{g+g',n+n'}$, is typically injective on the interior of the moduli space, but many-to-one on the boundary. What is described in the proof is not the actual closures of strata (which would be a correct description if we were on a normal crossing divisor on an actual smooth variety) but rather the components of the normalization of the locus of boundary points of multiplicity $\geq k$, for some $k$. Fortunately, the latter is what actually appears in Deligne's spectral sequence. The fact that the constructions involving Deligne's spectral sequence, and the spectral sequence of \cite{spectralsequencestratification}, work in the orbifold setting, relies on the fact that both constructions are local and sheaf-theoretic in nature and can be carried out on an \'etale neighborhood of a point which is an actual complex manifold. 
\end{rem}

\begin{rem}
	The family of cohomology groups $\{H^\bullet(\Mbar_{g,n},\Q)\}$ form the \emph{cohomological field theory cooperad} $\mathsf{CohFT}$. (The cyclic part of this modular cooperad is often called the \emph{hyper\-commutative} cooperad.) The degree zero part, $H^0(\Mbar_{g,n},\Q) \hookrightarrow H^\bullet(\Mbar_{g,n},\Q)$, forms a sub-cooperad, which we denote $\mathsf{TFT}$. Thus $\mathsf{TFT}$ is the (underived) modular envelope of the commutative cooperad; it is given by the trivial representation of $\mathbb S_n$ concentrated in degree $0$ in each arity and genus. 
	
	The Feynman transform of $\mathsf{CohFT}$ is equivalent to the gravity operad, as follows by combining \cite[Proposition 6.11]{getzlerkapranov} and the formality of the operad $\{\Mbar_{g,n}\}$, which in turn follows from \cite{dgms,formaloperads}, see also \cite{cirici-horel}. Explicitly, by the gravity operad we mean here the $\mathfrak K$-twisted dg modular operad given by compactly supported cochains $\{C_c^\bullet(\M_{g,n})\}$, with operadic structure maps given by the connecting homomorphisms arising from the embedding of $\M_{g,n+1} \times \M_{g',n'+1}$ as a codimension $1$ boundary stratum in the compactification of $\M_{g+g',n+n'}$. 
	
	The Feynman transform of $\mathsf{TFT}$ is by definition the commutative graph complex with loops and weights, which as explained in \cite{changalatiuspayne} computes $H^\bullet_c(\M_{g,n}^\trop,\Q)$, and also the weight zero part of $H^\bullet_c(\M_{g,n},\Q)$. 
	
	It follows from the proof of \cref{thm tropicalization} that the geometrically defined homomorphism $\lambda_{g,n}^\ast : H^\bullet_c(M_{g,n}^\trop,\Q)\to H^\bullet_c(M_{g,n},\Q)$ coincides with the map obtained by applying the Feynman transform to the inclusion $\mathsf{TFT} \to \mathsf{CohFT}$. Indeed, the projection $\overline M_{g,n} \to \overline M_{g,n}^\trop$ induces a map on cohomology equivalent to $H^0(\overline M_{g,n},\Q) \hookrightarrow H^\bullet(\overline M_{g,n},\Q)$. Moreover, the identification of the Feynman transform of $\mathsf{CohFT}$ with the gravity operad proceeds precisely via Deligne's spectral sequence, and the identification of the Feynman transform of $\mathsf{TFT}$ with the $\mathfrak K$-twisted  modular operad of compactly supported cochains on $M_{g,n}^\trop$ --- what might be called the ``tropical gravity operad'' --- can be understood similarly. 
\end{rem}

\section{Two Leray--Serre spectral sequences}\label{section Leray Serre}

The goal of this section is to discuss and motivate the conjecture stated in \cref{rem: conjecture}, and to indicate why handlebodies appear naturally when thinking about this conjecture. To simplify the notation and discussion we suppose $g \geq 2$. 

\begin{conj}\label{conjecture}There is a natural isomorphism between the two spectral sequences 
\[\begin{tikzcd}
   E_2^{pq} = H^p_c(\CV_g,R^q \rho_! \Q) \arrow[r,Rightarrow] \arrow[d,dashed,"\cong"] &  H^{p+q}_c(\CV_{g,n},\Q) \arrow[d,"\cong"]\\
   E_2^{pq} = W_0 H^p_c(\M_g,R^q \pi_! \Q)  \arrow[r,Rightarrow] & W_0 H^{p+q}_c(\M_{g,n},\Q)
\end{tikzcd}\]
lifting the isomorphism between abutments, where $\rho : \CV_{g,n} \to \CV_g$ and $\pi:\M_{g,n}\to\M_g$ are the respective forgetful maps. 
\end{conj}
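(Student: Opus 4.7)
My plan is to route both spectral sequences through an intermediate one, namely the compactly supported Leray spectral sequence for the forgetful map $\pi^\trop : \M_{g,n}^\trop \to \M_g^\trop$ of tropical moduli, and then to prove two compatibility statements. The commutative square
\[\begin{tikzcd}
\M_{g,n} \arrow[r, "\lambda_{g,n}"] \arrow[d, "\pi"] & \M_{g,n}^\trop \arrow[d, "\pi^\trop"] \\
\M_g \arrow[r, "\lambda_g"] & \M_g^\trop
\end{tikzcd}\]
(which commutes because tropicalization and forgetting markings are compatible, up to stabilization of bivalent genus-zero vertices) has proper horizontal arrows by Section 4, while $\rho = \pi^\trop|_{\CV_{g,n}}$ is the restriction to the Cartesian open inclusion $\CV \subset \M^\trop$. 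Combining base change along proper maps, extension by zero along the open inclusions, and the naturality of Leray--Serre, I would construct a zigzag of morphisms of spectral sequences
\[ E_2^{pq}(\rho) \xrightarrow{j_!} E_2^{pq}(\pi^\trop) \xrightarrow{\lambda_g^{\ast}} E_2^{pq}(\pi), \]
whose composition on abutments recovers the Chan--Galatius--Payne isomorphism $H^\bullet_c(\CV_{g,n}, \Q) \cong W_0 H^\bullet_c(\M_{g,n}, \Q)$.

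The remaining work splits into two steps. First, I would show that $\lambda_g^{\ast}: E_2^{pq}(\pi^\trop) \to E_2^{pq}(\pi)$ is injective with image precisely $W_0 E_2^{pq}(\pi)$. The plan here is a parametric version of \cref{thm tropicalization}: push forward along the compactified forgetful map $\overline\pi: \Mbar_{g,n} \to \Mbar_g$, and run the stratification argument of Section 4 in families, using that the strata of $\Mbar_{g,n}$ remain smooth and proper over their images in $\Mbar_g$ to ensure that the weight-zero part of (the Poincar\'e dual of) Deligne's spectral sequence stays concentrated in a single row at $E_1$. Second, I would show that $j_!: E_2^{pq}(\rho) \to E_2^{pq}(\pi^\trop)$ is an isomorphism. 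By excision this is equivalent to the vanishing of $H^p_c(\M_g^\trop \setminus \CV_g, R^q\pi^\trop_!\Q|_{\M_g^\trop \setminus \CV_g})$, that is, the compactly supported cohomology of the locus of tropical curves with at least one positive-genus vertex, with coefficients in the constructible sheaf whose stalk at $G$ computes $H^q_c(G)$.

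The main obstacle will be this second step. For the trivial coefficient sheaf, this vanishing is exactly the content of Chan--Galatius--Payne's theorem that $\mathsf{HGC}_0 \hookrightarrow \mathsf{HGC}$ is a quasi-isomorphism, whose proof uses an explicit contracting homotopy on the subcomplex of graphs carrying at least one positive-genus vertex. With the coefficient sheaf $R^q\pi^\trop_!\Q$ recording the topology of the fiber graph $G$, one must verify that CGP's contracting homotopy is compatible with the combinatorial variation of the fiber as one moves in the base. This is a parametric refinement of CGP which does not seem to appear in the literature; the forthcoming work of Bibby--Chan--Gadish--Yun \cite{bibby}, which studies the spectral sequence $E_2^{pq}(\rho)$ directly, likely provides the machinery required. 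I would expect a complete proof of the conjecture to follow once that paper is available; the main theorem of the present paper may also enter geometrically here, by providing, via $\HM_{g,n}$, a manifold model of the classifying space over which the variation of the fiber graph is realized as the combinatorics of a handlebody bundle.
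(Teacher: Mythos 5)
This is Conjecture~\ref{conjecture}, which the paper explicitly does \emph{not} prove; Section~\ref{section Leray Serre} only discusses and motivates it, and the best-known case is $g=2$. So there is no ``paper's own proof'' to compare against, only the paper's proposed candidate map and evidence. With that caveat, your proposal takes a genuinely different route, and that route has a serious gap at its very first step.

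Your strategy hinges on a commutative square relating $\lambda_g\circ\pi$ to $\pi^\trop\circ\lambda_{g,n}$, so that the two Leray--Serre spectral sequences can be compared through the intermediate one for $\pi^\trop$. But this square does not commute. The tropicalization maps are defined by recording short geodesics in the hyperbolic metric, and hyperbolic geodesic lengths are not preserved under the forgetful map $\M_{g,n}\to\M_g$: forgetting a cusp changes the uniformizing metric, so the set (and lengths) of short geodesics on $X$ and on $\pi(X)$ need not agree. This is precisely the obstruction the paper is pointing at when it says ``On the face of it, there is not even a natural map $H^p_c(\CV_g,R^q\rho_!\Q)\to H^p_c(\M_g,R^q\pi_!\Q)$ that would be a candidate.'' The paper's whole point is that this difficulty is circumvented not by a tropical commutative square but by passing to $\HM_g$ and the handlebody: the candidate map is built as $H^p_c(\CV_g,R^q\rho_!\Q)\to H^p_c(\HM_g,\lambda^\ast R^q\rho_!\Q)\to H^p_c(\HM_g,R^q\pi_!\Q)\to H^p_c(\M_g,R^q\pi_!\Q)$, with the crucial middle arrow coming from the closed embedding $F(\Sigma_g,n)\hookrightarrow F(V_g,n)$ of configuration spaces of the surface into the handlebody. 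You gesture at this model only in your final sentence; it ought to be the backbone of the construction, not an afterthought.

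There is a second, independent problem with your route even if one could repair the square. Your plan needs $\lambda_g^\ast(R^q\pi^\trop_!\Q)$ to be related to $R^q\pi_!\Q$, but the fibers of $\pi^\trop$ (and of $\rho$) are not configuration spaces of points on tropical curves at all --- the paper stresses that the identification of $R^q\rho_!\Q$ with $H^q_c(F(\vee_g S^1,n),\Q)$ is a subtle stack-theoretic fact due to \cite{bibby}, not a fiberwise identification. So base change along $\lambda_g$ does not, in any naive sense, interpolate the coefficient sheaves. This again is exactly what the handlebody local system $H^q_c(F(V_g,n),\Q)$ on $\HM_g$ is invented to mediate. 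Your Step~2 (vanishing of $H^\bullet_c$ of the positive-genus locus with coefficients in $R^q\pi^\trop_!\Q$) is plausible as the analogue of CGP's contractibility statement, but it cannot be cleanly posed until the coefficient sheaf is correctly set up, and the paper's Theorem~\ref{multiplicity thm}, together with the representation-theoretic interlude on Lagrangian quotients, is offered precisely as evidence that the \emph{handlebody} comparison of local systems is the right one at the level of semisimplifications.

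In short: the statement is an open conjecture; the paper's candidate map goes through $\HM_g$ and configuration spaces in handlebodies, whereas your zigzag through $\M_{g,n}^\trop\to\M_g^\trop$ starts from a square that is not commutative, and even setting that aside, mismatches the coefficient sheaves. The salvageable part of your proposal is the last sentence --- route everything through $\HM_{g,n}\to\HM_g$ and the inclusion $\Sigma_g\hookrightarrow V_g$ --- which is what the paper actually does.
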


 The first of these spectral sequences is the subject of forthcoming work of Bibby--Chan--Gadish--Yun \cite{bibby}. They show in particular that the local system $R^q \rho_! \Q$ is  isomorphic to $H^q_c(F(\vee_g S^1,n),\Q)$ (the compactly supported cohomology of the configuration space of $n$ distinct ordered points on a wedge of $g$ circles), with its natural action of $\mathrm{Out}(F_g) \simeq \mathrm{hAut}(\vee_{g} S^1) $, as studied in \cite{gadish,gadish-hainaut}. This is a somewhat subtle fact --- indeed, it requires treating the spaces $\CV_{g,n}$ as topological stacks throughout, and moreover, the fibers of $\rho$ are certainly not configuration spaces of points on the tropical curves. In any case, the local system $R^q \pi_! \Q$ is of quite different nature, being given by $H^q_c(F(\Sigma_g,n),\Q)$, where $\Sigma_g$ is an oriented closed genus $g$ surface, with its natural action of $\Mod_g$. Moreover, to make sense of the lowest weight subspace $W_0 H^p_c(\M_g,R^q \pi_! \Q)$ we must treat $R^q \pi_! \Q$ as an admissible variation of mixed Hodge structure, so that the work of Saito \cite{saitomixedhodge} furnishes a mixed Hodge structure on its cohomology.

On the face of it, there is not even a natural map $H^p_c(\CV_g,R^q \rho_! \Q) \to H^p_c(\M_g,R^q\pi_!\Q)$ that would be a \emph{candidate} for providing the isomorphism in \cref{conjecture} But the space $\HM_g$, and the interpretation in terms of handlebodies, furnishes such a map. Indeed, consider the tropicalization map $\lambda : \HM_g \to \CV_g$, and take the pullback $\lambda^\ast R^q \rho_!\Q$. Identifying $\HM_g$ with the moduli space of handlebodies, we can now identify this local system with $H^q_c(F(V_g,n),\Q)$, equipped with its natural action of $\HMod_g$. We can also restrict $R^q \pi_!\Q$ to $\HM_g$. The closed embedding $F(\Sigma_g,n) \hookrightarrow F(V_g,n)$ defines a map of local systems on $\HM_g$,
$$ \lambda^\ast R^q \rho_!\Q \longrightarrow R^q \pi_!\Q. $$
We can now formulate a more precise form of \cref{conjecture}: the composite $$ H^p_c(\CV_g,R^q\rho_!\Q) \longrightarrow H^p_c(\HM_g,\lambda^\ast R^q \rho_!\Q) \longrightarrow H^p_c(\HM_g,R^q \pi_!\Q) \longrightarrow H^p_c(\M_g,R^q \pi_!\Q)$$
induces an isomorphism $H^p_c(\CV_g,R^q\rho_!\Q) \cong W_0H^p_c(\M_g,R^q \pi_!\Q) $. Note that this composite does indeed define a map of spectral sequences from the compactly supported Leray--Serre spectral sequence for $\rho$, to the compactly supported Leray--Serre spectral sequence for $\pi$.

What gives \cref{conjecture} weight is that the local systems $R^q\rho_!\Q$ and $R^q \pi_!\Q$ are more closely related than one may at first expect; let us try to justify this claim. It is easiest to first analyze the semisimplifications of the two local systems. On the semisimplification of $R^q \rho_! \Q$, the monodromy factors through the surjection $\mathrm{Out}(F_g) \to \mathrm{GL}(g,\Z)$, and up to semisimplification one has $R^q \rho_! \Q \cong \bigoplus_i V_{\mu_i}$, where $V_\mu$ denotes an irreducible algebraic representation of $\mathrm{GL}(g)$ with highest weight vector $\mu = (\mu_1 \geq \ldots \geq \mu_g \geq 0)$. Similarly, the monodromy of the semisimplification of $R^q \pi_!\Q$ factors through $\Mod_g \to \mathrm{Sp}(2g,\Z)$, and up to semisimplification one has $R^q \pi_!\Q \cong \bigoplus_j \mathbb V_{\lambda_j}(-n_j)$; here $\mathbb V_\lambda$ denotes the polarized variation of Hodge structure on $\M_g$ of weight $\vert\lambda\vert$ associated to the representation of highest weight $\lambda = (\lambda_1 \geq \ldots \geq \lambda_g \geq 0)$, and the integer $n_j \geq 0$ indicates a Tate twist. 

The paper \cite{configuration} defines a chain complex $\mathcal{CF}(A,n)$ associated to a cdga $A$ and a positive integer $n$ (this chain complex was denoted $\mathcal{CF}(\Pi_n\!\setminus\!\{\hat{0}\},A)$ in loc. cit.). The complex $\mathcal{CF}(A,n)$ is an explicit direct sum of degree-shifted tensor powers of $A$, with a differential given by the cup-product and internal differential on $A$. When $A$ is a cdga model for $C^\bullet_c(X,\Q)$, then this chain complex computes $H^\bullet_c(F(X,n),\Q)$. If $X$ is compact and formal, then $A$ may be taken to be the cohomology ring $H^\bullet(X,\Q)$. If $X$ is algebraic, and one keeps track of the mixed Hodge structure on $H^\bullet(X,\Q)$, then the construction recovers the mixed Hodge structure on $H^\bullet_c(F(X,n),\Q)$ up to semisimplification.

Let us make a brief purely representation-theoretic interlude. Consider a $2g$-dimensional symplectic vector space $\mathbb V$ and a $g$-dimensional \emph{Lagrangian quotient} $\mathbb V \to L$. Schur--Weyl duality, and Weyl's construction of the irreducible representations of the symplectic group \cite[\S17.3]{fultonharris}, tells us how to decompose the tensor powers of $\mathbb V$ and $L$ into irreducible representations of $\mathrm{Sp}(2g)$ and $\mathrm{GL}(g)$, respectively. One finds that 
$$ L^{\otimes k} \cong \bigoplus_{|\lambda | = k} S^\lambda(L) \otimes \sigma_\lambda,$$
where $S^\lambda(-)$ denotes a Schur functor and $\sigma_\lambda$ a Specht module,
and 
$$ \mathbb V^{\otimes k} \cong \bigoplus_{\substack{|\lambda| \leq k \\ |\lambda| \equiv k \pmod 2}} \mathbb V_\lambda \otimes \beta_{\lambda,k},$$
where $\mathbb V_\lambda$ is the irreducible representation of $\mathrm{Sp}(2g)$ of highest weight $\lambda$, and $\beta_{\lambda,k}$ is a certain module over the Brauer algebra on $k$ strands. When $\vert\lambda\vert=k$, one has $\beta_{\lambda,k} \cong \sigma_\lambda$. Moreover, one sees precisely how these summands behave under the natural projection $\mathbb V^{\otimes k} \to L^{\otimes k}$: each summand $\mathbb V_\lambda$ with $\vert\lambda\vert < k$ is mapped to $0$ (since $L$ was Lagrangian), and when $\vert\lambda\vert=k$ the summand $\mathbb V_\lambda \otimes \sigma_\lambda$ surjects naturally to $S^\lambda(L) \otimes \sigma_\lambda$. 

Returning to the topic at hand, when $X=\Sigma_g$ is a smooth projective curve of genus $g$, one has $$ A_\Sigma := H^\bullet(\Sigma_g,\Q) = \Q(0)[0] \oplus \mathbb V[-1] \oplus \Q(-1)[-2],$$
where $\mathbb V$ denotes the first cohomology group of the curve, which we identify with the defining representation of $\mathrm{Sp}(2g,\Z)$. If we choose a thickening to a handlebody $\Sigma_g \hookrightarrow V_g$, then 
$$ A_V := H^\bullet(V_g,\Q) = \Q[0] \oplus V[-1],$$
where we identify $V$ with a Lagrangian \emph{subspace} of $\mathbb V$. The inclusion $ A_V \to A_\Sigma$ induces a chain map $\mathcal{CF}(A_V,n)\to \mathcal{CF}(A_\Sigma,n)$, which on homology gives rise to the natural map $H^\bullet_c(F(V_g,n),\Q) \to H^\bullet_c(F(\Sigma_g,n),\Q)$ featuring in the conjecture.

Now observe that if we are only interested in the weight zero part of $H^p_c(\M_g,R^q \pi_! \Q)$, then all summands of $R^q \pi_! \Q$ involving a positive Tate twist (i.e.\ $n_j>0$) can be ignored. We claim that discarding terms with $n_j>0$ exactly corresponds to thickening $\Sigma_g$ to a handlebody. Indeed, all summands in $\mathcal{CF}(A_\Sigma,n)$  involving $H^2(\Sigma_g,\Q) \cong \Q(-1)$ will give rise to a strictly positive Tate twist. Moreover, recall that $\mathcal{CF}(A,n)$ was defined as a sum of tensor powers of $A$, so that we are led to decomposing the tensor powers of $A_\Sigma$. Since $\mathbb V^{\otimes k}$ is pure of weight $k$, it follows that when it is decomposed into irreducible representations $\mathbb V_\lambda$, then those $\lambda$ with $\vert\lambda\vert <k$ are precisely the summands with a non-zero Tate twist. From our representation-theoretic interlude --- or rather its linear dual, since we have a Lagrangian subspace rather than a Lagrangian quotient --- it follows that the inclusion $\mathcal{CF}(A_V,n)\to \mathcal{CF}(A_\Sigma,n)$ hits precisely those summands that occur with vanishing Tate twist, and the subspace of $\mathcal{CF}(A_\Sigma,n)$ with no Tate twist can be obtained from $\mathcal{CF}(A_V,n)$ by formally replacing each summand $S^\lambda(V)$ with the irreducible representation $\mathbb V_\lambda$. 

We may summarize the above discussion in the following theorem: 
\begin{thm}\label{multiplicity thm}
For any partition $\lambda$, the multiplicity of the representation $V_\lambda$ of $\mathrm{GL(g)}$ in the semisimplification of $H^q_c(F(\vee_g S^1,n),\Q)$ always coincides with the multiplicity of the representation $\mathbb V_\lambda$ of $\mathrm{Sp}(2g)$ inside $\mathrm{Gr}^W_{\vert\lambda\vert} H^q_c(F(\Sigma_g,n),\Q)$. In fact, the two multiplicity spaces coincide as representations of $\mathbb S_n$.
\end{thm}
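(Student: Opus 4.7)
The plan is to compute both sides via the chain complex $\mathcal{CF}(A,n)$ from \cite{configuration}, applied to the two cdgas $A_\Sigma = H^\bullet(\Sigma_g,\Q) = \Q(0) \oplus \mathbb V[-1] \oplus \Q(-1)[-2]$ and $A_V = H^\bullet(V_g,\Q) = \Q \oplus V[-1]$, where $V \subset \mathbb V$ is the Lagrangian subspace determined by a handlebody thickening. Formality of $\Sigma_g$ as a smooth projective variety gives $H^\bullet_c(F(\Sigma_g,n),\Q) \cong H^\bullet(\mathcal{CF}(A_\Sigma,n))$ as mixed Hodge structures (up to semisimplification), and the analogous statement yields $H^\bullet_c(F(\vee_g S^1,n),\Q) \cong H^\bullet(\mathcal{CF}(A_V,n))$. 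The cdga inclusion $A_V \hookrightarrow A_\Sigma$ then induces a chain map $\mathcal{CF}(A_V,n) \to \mathcal{CF}(A_\Sigma,n)$.

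Next I would decompose the summands of each complex using Schur--Weyl and Brauer duality. A summand of $\mathcal{CF}(A_\Sigma,n)$ has the form $\mathbb V^{\otimes b} \otimes \Q(-c)$, with $a+b+c=n$, of mixed Hodge weight $b+2c$; Weyl's construction decomposes $\mathbb V^{\otimes b} \cong \bigoplus_{|\mu|\leq b,\, |\mu|\equiv b \pmod 2} \mathbb V_\mu \otimes \beta_{\mu,b}$. Restricting to weight $|\lambda|$ and the $\mathbb V_\lambda$-isotypic component forces $c=0$ and $|\mu|=|\lambda|$, leaving the multiplicity space $\sigma_\lambda = \beta_{\lambda,|\lambda|}$. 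Schur--Weyl similarly gives $V^{\otimes b} \cong \bigoplus_{|\mu|=b} V_\mu \otimes \sigma_\mu$, which yields the same multiplicity space for the $V_\lambda$-isotypic summand of $\mathcal{CF}(A_V,n)$. Moreover, the inclusion $V^{\otimes k} \hookrightarrow \mathbb V^{\otimes k}$ induced by a Lagrangian subspace lands precisely in the $|\mu|=k$ part of the Brauer decomposition: all Brauer contractions act trivially on its image since the symplectic form vanishes on $V$. Hence the chain map $\mathcal{CF}(A_V,n) \to \mathcal{CF}(A_\Sigma,n)$ identifies the $V_\lambda$-isotypic part of the source with the $\mathbb V_\lambda$-isotypic part of the weight-$|\lambda|$ subquotient of the target; the identification is automatically $\mathbb S_n$-equivariant, because on both sides $\mathbb S_n$ acts by permutation of tensor factors.

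To conclude, I would verify compatibility of differentials. The differential of $\mathcal{CF}(A_\Sigma,n)$ is assembled from the cup product on $A_\Sigma$, whose only nonzero contribution involving the $\mathbb V$-factors is the intersection pairing $\mathbb V \otimes \mathbb V \to \Q(-1)$; but this strictly raises $c$ (from $0$ to $1$, with $b$ dropping by $2$, preserving weight $b+2c$), so it vanishes on the $c=0$ subquotient contributing to weight $|\lambda|$. The analogous pairing $V \otimes V \to 0$ on $A_V$ is identically zero, so the two matched subcomplexes have literally identical differentials, and the multiplicity-space isomorphism descends to cohomology as required.

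The step I expect to be the main obstacle is the initial identification $H^\bullet_c(F(\vee_g S^1,n),\Q) \cong H^\bullet(\mathcal{CF}(A_V,n))$, since $\vee_g S^1$ is neither compact nor a manifold and the Getzler--Kriz--Totaro-type setup of \cite{configuration} is most naturally formulated for closed formal varieties. One may adapt the construction to a noncompact or compact-with-boundary setting (working with the handlebody $V_g$ in place of $\vee_g S^1$ and invoking Poincar\'e--Lefschetz duality), or import the explicit answer of \cite{gadish-hainaut} and check by hand that it matches the cdga-level computation. Once this identification is in place, the remainder of the argument is standard Schur--Weyl--Brauer representation theory.
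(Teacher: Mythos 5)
Your proposal follows essentially the same route as the paper: identify both $H^\bullet_c(F(\Sigma_g,n),\Q)$ and $H^\bullet_c(F(\vee_g S^1,n),\Q)$ with the cohomology of $\mathcal{CF}(A_\Sigma,n)$ and $\mathcal{CF}(A_V,n)$, decompose tensor powers of $\mathbb V$ and $V$ via Weyl's construction for $\mathrm{Sp}(2g)$ and Schur--Weyl duality for $\mathrm{GL}(g)$, and observe that the map induced by the Lagrangian inclusion $V\hookrightarrow\mathbb V$ (equivalently the cdga map $A_V\to A_\Sigma$) picks out exactly the Tate-twist-free, i.e.\ weight-$\vert\lambda\vert$, isotypic pieces, with $\mathbb S_n$-equivariance coming from permutation of tensor factors. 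One small correction: $\vee_g S^1$ \emph{is} compact (so $H^\bullet_c = H^\bullet$ there); the genuine subtlety you are sensing is that it is not a manifold and that the applicability of the $\mathcal{CF}$-model from \cite{configuration} to this space needs justification, a point the paper treats at the same informal level as you do, summarizing the discussion into the theorem rather than giving a self-contained proof.
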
 Note that applying $\mathrm{Gr}^W_{\vert\lambda\vert}$ ensures that $\mathbb V_\lambda$ occurs without Tate twist, and that the representation is  semisimple. Based on \cref{multiplicity thm} and \cref{conjecture}, it is natural to conjecture moreover the following:
\begin{conj}\label{conjecture 2}For any partition $\lambda = (\lambda_1 \geq \ldots \geq \lambda_g \geq 0)$, one has
    $$ H^\bullet_c(\CV_g,V_\lambda) \cong W_0 H^\bullet_c(\M_g,\mathbb V_\lambda). $$
\end{conj}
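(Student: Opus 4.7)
\textbf{Proof strategy for \cref{conjecture 2}.} The plan is to construct a natural map from the left to the right side, and then prove it is an isomorphism by reducing to \cref{conjecture}. For the construction, I would work through the hyperbolic model $\HM_g \simeq B\HMod_g$, using the tropicalization $\lambda : \HM_g \to \CV_g$ and the open embedding $\iota : \HM_g \hookrightarrow \M_g$ provided by \cref{mainthm}. The surjection $\HMod_g \to \mathrm{Out}(F_g)$ realized geometrically by $\lambda$, together with the Lagrangian inclusion $V = H_1(V_g,\Q) \hookrightarrow H_1(\Sigma_g,\Q) = \mathbb V$ (which is $\HMod_g$-equivariant), induces for each $\lambda$ an inclusion of $\HMod_g$-representations $V_\lambda \hookrightarrow \mathbb V_\lambda$, coming from the fact that $V_\lambda$ is the top Levi quotient of $\mathbb V_\lambda$ restricted to the stabilizer of $V$. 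This yields a composition
$$H^\bullet_c(\CV_g, V_\lambda) \xrightarrow{\lambda^*} H^\bullet_c(\HM_g,\lambda^* V_\lambda) \longrightarrow H^\bullet_c(\HM_g,\iota^* \mathbb V_\lambda) \xrightarrow{\iota_!} H^\bullet_c(\M_g,\mathbb V_\lambda).$$
One then verifies, by extending the Bers-atlas argument in the proof of \cref{thm tropicalization} to local-system coefficients, that the image lies in $W_0$.

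Second, I would assemble these maps across all partitions using Schur--Weyl duality. Summing the left side over partitions of $k$ with $\sigma_\lambda$-multiplicity produces $H^\bullet_c(\CV_g, V^{\otimes k})$, while the right side produces (at least up to semisimplification) the weight-$k$ isotypic component of $W_0 H^\bullet_c(\M_g, \mathbb V^{\otimes k})$. By the $\mathcal{CF}(A,n)$ model recalled in the paragraphs preceding \cref{multiplicity thm}, these are precisely the weight-$k$ pieces of $H^\bullet_c(\CV_g, R^q \rho_!\Q)$ and $W_0 H^\bullet_c(\M_g, R^q \pi_!\Q)$ for appropriate $n$. Hence an isotypic decomposition reduces \cref{conjecture 2} to the $E_2$-page identification provided by \cref{conjecture}, applied for all $n$.

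The main obstacle is therefore \cref{conjecture} itself, and a direct proof of \cref{conjecture 2} is likely at least as hard. A technical subtlety in the reduction above is that $V_\lambda$ and $\mathbb V_\lambda$ are semisimple only as representations of $\mathrm{GL}(g)$ and $\mathrm{Sp}(2g)$, not as local systems; a clean argument has to handle the full variation of (mixed) Hodge structure rather than its semisimplification. The most promising bypass I see is to develop a twisted Chan--Galatius--Payne graph complex: decorate stable graphs computing $W_0 H^\bullet_c(\M_g,\mathbb V_\lambda)$ by tensor factors of $\mathbb V$ glued along edges using the symplectic form, and compute $H^\bullet_c(\CV_g,V_\lambda)$ by an analogous hairy graph complex with $V$-decorations glued along edges by the dual pairing. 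The Lagrangian inclusion $V \hookrightarrow \mathbb V$ induces a map of graph complexes, and the hard analytic step is to prove this is a quasi-isomorphism; in the untwisted case $\lambda = \varnothing$ this is exactly the Chan--Galatius--Payne theorem, so the twisted version would be a genuine generalization of their main result.
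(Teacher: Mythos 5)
This statement is a \emph{conjecture} in the paper, not a theorem: the authors do not supply a proof, they only verify the case $g=2$ (using that $\mathrm{Out}(F_2)\to\mathrm{GL}(2,\Z)$ is an isomorphism and that the Ceresa cycle vanishes in genus $2$) and explain why \cref{multiplicity thm} makes it plausible. So your proposal should be judged as a strategy for an open problem, not against an extant proof. With that caveat, the outline is reasonable and you have located the genuine difficulty, but a few points need sharpening. First, a direction slip: the inclusion $\Sigma_g\hookrightarrow V_g$ gives a \emph{surjection} $H_1(\Sigma_g,\Q)\twoheadrightarrow H_1(V_g,\Q)$, not an inclusion, so $V$ is a Lagrangian quotient of $\mathbb V$ in homology and a Lagrangian subspace in cohomology; the paper (and the representation-theoretic interlude it contains) is explicit about this and you should match your construction of $V_\lambda\to\mathbb V_\lambda$ to whichever convention you use. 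Second, the logical direction of your reduction is backwards relative to the paper: the paper deduces the \emph{semisimplified} form of \cref{conjecture} from \cref{conjecture 2} together with \cref{multiplicity thm}, and observes that upgrading to the full \cref{conjecture} requires matching nontrivial extensions of $\mathrm{GL}(g)$-representations inside $R^q\rho_!\Q$ with extensions of polarized VHS inside $R^q\pi_!\Q$. Your plan to \emph{deduce} \cref{conjecture 2} from \cref{conjecture} would require isolating $V_\lambda$-isotypic pieces of $R^q\rho_!\Q$ as subquotient local systems, which is exactly what semisimplicity failure obstructs. You flag this as a ``technical subtlety'', but the paper's own closing paragraph makes clear it is the crux of the whole problem, detected by whether Torelli subgroups act nontrivially on $H^q_c$ of configuration spaces; so this cannot be waved away.

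Where your proposal adds genuine content is the final paragraph: the idea of a $V_\lambda$- (resp.\ $\mathbb V_\lambda$-) decorated graph complex, with edges glued by the pairing and a map induced by the Lagrangian inclusion, is a plausible route to a direct proof that does not pass through \cref{conjecture}, and is closer to how one might actually attack the problem. It would indeed amount to a twisted generalization of Chan--Galatius--Payne (whose $\lambda=\varnothing$ case is their theorem), and proving the resulting map of complexes is a quasi-isomorphism is an open and probably hard problem. The paper does not contain such a construction, so if you can make it precise this would be new.
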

Here $\mathbb V_\lambda$ is the $\Q$-PVHS of weight $\vert\lambda\vert$ whose underlying local system is given by the irreducible representation of highest weight $\lambda$, whereas $V_\lambda$ is the irreducible representation of $\mathrm{GL}(g)$ of highest weight $\lambda$. 

\cref{conjecture 2} is known when $g=2$. One can deduce this from the $g=2$ case of \cref{multiplicity thm}, given that both spectral sequences in \cref{conjecture} degenerate immediately and that the sheaves $R^q \rho_!\Q$ and $R^q \pi_!\Q$ are semisimple  when $g=2$. For the map $\rho$, semisimplicity and degeneration follows because the Torelli subgroup of $\mathrm{Out}(F_2)$ is trivial, i.e.~$\mathrm{Out}(F_2)\to\mathrm{GL}(2,\Z)$ is an isomorphism. For the map $\pi$, semisimplicity and degeneration is more subtle, and can be deduced from the vanishing of the Ceresa cycle. This will be explained in forthcoming work of the second author with Orsola Tommasi. However, one can be more explicit than this: for any $a \geq b \geq 0$, one has
$$ \dim H^3_c(\CV_2,V_{a,b}) = \dim W_0 H^3_c(\M_2,\mathbb V_{a,b}) = \begin{cases} \lfloor \tfrac{a-b}{6}\rfloor+1 & a \equiv b \equiv 1 \pmod 2\\ \lfloor \tfrac{a-b}{6}\rfloor & a \equiv b \equiv 0 \pmod 2\\ 0 & a \not\equiv b  \pmod 2\end{cases}, $$
and if $k\neq 3$ then $H^k_c(\CV_2,V_{a,b})=W_0H^k_c(\M_2,\mathbb V_{a,b})=0$. 
 See the discussion in \cite[Section 6.1]{gadish-hainaut}. The second equality can be deduced from the main theorem of \cite{localsystemsa2} and the branching rule of \cite[Proposition 3.4]{branching}.

Assuming \cref{conjecture 2}, it follows from \cref{multiplicity thm} that $$H^p_c(\CV_g,(R^q \rho_! \Q)^{\mathrm{ss}}) \cong W_0H^p_c(\M_g,(R^q\pi_!\Q)^{\mathrm{ss}})$$
for all $g\geq 2$, where $(-)^{\mathrm{ss}}$ denotes semisimplification. In promoting this to an isomorphism without the semisimplifications, i.e.\ $H^p_c(\CV_g,R^q \rho_! \Q) \cong W_0H^p_c(\M_g,R^q\pi_!\Q)$, one is led to comparing the Ext-groups of the various representations $V_\lambda$ of $\mathrm{Out}(F_g)$ with the Ext-groups of the local systems (or variations of Hodge structure) $\mathbb V_\lambda$ on the moduli space $\M_g$. It would be interesting if nontrivial extensions of $\mathrm{GL}(g)$-representations inside $R^q \rho_! \Q$ could be meaningfully ``matched'' with nontrivial extensions of $\mathrm{Sp}(2g)$-representations inside $R^q \pi_!\Q$. Such extensions are detected by elements of the Torelli subgroups of $\mathrm{Out}(F_g)$, resp.\ $\Mod_g$, acting nontrivially on $H^q_c(F(\vee_g S^1,n),\Q)$, resp.\ $H^q_c(F(\Sigma_g,n),\Q)$. The question of (non)triviality of the action of the Torelli group on configuration spaces of points of surfaces has been intensely studied in recent years \cite{moriyama,looijenga-torelli,bianchimillerwilson,bianchi-torelli,stavrou,bianchi-stavrou}; so has the analogous question for $\mathrm{Out}(F_g)$ and configurations of points on wedges of circles, and construction of nontrivial extensions of the representations $V_\lambda$ of $\mathrm{Out}(F_g)$ \cite{turchin-willwacher,vespa,powell-vespa,gadish,gadish-hainaut}.

\bibliographystyle{alpha}
\bibliography{database}

\end{document}